\documentclass[11pt]{article}
\usepackage[utf8]{inputenc}
\usepackage[a4paper, margin=1in]{geometry}
\usepackage{amsmath, amssymb, amsthm, mathtools}
\usepackage{graphicx, caption, subcaption}
\usepackage{enumitem}
\usepackage{hyperref}
\usepackage{color}
\usepackage{float}
\usepackage{xcolor}
\usepackage{times}
\usepackage[numbers,sort&compress]{natbib}

\usepackage{setspace}
\usepackage{changepage}

\renewenvironment{abstract}{%
	\begin{center}%
		\normalfont\fontsize{10pt}{12pt}\selectfont
		\begin{adjustwidth}{1.5pc}{1.5pc} 
		}{%
		\end{adjustwidth}%
	\end{center}%
}

\usepackage{comment}
\title{\textbf{Analysis of Chaos and Bifurcation in Nonlinear two-delay differential equation}}
\author{  Pragati Dutta*, Sachin Bhalekar\\
	\textit{School of Mathematics and Statistics, University of Hyderabad,India}\\
	*corresponding author email id: pragati.dutta2617@gmail.com
	\\sachinbhalekar@uohyd.ac.in}
\date{}

\newtheorem{theorem}{Theorem}[section]

\theoremstyle{remark}
\newtheorem{remark}[theorem]{Remark}

\begin{document}
	\maketitle

	\begin{abstract}
		This paper studies how complicated and irregular behavior, known as chaos, can arise in a simple mathematical model that includes time delays. The model is a delay differential equation in which the present rate of change depends not only on the current state but also on past states at two different delay times. The system is described by
		\begin{equation}
			\dot{x}(t)
			= -\gamma x(t)
			+ g\big(x(t - \tau_1)\big)
			- e^{-\gamma \tau_2}\, g\big(x(t - \tau_1 - \tau_2)\big),
		\end{equation}
		where  $g(x)=k \sin{x}, \; k\in\mathbf{R}$.
		Here, the delays $\tau_1$ and $\tau_2$ represent memory effects in the system, while the sine terms introduce strong nonlinearity. Numerical simulations are used to study the system behavior for different parameter values. Chaotic motion is identified using Lyapunov exponents, Poincar\'e map, power sectrum analysis and phase portraits, which show irregular and unpredictable dynamics. For certain parameter ranges, the system exhibits multi-scroll chaotic attractors, in which the motion alternates among several complex patterns. The switching analysis and the parameter sensitivity analysis are provided to justify the rich dynamics. Finally, chaos is controlled by adding a simple linear feedback term, which suppresses irregular oscillations and stabilizes the system. In addition, synchronization between master and slave systems is investigated using linear state feedback control. The a delay-independent as well as a delay-dependent conditions for synchronization are derived and verified numerically. The results show that even complex delayed systems can be effectively controlled and synchronized using simple feedback techniques.
	\end{abstract}

\section{Introduction}
\label{Intro}
Delay differential equations (DDEs) arise naturally in the mathematical modeling of dynamical systems in which the evolution of the system depends not only on its current state but also on past states \cite{hale1993introduction,diekmann2012delay,lakshmanan2011dynamics}. Such time delays are inherent in many real-world processes due to finite signal transmission times, feedback mechanisms, and processing lags. Compared to ordinary differential equations (ODEs), DDEs provide a more realistic description of many systems and have been successfully applied in engineering, population dynamics, biological systems, neural networks, laser physics, economics and control theory \cite{mackey1977oscillation,erneux2009applied,gopalsamy2013stability,lin2025hopf,krawiec2026lambert}.

Chaos, characterized by aperiodic oscillations and sensitive dependence on initial conditions, has been extensively studied in continuous-time ODEs and discrete dynamical systems \cite{strogatz2001nonlinear}. Classical results show that continuous-time ODEs require a minimum dimension of three to exhibit chaotic behavior. In contrast, DDEs possess an infinite-dimensional phase space due to the presence of time delays, which allows chaotic dynamics to arise even in scalar delay systems \cite{mackey1977oscillation,farmer1982chaotic,bhalekar2012dynamical,berezowski2026influence}. This fundamental difference has motivated substantial research on delay-induced chaos, including bifurcation analysis, multistability, and the emergence of scroll and multi-scroll attractors in systems with single or multiple delays \cite{mackey1977oscillation,farmer1982chaotic,lu2006generating,lakshmanan2011dynamics,bhalekar2025analysis}.

Considerable attention has also been devoted to the problems of chaos control and synchronization in delay differential equations. Various feedback-based control and synchronization strategies, such as delayed feedback control, active control, and linear state feedback, have been proposed to suppress chaotic oscillations and achieve convergence between coupled systems \cite{pyragas1992continuous,babloyantz1995control,mensour1998chaos,niculescu2002delay,michiels2007stability,bhalekar2011new,bhalekar2010synchronization}. Stability and synchronization issues in time-delay and chaotic systems have been investigated using both analytical and numerical approaches\cite{mensour1998synchronization,pyragas1998synchronization,michiels2009synchronization}.

{
Published two-delay nonlinear systems have largely been studied from the viewpoint of local stability and bifurcation. For example, Bélair and Campbell \cite{belair1994stability} considered a single-action mechanism with multiple negative delayed-feedback loops and used characteristic-root and centre-manifold analyses to determine Hopf and higher-codimension bifurcations. Li, Ruan, and Wei \cite{li1999stability} treated a general differential--difference equation in which two delays occur as independent arguments, deriving delay-dependent stability criteria and using normal-form theory to determine the direction and stability of Hopf-bifurcating periodic solutions. These works provide a detailed local description near an equilibrium, but their principal objective is the onset and local character of oscillations rather than the subsequent large-amplitude chaotic regimes, their feedback suppression, or master--slave synchronization.

The closest antecedent is the work of Boullu, Pujo-Menjouet, and Bélair \cite{boullu2020stability}, because it studies the same structural equation as~\eqref{eq:main} in a platelet-production model. In contrast to two independently weighted feedback channels, this equation contains linked delayed arguments, $\tau_1$ and $\tau_1+\tau_2$, while the coefficient of the latter is itself delay dependent through $e^{-\gamma\tau_2}$. Boullu et al. used D-decomposition and centre-manifold reductions to obtain stability regions and to analyze steady-state, Hopf, and double bifurcations, including the occurrence of a torus. Their analysis therefore gives a rigorous account of local bifurcation mechanisms for a general feedback function $g$, but it does not characterize the sinusoidally generated single- and double-scroll chaotic regimes, determine feedback gains for chaos suppression, or formulate a synchronization condition for coupled copies of the equation.

This identifies the research gap addressed here. The present study does not propose a new general theory of two-delay Hopf bifurcation; rather, it complements the above local theories by examining the post-bifurcation dynamics and control properties of the structurally coupled-delay model when $g(x)=k\sin x$. The bounded, nonmonotone sinusoidal feedback permits repeated transitions between periodic and chaotic motion and supports scroll-type attractors, while its global Lipschitz bound can be exploited analytically in the synchronization problem. Thus, the mathematical question is how the specific coupling of the two delays and the coefficient $e^{-\gamma\tau_2}$ affects nonlinear dynamics beyond the first loss of equilibrium stability, and whether this structure admits explicit control and synchronization criteria.

More precisely, the contributions of this work are as follows. First, for the sinusoidal feedback $g(x)=k\sin x$, we systematically investigate the dynamics generated by variation of the time delay $\tau_2$, including repeated periodic--chaotic transitions, embedded periodic windows, and transitions from single-scroll to double-scroll attractors. The observed dynamical regimes are characterized using bifurcation diagrams, time series, phase portraits, largest Lyapunov exponents, Poincar\'e return maps, and power spectral densities, and the robustness of these phenomena is further examined for alternative parameter sets. Second, we investigate the sensitivity of the dynamics to the model parameters $k$, $\gamma$, and $\tau_1$, thereby identifying their effects on the occurrence and extent of periodic and chaotic regimes. Third, for the controlled system, a linear state-feedback controller is introduced and the imaginary-root conditions of the controlled characteristic equation are used to determine the critical feedback gain $\mu_*$ associated with the change of stability of the trivial equilibrium. The effectiveness of the controller in suppressing chaotic oscillations is then verified numerically for representative parameter sets. Fourth, the master--slave synchronization problem is formulated through the corresponding error system. Using the global bound $|\sin u-\sin v|\leq |u-v|$, the error dynamics are reduced to a scalar comparison delay differential equation, from which an explicit delay-independent sufficient condition
\[
\gamma+\delta>2|k|
\]
is obtained, guaranteeing asymptotic synchronization for all $\tau_1,\tau_2\geq0$. In addition, a less conservative synchronization criterion is established for fixed $\tau_2$, providing a sharper sufficient condition when the time delay is prescribed. Finally, the model formulation is discussed from the perspective of its potential applications and scope, highlighting how the delay-dependent feedback architecture can serve as a framework for studying stability, bifurcation, chaos, control, and synchronization in nonlinear systems with multiple delays. Thus, the paper combines analytical stability and synchronization results with numerical characterization, parameter sensitivity, and control of complex dynamics in a two-delay feedback system.

\textit{The remainder of this paper is organized as follows. Section~\ref{sec:main} introduces the two-delay model, explains the roles of the delay-dependent feedback terms, motivates the choice $g(x)=k\sin x$, and discusses the scope and possible applications of the resulting system. Section~\ref{sec:stability} derives the linearized equation and characteristic equation at the trivial equilibrium and establishes delay-independent stability and instability conditions. Section~\ref{sec:switching} treats $\tau_2$ as the switching parameter, derives the critical-delay and root-crossing conditions, and illustrates a stable--unstable--stable transition. Section~\ref{sec:chaos} examines the post-bifurcation dynamics through time series, phase portraits, bifurcation diagrams, largest Lyapunov exponents, Poincar\'e return maps, and power spectral densities, including tests with alternative parameter sets. Section~\ref{sec:sensitivity} studies how variations in $k$, $\gamma$, and $\tau_1$ alter periodic windows and chaotic regimes. Section~\ref{sec:control} develops a linear state-feedback controller, computes critical gains from the controlled characteristic equation, and verifies chaos suppression for two parameter sets. Section~\ref{sec:sync} formulates the master--slave error system, proves both a delay-independent synchronization condition and a less conservative fixed-$\tau_2$ criterion, and validates the results numerically. Section~\ref{sec:conc} summarizes the principal findings and their limitations.}

}

\section{Mathematical Model}
\label{sec:main}
We consider the nonlinear delay differential equation
\begin{equation}
	\dot{x}(t)
	= -\gamma x(t)
	+ g\big(x(t - \tau_1)\big)
	- e^{-\gamma \tau_2}\, g\big(x(t - \tau_1 - \tau_2)\big),
	\label{eq:main}
\end{equation}
This model was originally introduced to describe regulatory feedback mechanisms in biological systems, such as platelet production.
The first delay $\tau_1$ represents the time required for production or maturation to occur.
The second delay $\tau_2$ represents the duration for which individuals remain active in the system before being removed. 
The exponential term $e^{-\gamma\tau_2}$ accounts for natural decay or loss that occurs during this time interval.

Let $x_*$ denote an equilibrium point of system~\eqref{eq:main}. Then $x_*$ satisfies
\[
-\gamma x_* + g(x_*) - e^{-\gamma \tau_2} g(x_*) = 0.
\]
Throughout this work, we assume that the nonlinear function $g$ satisfies $g(0)=0$, which guarantees that $x_*=0$ is always an equilibrium point of system~\eqref{eq:main}. Furthermore, assuming $g'(0)=k$, we linearize the system about the trivial equilibrium $x_*=0$.

{
\subsection*{Motivation and Importance of Sinusoidal feedback}
In this work, we choose
\[
g(x)=k\sin x,
\]
where $k$ is a real feedback gain. This choice is motivated primarily by its mathematical structure and by its established role as a canonical delayed-feedback nonlinearity. Sinusoidal feedback occurs naturally when the state variable represents a phase or a phase-dependent signal, as in optical ring cavities and related optoelectronic feedback systems; scalar DDEs with sinusoidal feedback are known to generate period-doubling, chaotic motion, and high-dimensional dynamics even with a single delay \cite{sprott2007simple}. In the present two-delay equation, the sine law therefore provides a minimal smooth mechanism with which to isolate the effect of the coupled delays without introducing additional shape parameters into $g$.

The function also has several analytical advantages. It is infinitely differentiable, odd, and bounded, with
\[
|g(x)|\leq |k|, \qquad |g'(x)|=|k\cos x|\leq |k|.
\]
Consequently, it supplies the prescribed local gain $g'(0)=k$ while remaining globally Lipschitz. The local gain permits a direct characteristic-equation analysis at the trivial equilibrium, whereas the global estimate
\[
|g(u)-g(v)|\leq |k|\,|u-v|
\]
is the key inequality used later to derive a delay-independent synchronization condition. Moreover, the repeated extrema and sign changes of the sine function create multiple nonlinear feedback branches. This feature is particularly suitable for studying recurrent periodic windows and transitions between single-scroll and double-scroll attractors, while boundedness prevents the delayed forcing itself from growing without limit.

Compared with commonly used alternatives, the choice involves a deliberate trade-off. Hill- and Mackey--Glass-type rational functions have a clearer interpretation for positive concentrations or cell populations and are therefore preferable for quantitative biological calibration, but their monotone or single-humped profiles do not provide the repeated feedback branches of a periodic nonlinearity. Sigmoidal functions such as $\tanh x$ are also smooth, bounded, and globally Lipschitz, but they are monotone and possess only two saturation plateaus. Polynomial feedback is smooth and convenient for local normal-form calculations, but it is generally unbounded and does not provide a global Lipschitz constant. Piecewise-linear feedback can generate multi-scroll dynamics efficiently, but its corners complicate derivative-based bifurcation analysis. The sine function combines smoothness, boundedness, an explicit global derivative bound, and repeated nonlinear branches in a single-parameter law.

We emphasize that $k\sin x$ is not asserted to be a quantitatively calibrated platelet-production function. If system~\eqref{eq:main} is interpreted biologically, $x$ should be regarded as a normalized deviation from a homeostatic level and the sine law as a phenomenological feedback approximation over the relevant operating range; a nonnegative Hill-type response would be more appropriate when the state denotes an absolute cell count over a wide range. The present choice is thus intended to investigate the mathematical capacity of the two-delay architecture to produce and regulate complex dynamics.
}

Substituting this choice into~\eqref{eq:main}, we obtain
\begin{equation}
	\dot{x}(t)
	= -\gamma x(t)
	+ k\sin\big(x(t - \tau_1)\big)
	- k e^{-\gamma \tau_2}\sin\big(x(t - \tau_1 - \tau_2)\big).
	\label{eq:sinx}
\end{equation}

{
\subsection*{Potential Applications and Scope of the Model}

The usefulness of~\eqref{eq:sinx} is not restricted to one physical setting. Its basic structure combines instantaneous dissipation, two linked memory times, attenuation over the second delay interval, and bounded nonlinear feedback. These ingredients occur in several classes of systems, although the interpretation of $x$, the delays, and the feedback function must be adapted to the application.

\textbf{Biological production and maturation systems.}
The most direct interpretation is the platelet-production model introduced in~\cite{boullu2020stability}, from which~\eqref{eq:main} originates. In that setting, $x(t)$ represents a mature population, $\tau_1$ is the production or maturation time, and $\tau_2$ is the time during which mature individuals remain in the circulating population. The factor $e^{-\gamma\tau_2}$ is the survival fraction over this interval. The stability and bifurcation results can therefore help identify combinations of maturation time, survival time, and loss rate associated with steady regulation or sustained oscillations. Similar linked-delay mechanisms arise in other age-structured cell-production models, including erythropoiesis \cite{belair1995age}. Nevertheless, the sinusoidal feedback used here is a qualitative prototype rather than a calibrated hematological response. Application to clinical platelet data would require a nonnegative feedback law, parameter estimation, positivity analysis, and validation against patient measurements. The feedback controller should likewise be interpreted as a mathematical regulation mechanism, not as a treatment prescription.

\textbf{Engineering and optoelectronic feedback systems.}
In an engineering interpretation, $x(t)$ may denote a normalized voltage, phase, optical intensity deviation, or process variable; $\gamma$ represents dissipation; and the delays represent sensing, transmission, processing, transport, or recirculation times. Sinusoidal nonlinearities arise in phase-sensitive components and interferometric feedback loops. Experiments on optoelectronic delayed-feedback devices have demonstrated periodic and high-dimensional chaotic oscillations produced by a nonlinear element and a feedback loop \cite{blakely2004highspeed}. A two-delay extension can represent two propagation paths or the entry and exit of a signal from a recirculating stage. In this context, the bifurcation diagrams indicate operating regions in which a change in delay can switch the device between steady, periodic, and chaotic outputs. The linear controller $-\mu x(t)$ provides a low-complexity damping mechanism when suppression of irregular oscillations is desired, while the chaotic regimes may instead be useful for broadband signal generation, testing, or ranging.

\textbf{Neural and networked systems.}
At a reduced-order level, $x(t)$ can represent a population-averaged neural activity or the phase of an oscillatory neural unit. The two delays may describe a local synaptic-processing time and a longer axonal or recurrent-loop delay. The bounded sine term is especially natural in phase-reduced oscillator descriptions, where the sign of the coupling changes with phase difference. Delayed neural networks are known to exhibit chaotic activity and synchronization, and feedback-control criteria have been developed for recurrent neural networks with time-varying delays \cite{cui2009synchronization}. The present scalar analysis may therefore serve as a building block for studying how two memory scales affect rhythmic activity, desynchronization, or entrainment. However, direct neuroscientific application would require a multidimensional network model incorporating connectivity, heterogeneous delays, noise, and physiologically meaningful activation functions.

\textbf{Chaos-based communication.}
The master--slave construction in Section~\ref{sec:sync} has a natural communication interpretation. The master system can act as a transmitter that generates a broadband chaotic carrier, while the controlled slave acts as a receiver. A low-amplitude message may be embedded by masking or parameter modulation; after synchronization, the receiver can reconstruct the chaotic carrier and separate the message. Synchronization of delay-differential systems has previously been proposed for private communication \cite{mensour1998synchronization}, and chaos-synchronized optical transmitters and receivers have been demonstrated over commercial fibre links \cite{argyris2005chaos}. The presence of two delays can increase waveform complexity and enlarge the parameter space available for system design. 

These examples show how the same mathematical results can have different roles: bifurcation analysis identifies parameter-sensitive transitions, chaos control determines how irregular dynamics may be suppressed, and synchronization provides conditions under which separated units can follow a common trajectory. Experimental or application-specific validation remains an important direction for future work.
}

\section{Stability Analysis}
\label{sec:stability}
The local stability of system~\eqref{eq:sinx} at $x_*=0$ is obtained by retaining the terms that are linear in a small perturbation. Write $x(t)=\varepsilon y(t)$, where $0<\varepsilon\ll1$. Taylor expansion of the delayed nonlinearities gives
\begin{align*}
	\sin\bigl(\varepsilon y(t-\tau_1)\bigr)
	&=\varepsilon y(t-\tau_1)+O(\varepsilon^3),\\
	\sin\bigl(\varepsilon y(t-\tau_1-\tau_2)\bigr)
	&=\varepsilon y(t-\tau_1-\tau_2)+O(\varepsilon^3).
\end{align*}
Substitution into~\eqref{eq:sinx}, division by $\varepsilon$, and passage to the limit $\varepsilon\to0$ yield the variational equation
\begin{equation}
	\dot y(t)
	=-\gamma y(t)
	+k y(t-\tau_1)
	-k e^{-\gamma\tau_2}y(t-\tau_1-\tau_2).
	\label{eq:linearized_origin}
\end{equation}
To derive the characteristic equation, take an exponential mode $y(t)=e^{\lambda t}$. Then
\[
	y(t-\tau_1)=e^{\lambda t}e^{-\lambda\tau_1},
	\qquad
	y(t-\tau_1-\tau_2)=e^{\lambda t}e^{-\lambda(\tau_1+\tau_2)}.
\]
Substituting these expressions into~\eqref{eq:linearized_origin} gives
\[
	\lambda e^{\lambda t}
	=-\gamma e^{\lambda t}
	+k e^{\lambda t}e^{-\lambda\tau_1}
	-k e^{-\gamma\tau_2}e^{\lambda t}e^{-\lambda(\tau_1+\tau_2)}.
\]
Since $e^{\lambda t}\neq0$, division by this common factor produces the characteristic equation 
\begin{equation}
	P(\lambda)
	=\lambda+\gamma
	-k e^{-\lambda\tau_1}
	+k e^{-\gamma\tau_2}e^{-\lambda(\tau_1+\tau_2)}
	=0.
	\label{eq:characteristic_origin}
\end{equation}
\textbf{Definition (Stability) \cite{lakshmanan2011dynamics}:} The equilibrium $x_* = 0$ is asymptotically stable if all roots $\lambda$ of the characteristic equation satisfy $\text{Re}(\lambda) < 0$.\\
In \cite{dutta2025some}, we proposed a stability result for the fractional order version of equation (\ref{eq:main}). It can be restated for the integer order case as below:
\begin{theorem}
	\label{prel}
	The equilibrium point $x_* = 0$ of system~\eqref{eq:sinx} exhibits the following stability properties (see Figure ~\ref{ind}):
	\begin{enumerate}[label=(\alph*)]
		\item \textbf{Delay-independent stability:}
		The equilibrium is asymptotically stable for all delay values
		$\tau_1 \ge 0$ and $\tau_2 \ge 0$ if either
		\begin{enumerate}[label=(\roman*)]
			\item $\gamma > 2k > 0$, or
			\item $\gamma > -2k > 0$.
		\end{enumerate}
		
		\item \textbf{Delay-independent instability:}
		The equilibrium is unstable for all delay values
		$\tau_1 \ge 0$ and $\tau_2 \ge 0$ whenever $\gamma < 0$, that is, when
		$(k,\gamma)$ lies in the third or fourth quadrant of the $k\gamma$-plane.
	\end{enumerate}
\end{theorem}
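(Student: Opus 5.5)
Part (a) follows from a modulus bound on the characteristic function $P(\lambda)$ in \eqref{eq:characteristic_origin}; part (b) from the intermediate value theorem on the real axis.

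\textbf{Part (a).} Both (i) and (ii) say that $\gamma>2|k|>0$ (in particular $\gamma>0$ and $k\neq0$). Suppose, for contradiction, that $\lambda$ is a root with $\operatorname{Re}\lambda\ge 0$.
\begin{itemize}
\item Since $\tau_1,\tau_2\ge0$, we have $|e^{-\lambda\tau_1}|\le1$ and $|e^{-\lambda(\tau_1+\tau_2)}|\le1$.
\item Since $\gamma>0$, we have $e^{-\gamma\tau_2}\le1$.
\item Hence
\[
|\lambda+\gamma|=\bigl|k e^{-\lambda\tau_1}-k e^{-\gamma\tau_2}e^{-\lambda(\tau_1+\tau_2)}\bigr|\le |k|\bigl(1+e^{-\gamma\tau_2}\bigr)\le 2|k|.
\]
\item On the other hand, $\operatorname{Re}\lambda\ge0$ gives $|\lambda+\gamma|\ge \operatorname{Re}\lambda+\gamma\ge\gamma>2|k|$.
\end{itemize}
These two estimates contradict each other. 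So every root satisfies $\operatorname{Re}\lambda<0$, and by the stability definition $x_*=0$ is asymptotically stable. No step depends on the values of $\tau_1,\tau_2\ge 0$, so the conclusion is delay independent.

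\textbf{Part (b).} Here $\gamma<0$. I will show there is a real root $\lambda\ge0$ for every $\tau_1,\tau_2\ge0$, so stability fails.

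Restrict $P$ to real $\lambda$. Then
\[
P(0)=\gamma-k+k e^{-\gamma\tau_2}=\gamma+k\bigl(e^{-\gamma\tau_2}-1\bigr).
\]
Since $\gamma<0$, we have $e^{-\gamma\tau_2}\ge1$, so the second term has the sign of $k$ (or vanishes when $\tau_2=0$). This splits into two cases.

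\emph{Case $k\le0$.} Then $P(0)\le\gamma<0$. As $\lambda\to+\infty$ the exponential terms stay bounded, so $P(\lambda)\sim\lambda\to+\infty$. By the intermediate value theorem there is a root $\lambda>0$.

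\emph{Case $k>0$.} This is the main obstacle, because $P(0)$ may be positive. I would try to show that $\gamma$ itself is a root, i.e.\ to evaluate $P$ at the real point $\lambda=-\gamma>0$:
\[
P(-\gamma)=-k e^{\gamma\tau_1}+k e^{-\gamma\tau_2}e^{\gamma(\tau_1+\tau_2)}=-k e^{\gamma\tau_1}+k e^{\gamma\tau_1}=0.
\]
So $\lambda=-\gamma>0$ is an exact root.

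This identity holds for every $k$ and every $\tau_1,\tau_2\ge0$, with no sign condition on $k$. It therefore handles both cases uniformly and makes the case split above unnecessary. The root $-\gamma$ reflects the structural cancellation built into the model's factor $e^{-\gamma\tau_2}$. A real positive root rules out $\operatorname{Re}\lambda<0$ for all roots, so $x_*=0$ is unstable whenever $\gamma<0$, for all delays. This covers both the third and fourth quadrants of the $k\gamma$-plane.
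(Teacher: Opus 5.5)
Your proof is correct. For part (a) it takes a different route from the paper; for part (b) it uses the same exact root $-\gamma$ but applies it more widely.

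\textbf{Part (a).} The paper argues by continuation in $\tau_2$:
\begin{enumerate}[label=(\arabic*)]
\item At $\tau_2=0$ the delayed terms cancel, so the only root is $\lambda=-\gamma<0$.
\item At a putative crossing root $\lambda=iv$, the real-part equation \eqref{eq:real_int} has no solution when $\gamma>2|k|$.
\item Hence no root can reach the imaginary axis as $\tau_2$ increases, and stability persists.
\end{enumerate}
You instead bound $|\lambda+\gamma|\ge\gamma$ from below and the delayed terms by $|k|(1+e^{-\gamma\tau_2})$ from above on the whole closed right half-plane. This excludes unstable roots directly for each fixed $(\tau_1,\tau_2)$.

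What your route buys:
\begin{enumerate}[label=(\arabic*)]
\item It needs no continuation, so it avoids facts the paper uses without stating them. These are continuous dependence of the roots on $\tau_2$, the absence of roots entering from infinity, and the degenerate base case $\tau_2=0$, where the equation collapses to $\lambda+\gamma=0$ but infinitely many roots appear for $\tau_2>0$.
\item It proves the sharper fixed-$\tau_2$ criterion $\gamma>|k|(1+e^{-\gamma\tau_2})$ at no extra cost. This is the same threshold the paper later uses in Theorem~\ref{thm:fixed_tau2_sync}. Continuation in $\tau_2$ would not give it directly, because $1+e^{-\gamma s}$ is larger for $s<\tau_2$.
\end{enumerate}
What the paper's route buys: its imaginary-root template is the one that carries over to the delay-dependent switching analysis of Section~\ref{sec:switching}, where roots do cross.

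\textbf{Part (b).} The paper splits into two cases: for $k<0$ it applies the intermediate value theorem using $P(0)<0$, and for $k>0$ it uses the exact root $\lambda=-\gamma$. You observe that $P(-\gamma)=0$ for every $k$, $\tau_1$, $\tau_2$. This makes the case split unnecessary, and it also covers $k=0$, which the paper's two cases leave out.

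One small slip: your sentence ``$\gamma$ itself is a root'' should read $-\gamma$. The computation itself is right.
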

{
	\begin{proof}
The local stability of the equilibrium is completely determined by the roots of the characteristic equation \eqref{eq:characteristic_origin}.

\textbf{(a) Delay-independent stability.}

First consider $\tau_2=0,\;\tau_1\ge0$. Then the characteristic equation \eqref{eq:characteristic_origin} reduces to
\[
\lambda=-\gamma.
\]
Hence the equilibrium is asymptotically stable whenever $\gamma>0$.

Now let $\tau_2>0$. Suppose a change in stability occurs. Then there exists a purely imaginary characteristic root $\lambda=iv$, where $v>0$. Separating the real and imaginary parts of \eqref{eq:characteristic_origin} gives
\begin{align}
0&=-\gamma+k\cos(v\tau_1)-k e^{-\gamma\tau_2}\cos\bigl(v(\tau_1+\tau_2)\bigr), \label{eq:real_int}\\
0&=-v-k\sin(v\tau_1)+k e^{-\gamma\tau_2}\sin\bigl(v(\tau_1+\tau_2)\bigr). \label{eq:imag_int}
\end{align}

If $\gamma>2k>0$, then
\[
-\gamma+k\cos(v\tau_1)-k e^{-\gamma\tau_2}\cos(v(\tau_1+\tau_2))
\le -\gamma+k+k e^{-\gamma\tau_2}
<-\gamma+2k<0,
\]
which contradicts \eqref{eq:real_int}.

Similarly, if $\gamma>-2k>0$ ($k<0$),
\[
-\gamma+k\cos(v\tau_1)-k e^{-\gamma\tau_2}\cos(v(\tau_1+\tau_2))
\le -\gamma-k-k e^{-\gamma\tau_2}
<-\gamma-2k<0,
\]
again contradicting \eqref{eq:real_int}.

Therefore, no purely imaginary characteristic root exists for any $\tau_2>0$. Since the equilibrium is stable at $\tau_2=0$, it remains asymptotically stable for all $\tau_1,\tau_2\ge0$.

\medskip
\textbf{(b) Delay-independent instability.}

\textbf{Case (i):} Suppose $\gamma<0$ and $k<0$. At $\tau_2=0$, the characteristic equation becomes
\[
\lambda=-\gamma>0,
\]
so the equilibrium is unstable.

Now let $\tau_2>0$ and define
\[
P(\lambda)=\lambda+\gamma-k e^{-\lambda\tau_1}
+k e^{-\gamma\tau_2}e^{-\lambda(\tau_1+\tau_2)}.
\]
Since
\[
P(0)=\gamma-k+k e^{-\gamma\tau_2}
=\gamma+k\bigl(e^{-\gamma\tau_2}-1\bigr)<0,
\]
and
\[
\lim_{\lambda\to\infty}P(\lambda)=\infty,
\]
the Intermediate Value Theorem guarantees the existence of a positive real root of the characteristic equation. Hence the equilibrium is unstable for all $\tau_1,\tau_2\ge0$.

\textbf{Case (ii):} Suppose $\gamma<0$ and $k>0$. Then
\[
P(-\gamma)=0.
\]
Since $-\gamma>0$, the characteristic equation possesses a positive real root. Therefore, the equilibrium is unstable for all $\tau_1,\tau_2\ge0$ in the fourth quadrant as well.
\end{proof}
}
\begin{figure}[H]
	\centering
	\includegraphics[scale=0.58]{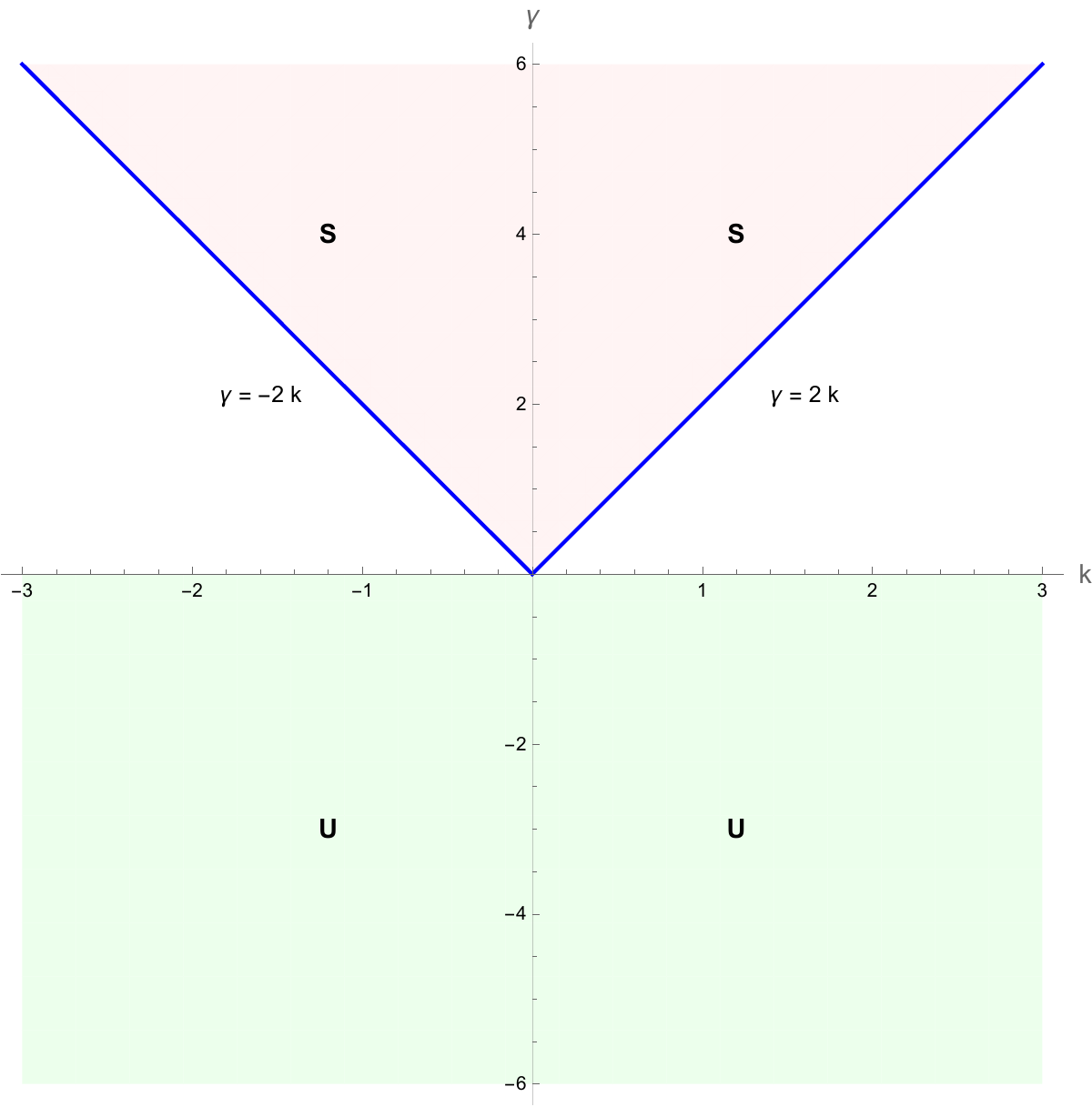}
	\caption{Delay Independent stability regions for equation (\ref{eq:main})}
	\label{ind}
\end{figure}
Since the instability is the necessary condition for chaos, we will set the parameter values outside the stable regions.

{

\section{Stability Switching Analysis}
\label{sec:switching}

In this section, we investigate the existence of stability switching of the trivial equilibrium with respect to the delay parameter $\tau_2$. The analysis is based on the characteristic root approach, which has been extensively used to study stability switching in delay differential equations \cite{grossman1982discrete,beretta2002geometric,bhalekar2016stability,bhalekar2024stability}.

Consider the linearized system~\eqref{eq:linearized_origin}
with $\gamma\neq0$, $\tau_1>0$ and $\tau_2>0$.

The corresponding characteristic equation is given by equation (\ref{eq:characteristic_origin}). To determine the critical values of $\tau_2$, we assume that (\ref{eq:characteristic_origin}) possesses a pair of purely imaginary roots
$
\lambda=iv,\;v>0.
$

Substituting $\lambda=iv$ into~\eqref{eq:characteristic_origin} and separating the real and imaginary parts, we obtain Equations ~\eqref{eq:real_int} and~\eqref{eq:imag_int}.

Squaring and adding ~\eqref{eq:real_int} and~\eqref{eq:imag_int}, we get
\begin{equation}
k^2e^{-2\gamma\tau_2}
=
k^2+v^2+\gamma^2
-2k\gamma\cos(\tau_1v)
+2kv\sin(\tau_1v)
\end{equation}

which gives
\begin{equation}
\tau_2(v)
=
\frac{1}{2\gamma}
\ln
\left(
\frac{k^2}
{k^2+v^2+\gamma^2
-2k\gamma\cos(\tau_1v)
+2kv\sin(\tau_1v)}
\right).
\label{tau2_formula}
\end{equation}

Substituting \eqref{tau2_formula} into the real part (\ref{eq:real_int}), we obtain an equation 

\begin{equation}
F(v)=0,
\label{Fv}
\end{equation}
where
\begin{align}
F(v)
=&\
\gamma-k\cos(\tau_1v)
\nonumber\\
&
+\frac{k}{|k|}
\sqrt{
k^2+v^2+\gamma^2
-2k\gamma\cos(\tau_1v)
+2kv\sin(\tau_1v)}
\nonumber\\
&\times
\cos\left[
\left(
-\tau_1
-
\frac{1}{2\gamma}
\ln
\left(
\frac{k^2}
{k^2+v^2+\gamma^2
-2k\gamma\cos(\tau_1v)
+2kv\sin(\tau_1v)}
\right)
\right)v
\right].
\end{align}

Therefore, for fixed values of $k$, $\gamma$ and $\tau_1$, the determination of all possible critical delays reduces to finding the positive roots of the nonlinear equation
\[
F(v)=0.
\]

Suppose
$
v_1<v_2<\cdots<v_m
$
are the positive solutions of \eqref{Fv}. For each $v_i$, the corresponding critical delay is obtained from \eqref{tau2_formula} as
\[
\tau_{2,i}=\tau_2(v_i).
\]

Only those values satisfying
$
\tau_{2,i}>0
$
are admissible.

To determine the direction in which the characteristic roots cross the
imaginary axis, we examine the sign of
$
\operatorname{Re}\!\left(\frac{d\lambda}{d\tau_2}\right)\big|_{u=0},
$
where $\lambda=u+iv$.

Assuming that $\lambda$ is a differentiable function of $\tau_2$ and
differentiating the characteristic equation i.e (\ref{eq:characteristic_origin}) equation with respect to $\tau_2$, we obtain
\begin{equation}
\frac{d\lambda}{d\tau_2}
+k\tau_1e^{-\tau_1\lambda(\tau_2)}
\frac{d\lambda}{d\tau_2}
+k e^{-\gamma\tau_2-(\tau_1+\tau_2)\lambda(\tau_2)}
\left[
-\gamma
-\lambda(\tau_2)
-(\tau_1+\tau_2)\frac{d\lambda}{d\tau_2}
\right]
=0.
\end{equation}

Hence,
\begin{equation}
\frac{d\lambda}{d\tau_2}
=
\frac{
k e^{\tau_1\lambda(\tau_2)}
\left(\gamma+\lambda(\tau_2)\right)
}{
e^{\tau_2\gamma+(2\tau_1+\tau_2)\lambda(\tau_2)}
-
k\tau_1 e^{\tau_1\lambda(\tau_2)}
+
k\tau_1 e^{\tau_2\gamma+(\tau_1+\tau_2)\lambda(\tau_2)}
-
k\tau_2 e^{\tau_1\lambda(\tau_2)}
}.
\label{der}
\end{equation}

Now, substituting $\lambda=u+iv$ into \eqref{der}, taking the real part,
and then setting $u=0$, we obtain

\begin{align}
Re\left.\left(\frac{d\lambda}{d\tau_2}\right)\right|_{u=0}
&=
\frac{k\,N(\gamma,k,v,\tau_1,\tau_2,v)}
     {D(\gamma,k,v,\tau_1,\tau_2,v)},
\end{align}
where
\begin{align}
N &=
-k\tau_1\gamma
-k\tau_2\gamma
+e^{\tau_2\gamma}k\tau_1\gamma\cos(\tau_2v)
+e^{\tau_2\gamma}\gamma\cos\bigl((\tau_1+\tau_2)v\bigr)
\nonumber\\
&\qquad
+e^{\tau_2\gamma}k\tau_1v\sin(\tau_2v)
+e^{\tau_2\gamma}v\sin\bigl((\tau_1+\tau_2)v\bigr),
\end{align}
and
\begin{align}
D &=
e^{2\tau_2\gamma}
+k^2\tau_1^2
+e^{2\tau_2\gamma}k^2\tau_1^2
+2k^2\tau_1\tau_2
+k^2\tau_2^2
\nonumber\\
&\qquad
+2e^{2\tau_2\gamma}k\tau_1\cos(\tau_1v)
-2e^{\tau_2\gamma}k^2\tau_1(\tau_1+\tau_2)\cos(\tau_2v)
\nonumber\\
&\qquad
-2e^{\tau_2\gamma}k\tau_1
\cos\bigl((\tau_1+\tau_2)v\bigr)
-2e^{\tau_2\gamma}k\tau_2
\cos\bigl((\tau_1+\tau_2)v\bigr).
\end{align}

Since stability can change only when a pair of characteristic roots crosses
the imaginary axis, we evaluate
$
Re\left.\left(\frac{d\lambda}{d\tau_2}\right)\right|_{u=0}
$
at the critical values
$
(v_i,\tau_{2,i})
$
for fixed values of $k$, $\gamma$, and $\tau_1$. The sign of
$
Re\left.\left(\frac{d\lambda}{d\tau_2}\right)\right|_{u=0}
$
determines the crossing direction of the characteristic roots.

If
$
Re\left.\left(\frac{d\lambda}{d\tau_2}\right)\right|_{u=0}>0,
$
then the characteristic roots cross the imaginary axis from the left half-plane
to the right half-plane as $\tau_2$ increases, and the trivial equilibrium
loses its stability.

If
$
Re\left.\left(\frac{d\lambda}{d\tau_2}\right)\right|_{u=0}<0,
$
then the characteristic roots cross from the right half-plane to the left
half-plane, and the trivial equilibrium regains its stability.

Therefore, stability switching occurs whenever the signs of
$
Re\left.\left(\frac{d\lambda}{d\tau_2}\right)\right|_{u=0}
$
at two successive critical delays are opposite, that is,
$
\left(Re\left.\left(\frac{d\lambda}{d\tau_2}\right)\right|_{u=0}\right)_{\tau_{2,i}}
\left(Re\left.\left(\frac{d\lambda}{d\tau_2}\right)\right|_{u=0}\right)_{\tau_{2,i+1}}
<0.
$

Hence, as $\tau_2$ increases, the characteristic roots cross the imaginary
axis alternately, leading to alternating intervals of stability and
instability of the trivial equilibrium. \\

Below, we present an example showing the stability switching of type S-U-S for the parameter values
$
k=-5,\;\gamma=4.3,\;\tau_1=1.
$
For $\tau_2=0$, the trivial equilibrium is stable (see Figure \ref{tau20}).
For the above parameter values, the variation of $\tau_2(v)$ given by
\eqref{tau2_formula} is shown in Figure~\ref{fig:tau2v}. It can be seen that
$\tau_2(v)$ is positive only over two small intervals of $v$. Therefore, only
the positive roots of $F(v)=0$ lying in these intervals give admissible
critical delays. Since no other positive roots satisfy $\tau_2(v)>0$, only
these values are considered in the stability switching analysis.

\begin{figure}[htbp]
\centering
\includegraphics[width=0.65\textwidth]{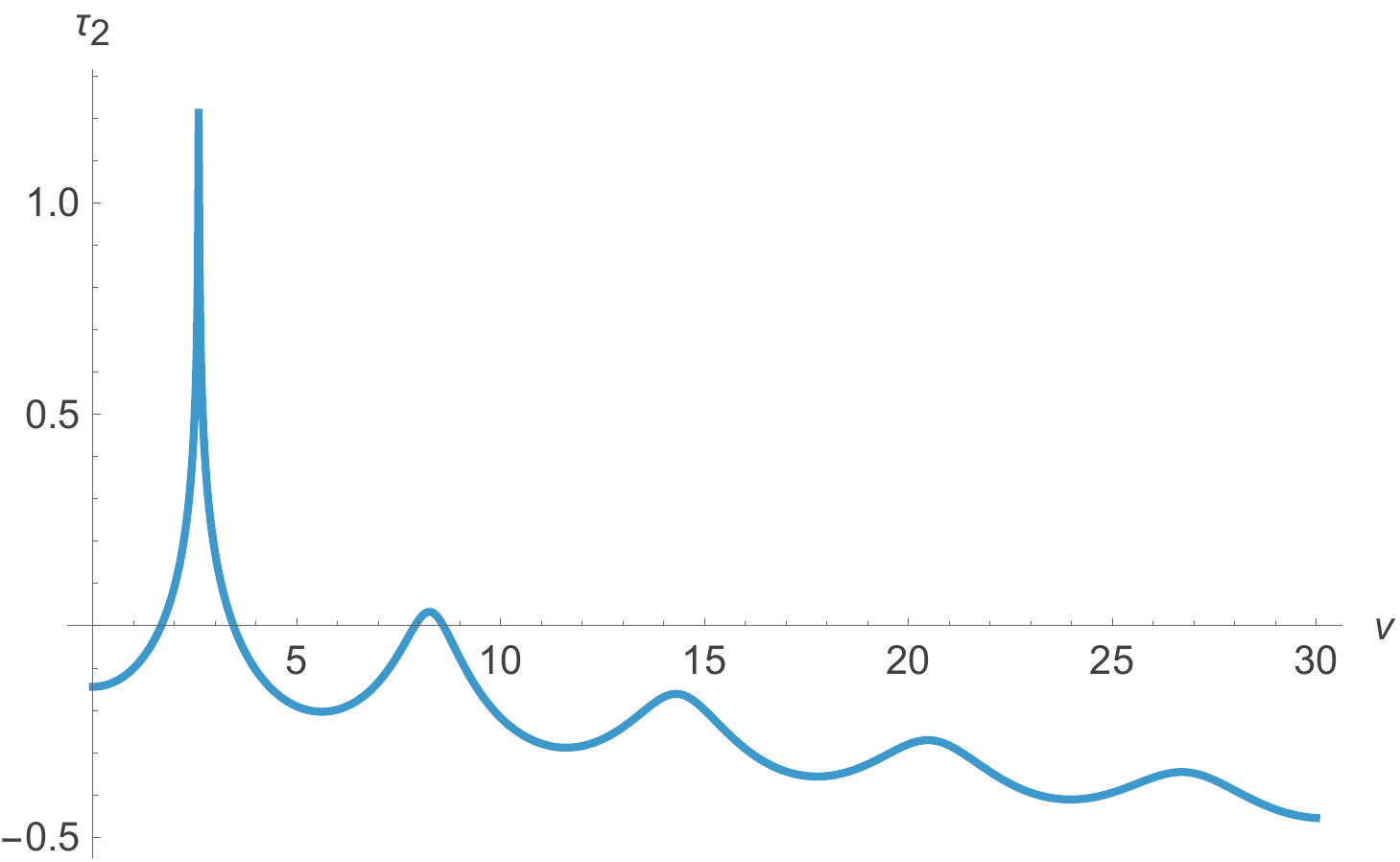}
\caption{Plot of $\tau_2(v)$}
\label{fig:tau2v}
\end{figure}
Substituting these parameter values into equation \eqref{Fv}, we obtain two positive solutions,
$
v_1=2.5977
$
and
$
v_2=2.6510
$, satisfying the criteria.
Using equation \eqref{tau2_formula}, the corresponding critical delay values are found to be
$
\tau_{2a}=0.64274
$
and
$
\tau_{2b}=1.2430.
$

To verify whether these critical delays change the stability of the equilibrium, we compute
$
Re\left.\left(\frac{d\lambda}{d\tau_2}\right)\right|_{u=0}
$
at the corresponding critical delays. We obtain
\[
\left(\operatorname{Re}\left.\left(\frac{d\lambda}{d\tau_2}\right)\right|_{u=0}\right)_{\tau_{2a}} = 0.11539
\quad\text{and}\quad
\left(\operatorname{Re}\left.\left(\frac{d\lambda}{d\tau_2}\right)\right|_{u=0}\right)_{\tau_{2b}} = -0.01728.
\]

Since
$
Re\left.\left(\frac{d\lambda}{d\tau_2}\right)\right|_{u=0}>0
$
at $\tau_{2a}$, the characteristic root crosses the imaginary axis from the left half-plane to the right half-plane, and the equilibrium loses its stability. Similarly,
$
Re\left.\left(\frac{d\lambda}{d\tau_2}\right)\right|_{u=0}<0
$
at $\tau_{2b}$, indicating that the root returns back to the left half-plane and the equilibrium regains its stability. Therefore, as $\tau_2$ increases, the system exhibits a stability switching of type S-U-S.
In Figure (\ref{fig:stability_switch}), we have given time series for different values of $\tau_2$ to support our claim. 
\begin{figure}[htbp]
\centering

\begin{subfigure}{0.32\textwidth}
    \centering
    \includegraphics[width=\linewidth]{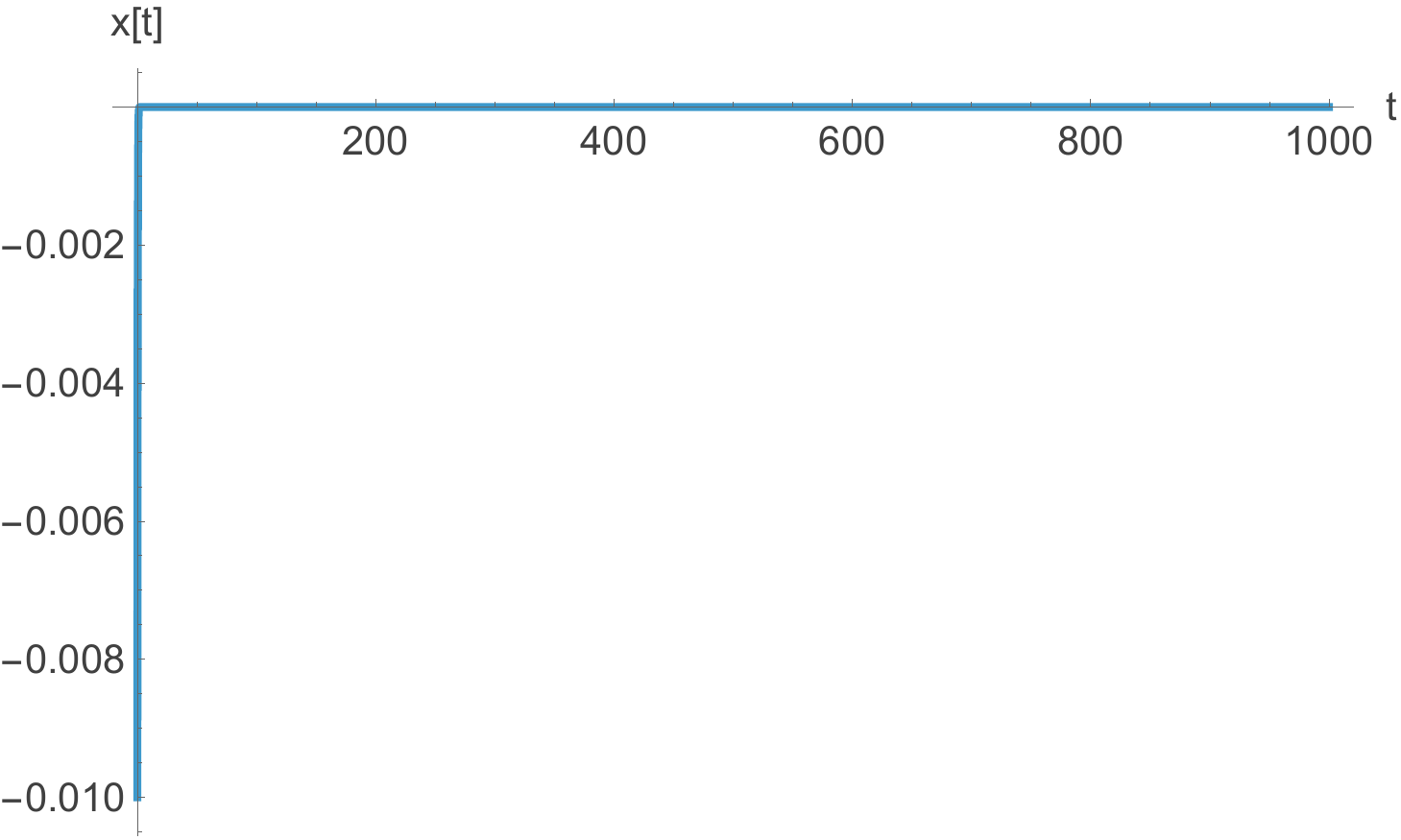}
    \caption{$\tau_2=0$}
	\label{tau20}
\end{subfigure}
\hfill
\begin{subfigure}{0.32\textwidth}
    \centering
    \includegraphics[width=\linewidth]{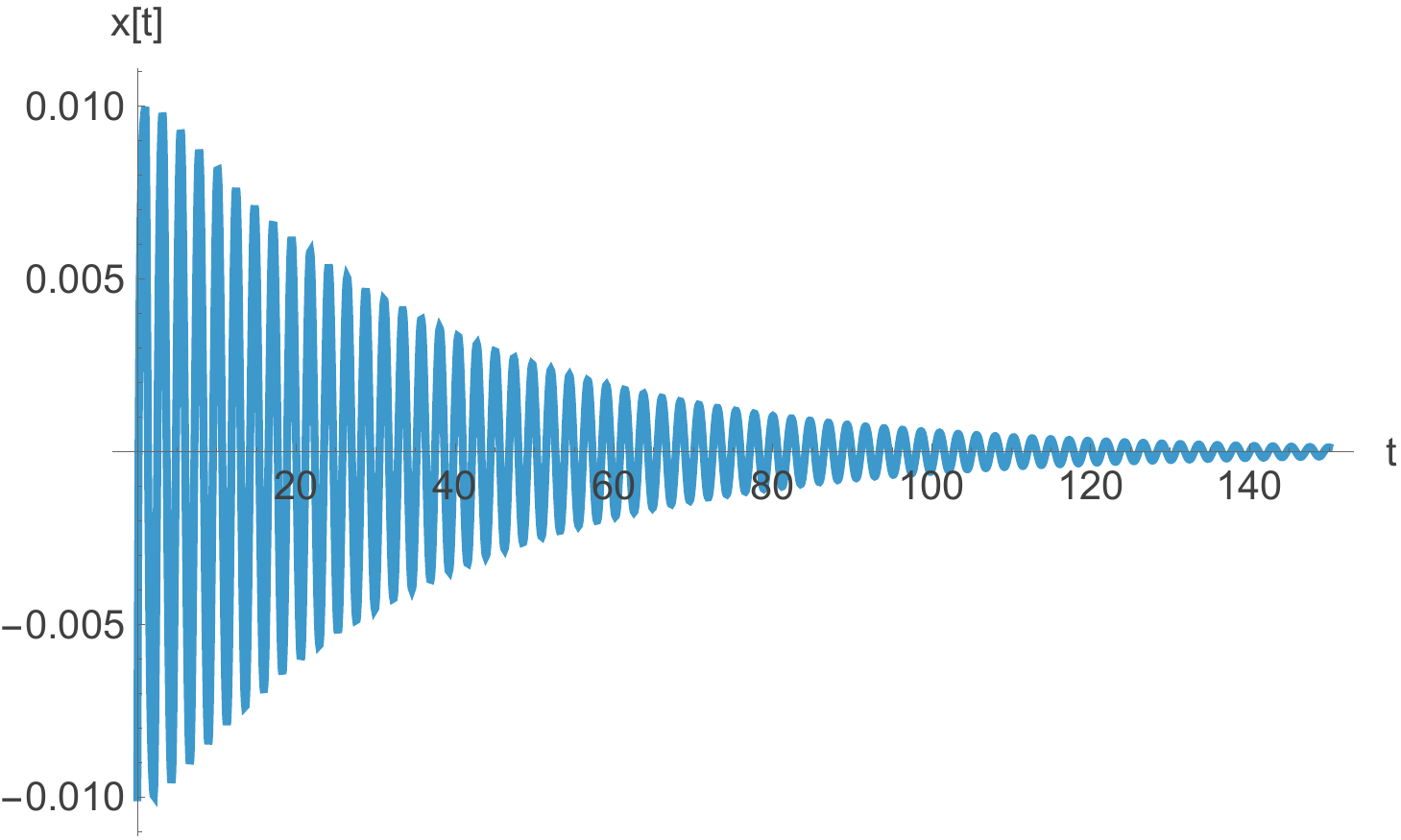}
    \caption{$\tau_2=0.5$}
\end{subfigure}
\hfill
\begin{subfigure}{0.32\textwidth}
    \centering
    \includegraphics[width=\linewidth]{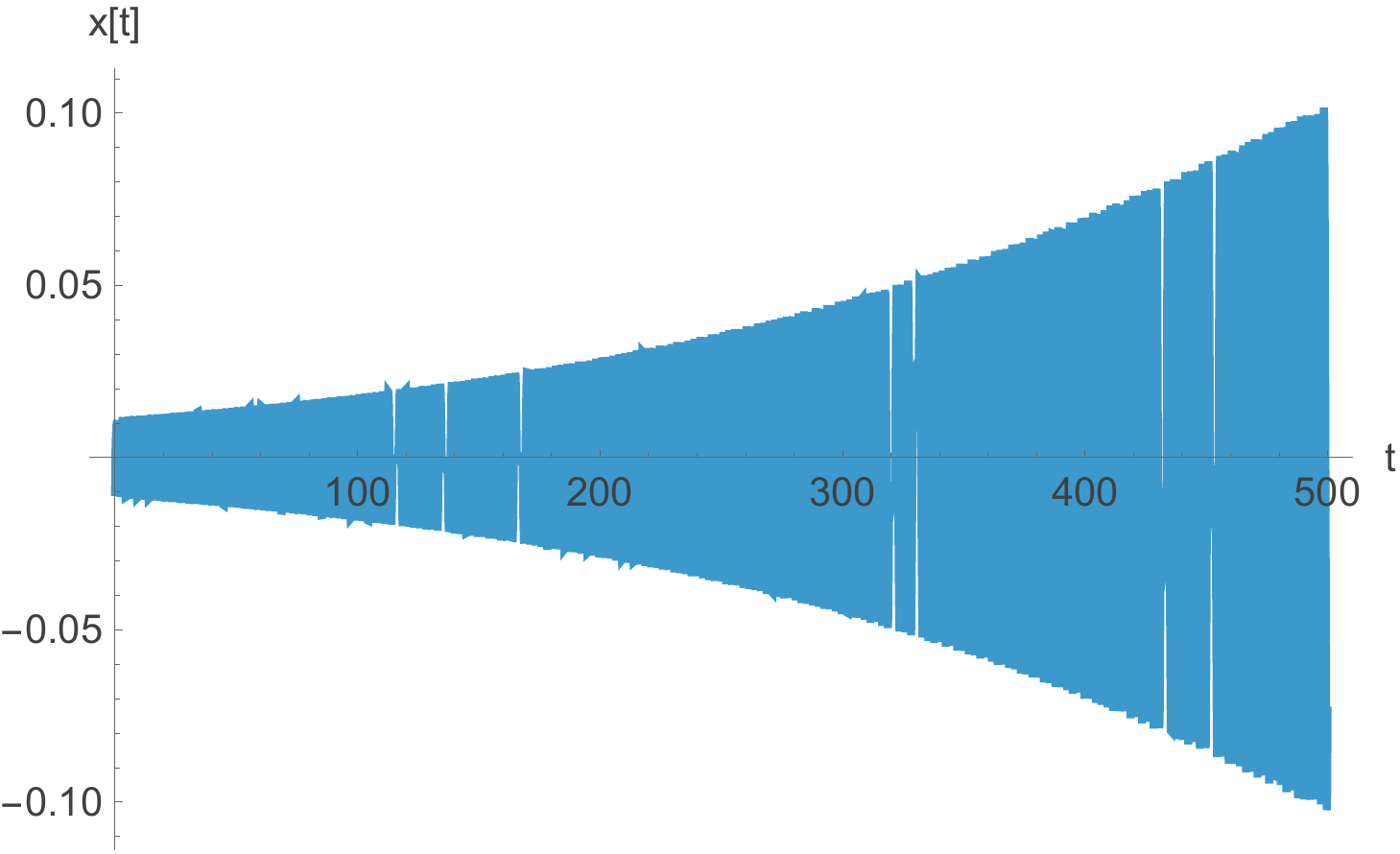}
    \caption{$\tau_2=0.7$}
\end{subfigure}

\vspace{0.4cm}

\begin{subfigure}{0.32\textwidth}
    \centering
    \includegraphics[width=\linewidth]{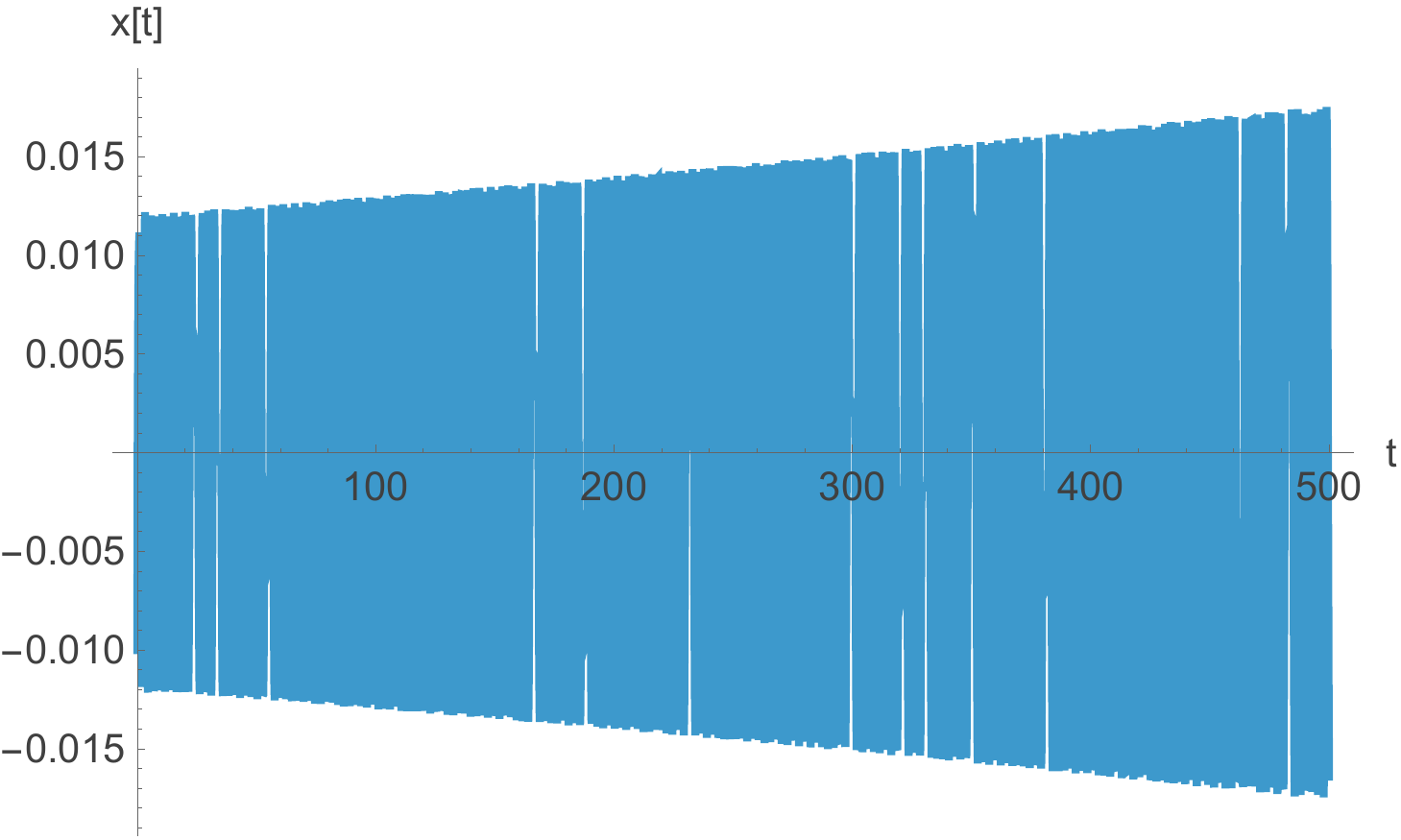}
    \caption{$\tau_2=1.2$}
\end{subfigure}
\hfill
\begin{subfigure}{0.32\textwidth}
    \centering
    \includegraphics[width=\linewidth]{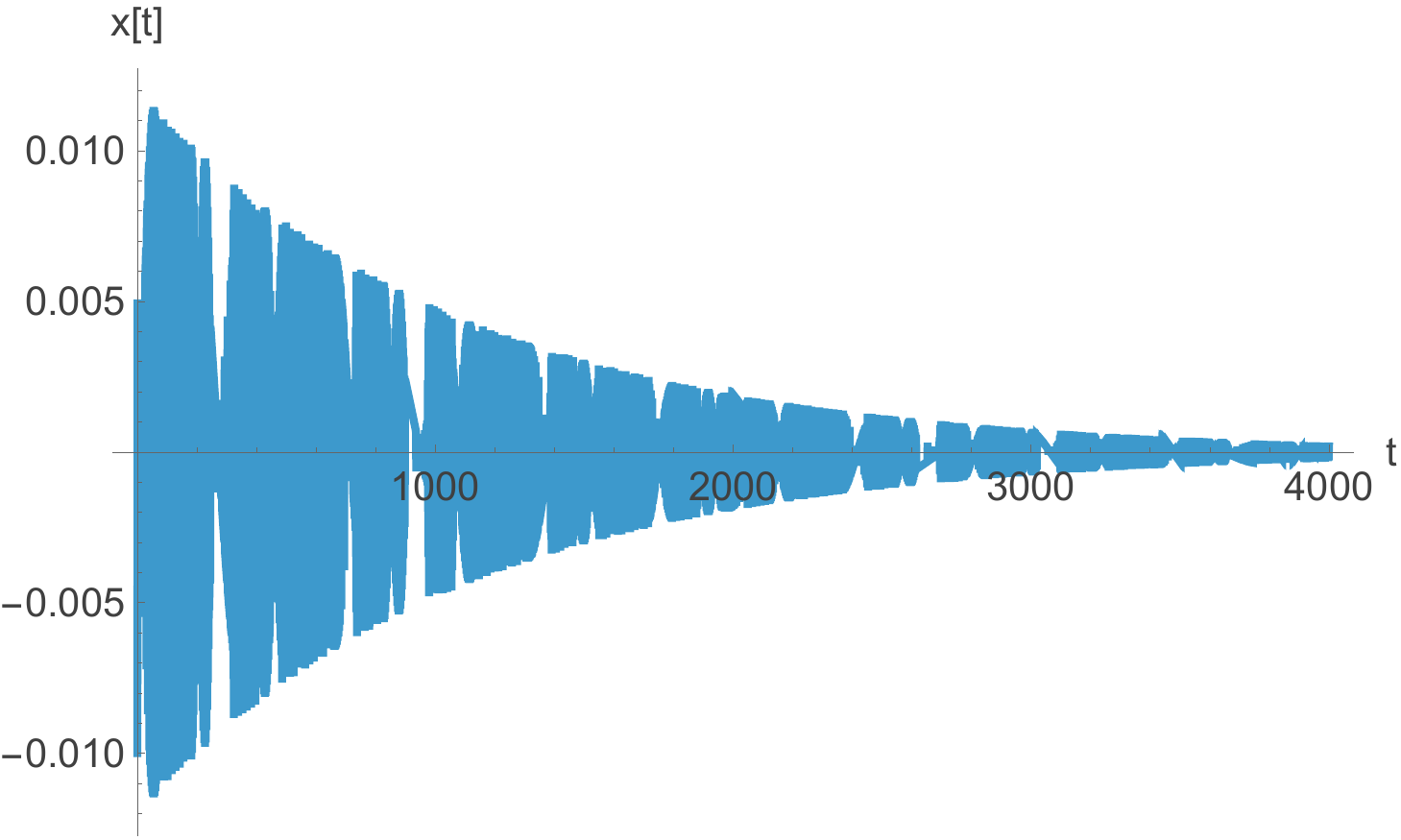}
    \caption{$\tau_2=1.3$}
\end{subfigure}
\hfill
\begin{subfigure}{0.32\textwidth}
    \centering
    \includegraphics[width=\linewidth]{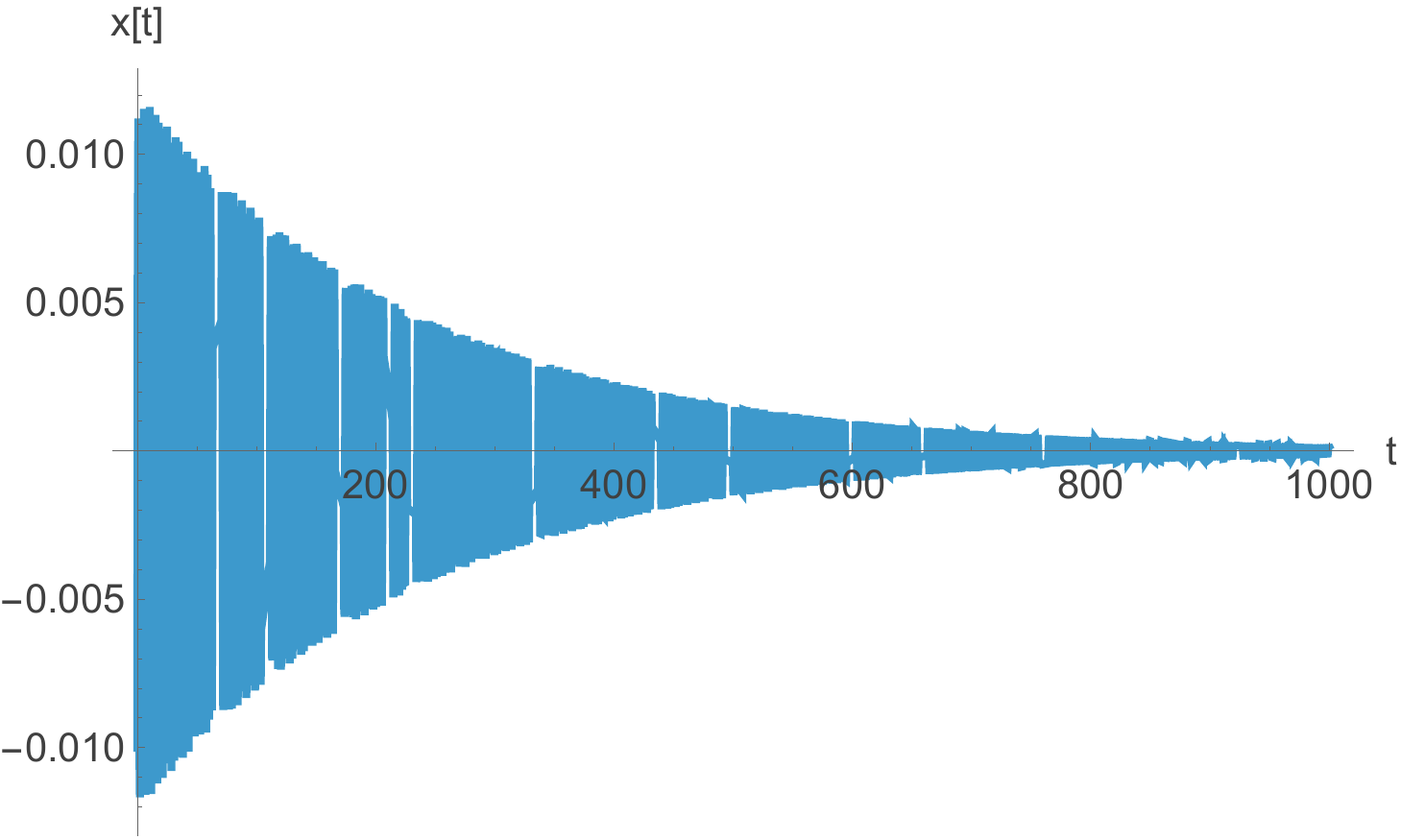}
    \caption{$\tau_2=2$}
\end{subfigure}

\caption{Time series for $k=-5,\gamma=4.3,\tau_1=1$}
\label{fig:stability_switch}
\end{figure}
}
\section{Chaotic Dynamics}
\label{sec:chaos}
In this section, we investigate the nonlinear dynamical behavior of system~\eqref{eq:sinx} induced by time-delay effects. Numerical simulations are carried out to examine the influence of the second delay parameter $\tau_2$, while the remaining parameters are kept fixed. All simulations, phase portraits, and bifurcation diagrams presented in this section are constructed with respect to the trivial equilibrium point $x_*=0$.
All numerical simulations presented in this section are performed by prescribing a constant initial history
\[
x(t)=0.01, \qquad t \in \big[-(\tau_1+\tau_2),\,0\big],
\]
which corresponds to a small perturbation in a neighborhood of the trivial equilibrium point $x=0$.

For $k = 1$, $\gamma = 0.1$, and $\tau_1 = 5.1$, the system exhibits a rich variety of dynamical behaviors as the delay parameter $\tau_2$ is varied. To illustrate these transitions, a bifurcation diagram with respect to $\tau_2 > 0$ is constructed and shown in Figure ~\ref{4_g}.

\begin{figure}[h!]
	\centering
	\includegraphics[scale=0.4]{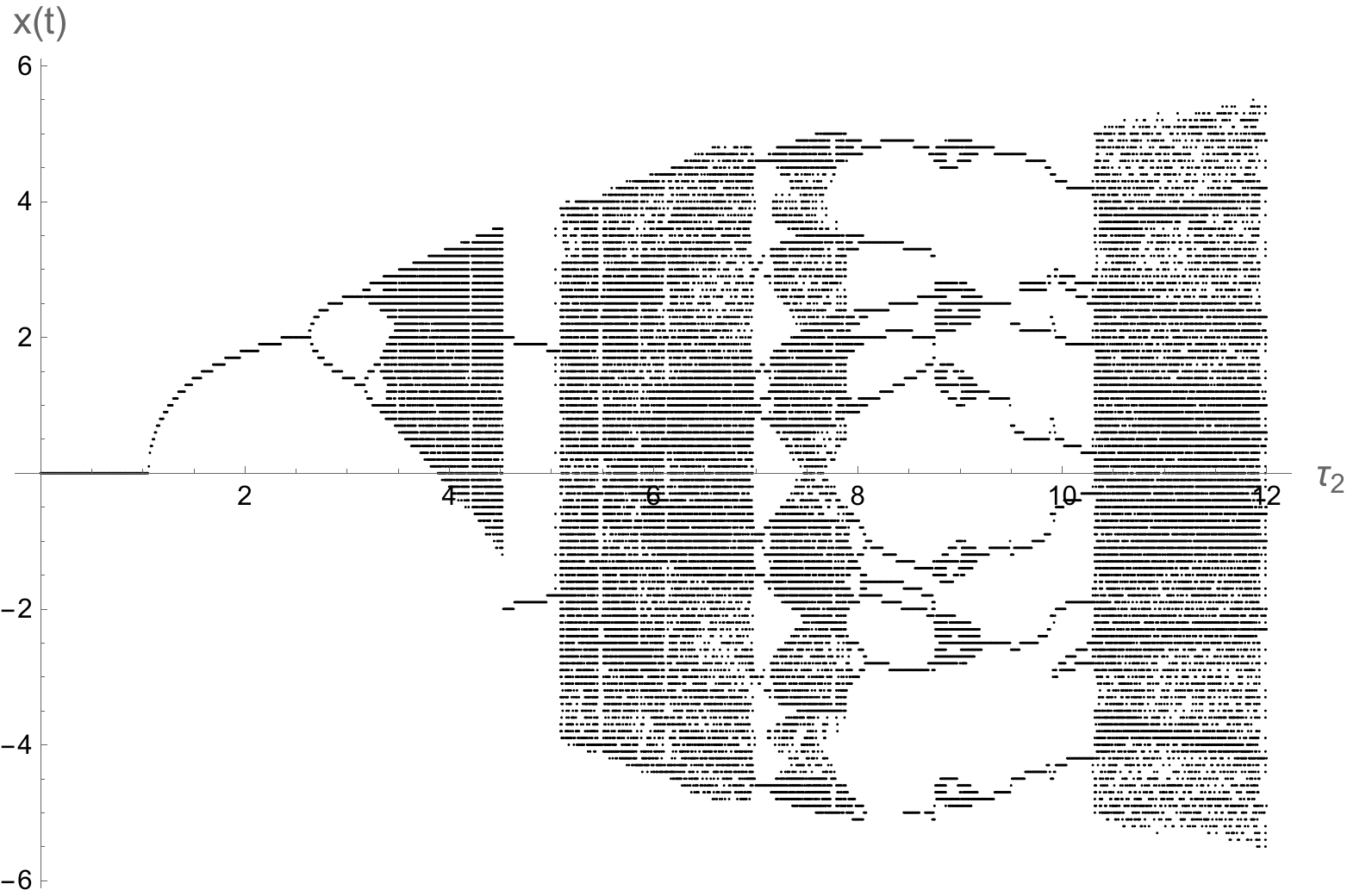}
	\caption{Bifurcation diagram of system~\eqref{eq:sinx} with respect to $\tau_2$ for $k=1$, $\gamma=0.1$, and $\tau_1=5.1$.}
	\label{4_g}
\end{figure}

Based on the bifurcation diagram and numerical simulations, the following observations are made:

\begin{itemize}
	\item \textbf{$0 \leq \tau_2 \leq 1.05$:}  
	The equilibrium point $x_* = 0$ is asymptotically stable. All trajectories converge to the trivial equilibrium, as illustrated in Figure ~\ref{fig:tau2_1} for $\tau_2 = 1$.
	
	\item \textbf{$1.05 < \tau_2 \leq 2.63$:}  
	The trivial equilibrium becomes unstable; however, trajectories are attracted to another equilibrium point that depends on $\tau_2$. For example, at $\tau_2 = 2$, the system converges to a nontrivial equilibrium $x_{2*} = 1.7750$, as shown in Figure ~\ref{fig:tau2_2}.
	
	\item \textbf{$2.63 < \tau_2 \leq 3.12$:}  
	The system exhibits stable period-2 oscillations. The equilibrium loses stability through a delay-induced bifurcation, giving rise to periodic motion. This behavior is confirmed for $\tau_2 = 2.8$ in Figure ~\ref{fig:tau2_28}.
	
	\item \textbf{$3.12 < \tau_2 \leq 3.5$:}  
	A period-doubling phenomenon is observed, leading to higher-period oscillations. This transition is evident in the phase portrait and time series for $\tau_2 = 3.3$, shown in Figures ~\ref{fig:tau2_33_phase} and~\ref{fig:tau2_33_ts}.
	
	\item \textbf{$3.5 < \tau_2 \leq 4.19$:}  
	The system exhibits chaotic oscillations for the first time in this interval. The periodic orbit loses stability and a single-scroll chaotic attractor emerges, as illustrated in Figure ~\ref{fig:tau2_38} for $\tau_2 = 3.8$.
	
	\item \textbf{$4.19 < \tau_2 \leq 4.22$:}  
	A narrow periodic window appears within the chaotic regime. Periodic oscillations and the corresponding time series are shown in Figures ~\ref{fig:tau2_42_phase} and ~\ref{fig:tau2_42_ts} for $\tau_2 = 4.2$.
	
	\item \textbf{$4.22 < \tau_2 \leq 4.51$:}  
	Chaotic behavior reappears after the periodic window. The chaotic attractor observed for $\tau_2 = 4.38$ is shown in Figure ~\ref{fig:tau2_438}.
	
	\item \textbf{$4.51 < \tau_2 < 5.11$:}  
	Another periodic window is observed. For $\tau_2 = 5$, the system exhibits stable periodic oscillations, as illustrated in the phase portrait and time series in Figures ~\ref{fig:tau2_5_phase} and~\ref{fig:tau2_5_ts}.
	
\item \textbf{$5.11 < \tau_2 < 6.98$:}  
This interval is predominantly chaotic, interrupted by narrow embedded periodic 
windows. For $\tau_2 = 5.4$, the system exhibits chaotic dynamics characterized by a 
double-scroll attractor (Figure ~\ref{fig:tau2_54}). As $\tau_2$ is increased slightly, 
the chaotic motion is suppressed and a periodic solution emerges at 
$\tau_2 = 5.47$, as illustrated in Figures ~\ref{fig:tau2_547_phase} 
and~\ref{fig:tau2_547_ts}. Upon further increase of $\tau_2$, the system transitions 
back to chaos, and for $\tau_2 = 6.2$ the chaotic attractor again displays a 
double-scroll structure (Figure ~\ref{fig:tau2_62}).

\end{itemize}

\begin{figure}[h!]
	\centering
	\begin{subfigure}[b]{0.31\textwidth}
		\includegraphics[height=3cm]{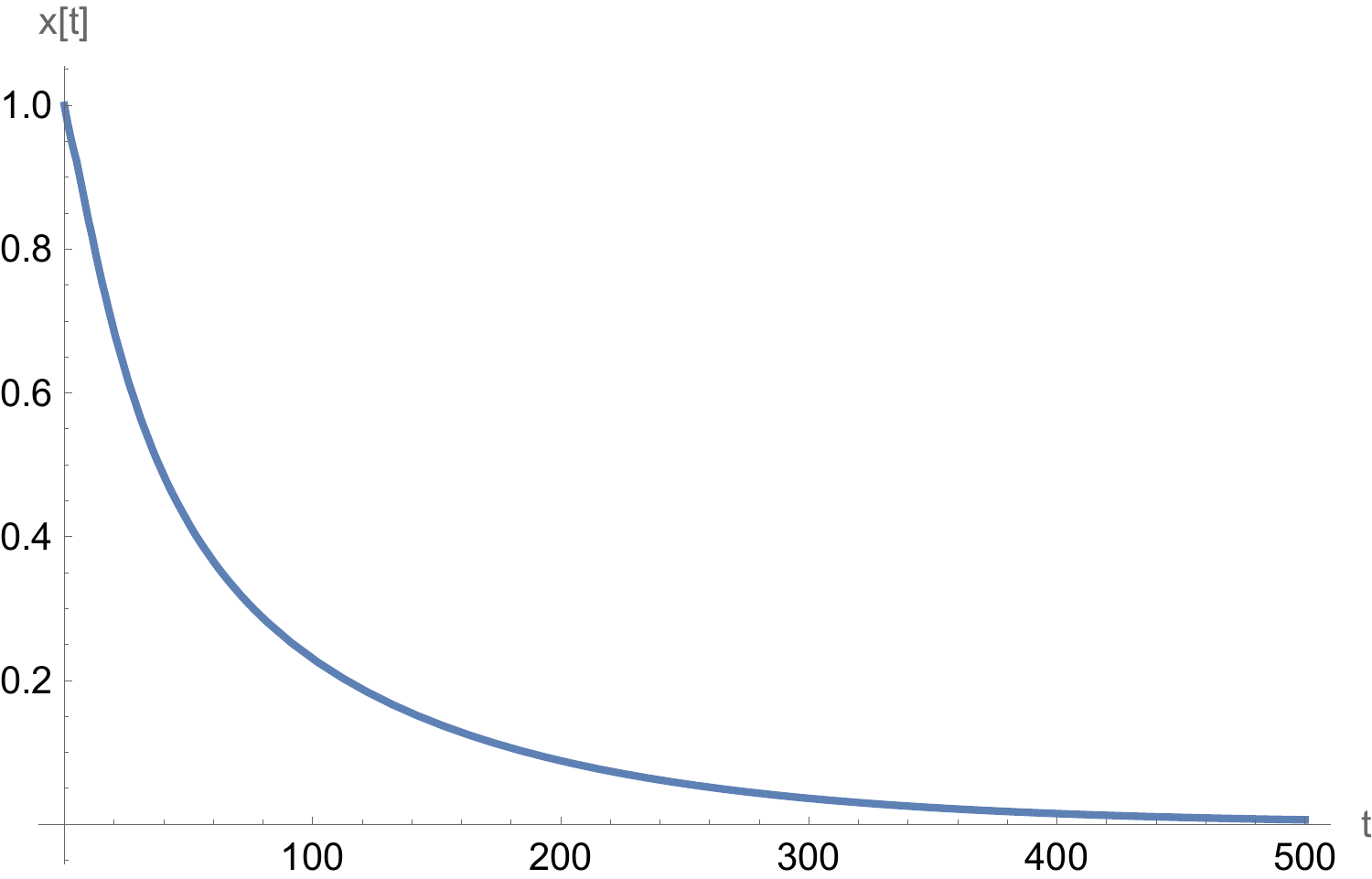}
		\caption{$\tau_2 = 1$}
		\label{fig:tau2_1}
	\end{subfigure}
	\hfill
	\begin{subfigure}[b]{0.31\textwidth}
		\includegraphics[height=3cm]{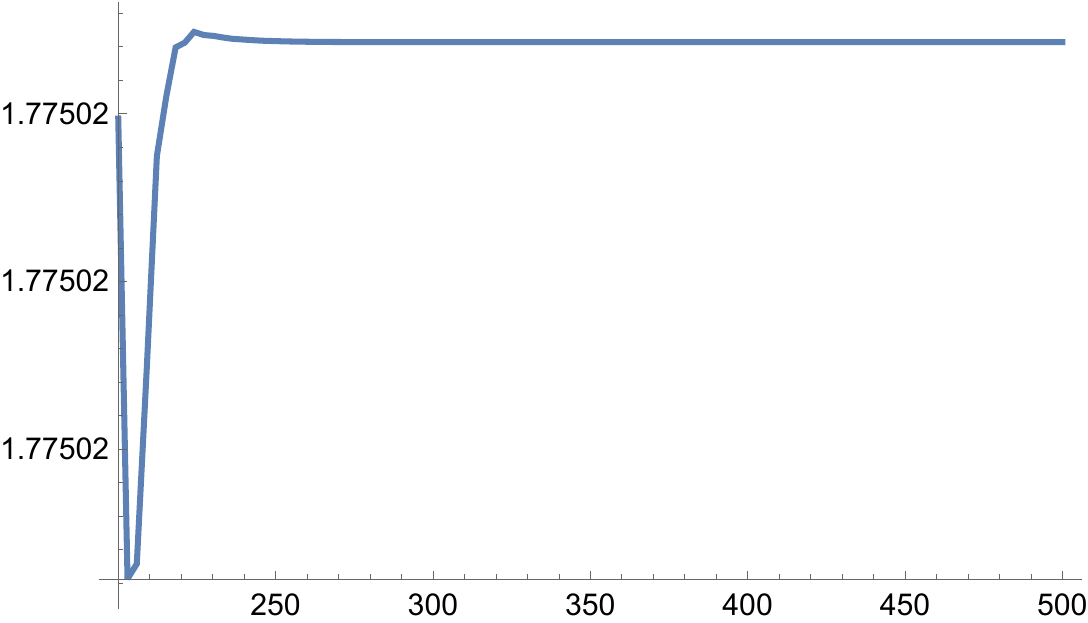}
		\caption{$\tau_2 = 2$}
		\label{fig:tau2_2}
	\end{subfigure}
	\hfill
	\begin{subfigure}[b]{0.31\textwidth}
		\includegraphics[height=3cm]{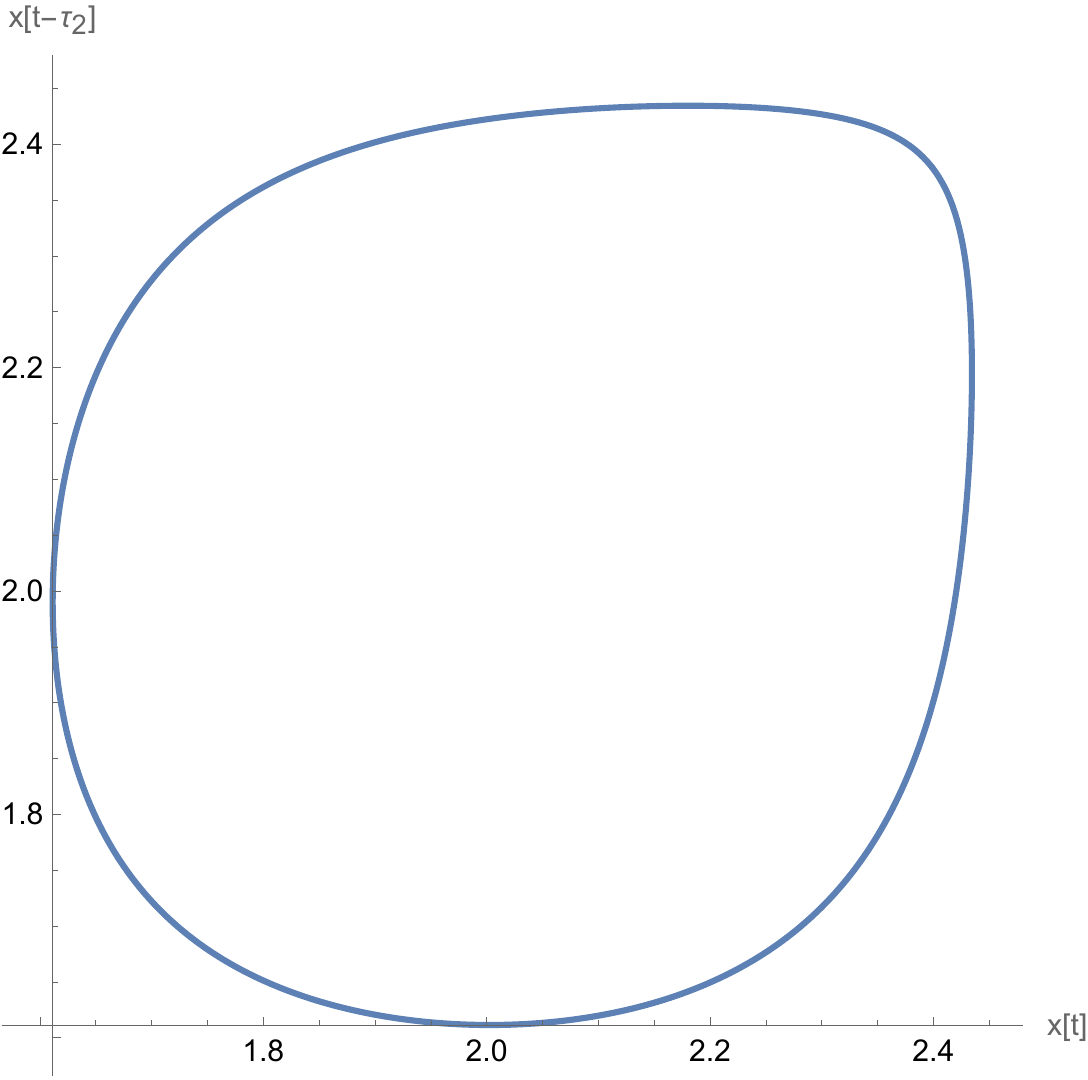}
		\caption{$\tau_2 = 2.8$}
		\label{fig:tau2_28}
	\end{subfigure}
	\caption{Phase portraits showing convergence to equilibrium and the emergence of periodic oscillations for increasing values of $\tau_2$.}
\end{figure}

\begin{figure}[h!]
	\centering
	\begin{subfigure}[b]{0.31\textwidth}
		\includegraphics[height=3cm]{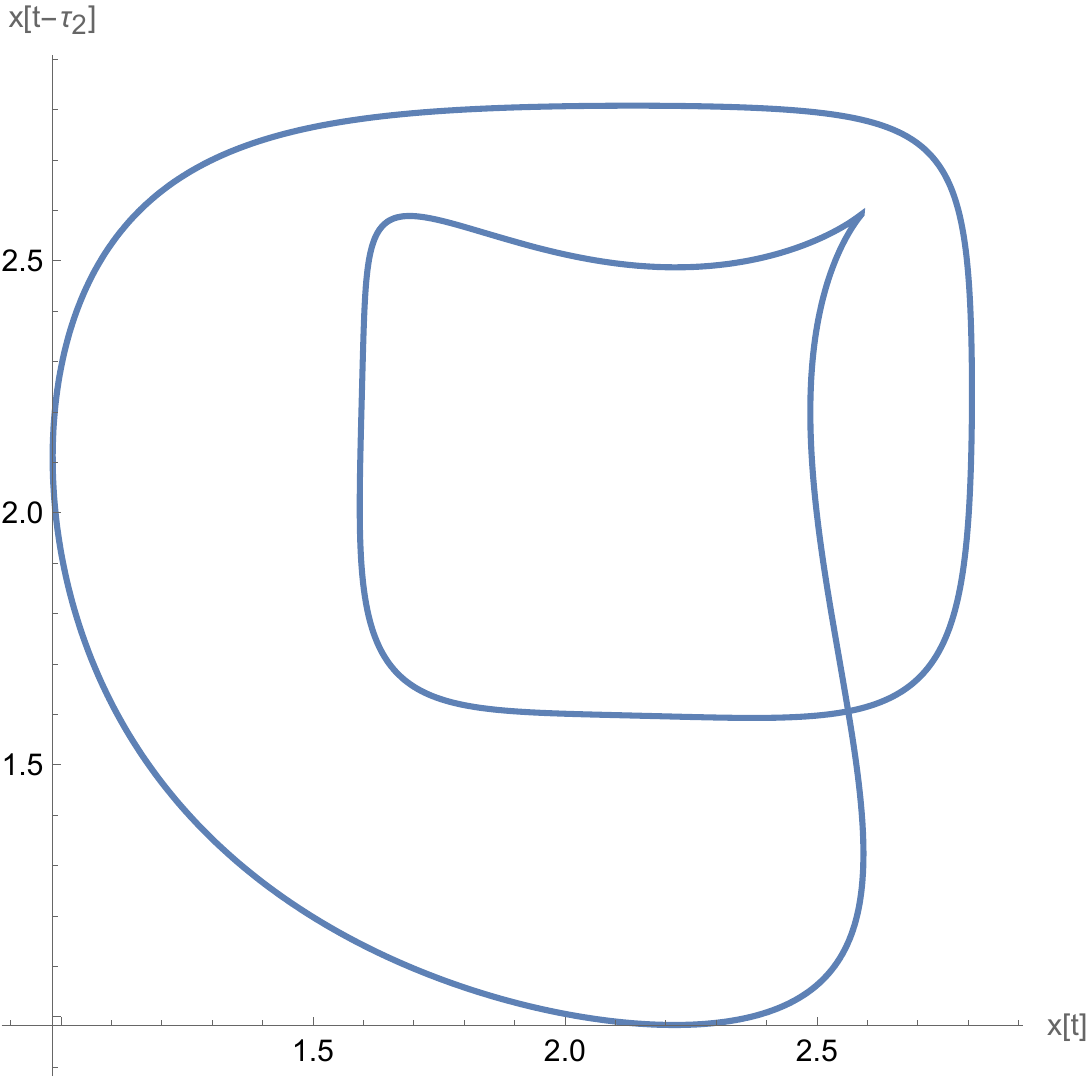}
		\caption{$\tau_2 = 3.3$}
		\label{fig:tau2_33_phase}
	\end{subfigure}
	\hfill
	\begin{subfigure}[b]{0.31\textwidth}
		\includegraphics[height=3cm]{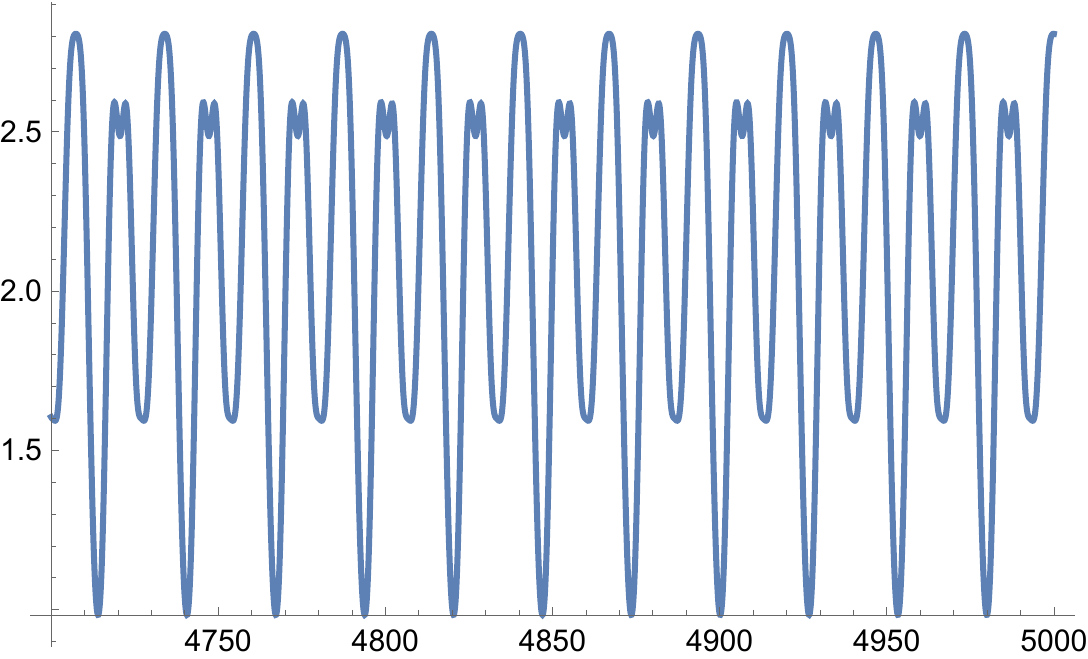}
		\caption{Time series for $\tau_2 = 3.3$}
		\label{fig:tau2_33_ts}
	\end{subfigure}
	\hfill
	\begin{subfigure}[b]{0.31\textwidth}
		\includegraphics[height=3cm]{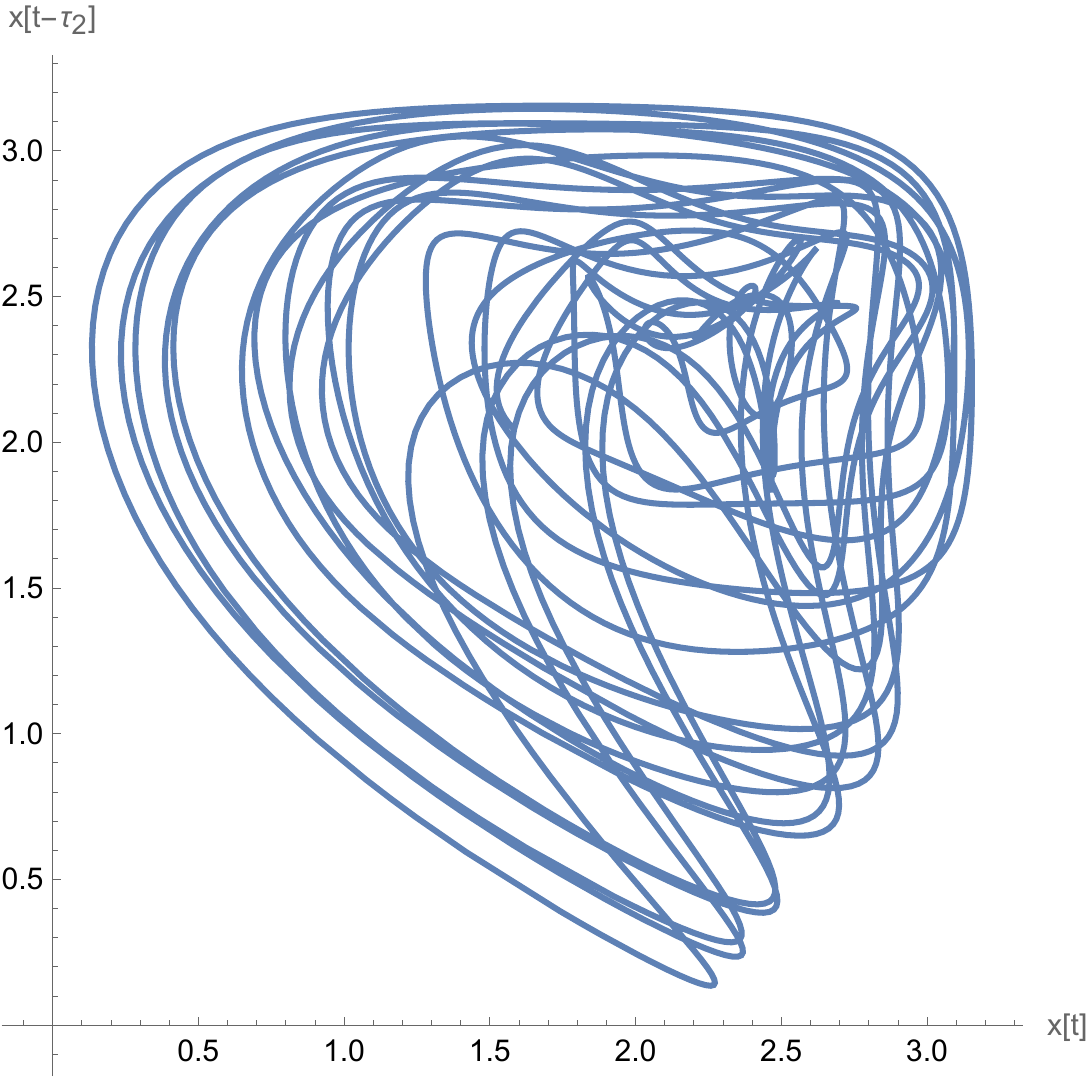}
		\caption{$\tau_2 = 3.8$}
		\label{fig:tau2_38}
	\end{subfigure}
	\caption{Transition from periodic motion to single-scroll chaotic oscillations.}
\end{figure}

\begin{figure}[h!]
	\centering
	\begin{subfigure}[b]{0.31\textwidth}
		\includegraphics[height=3cm]{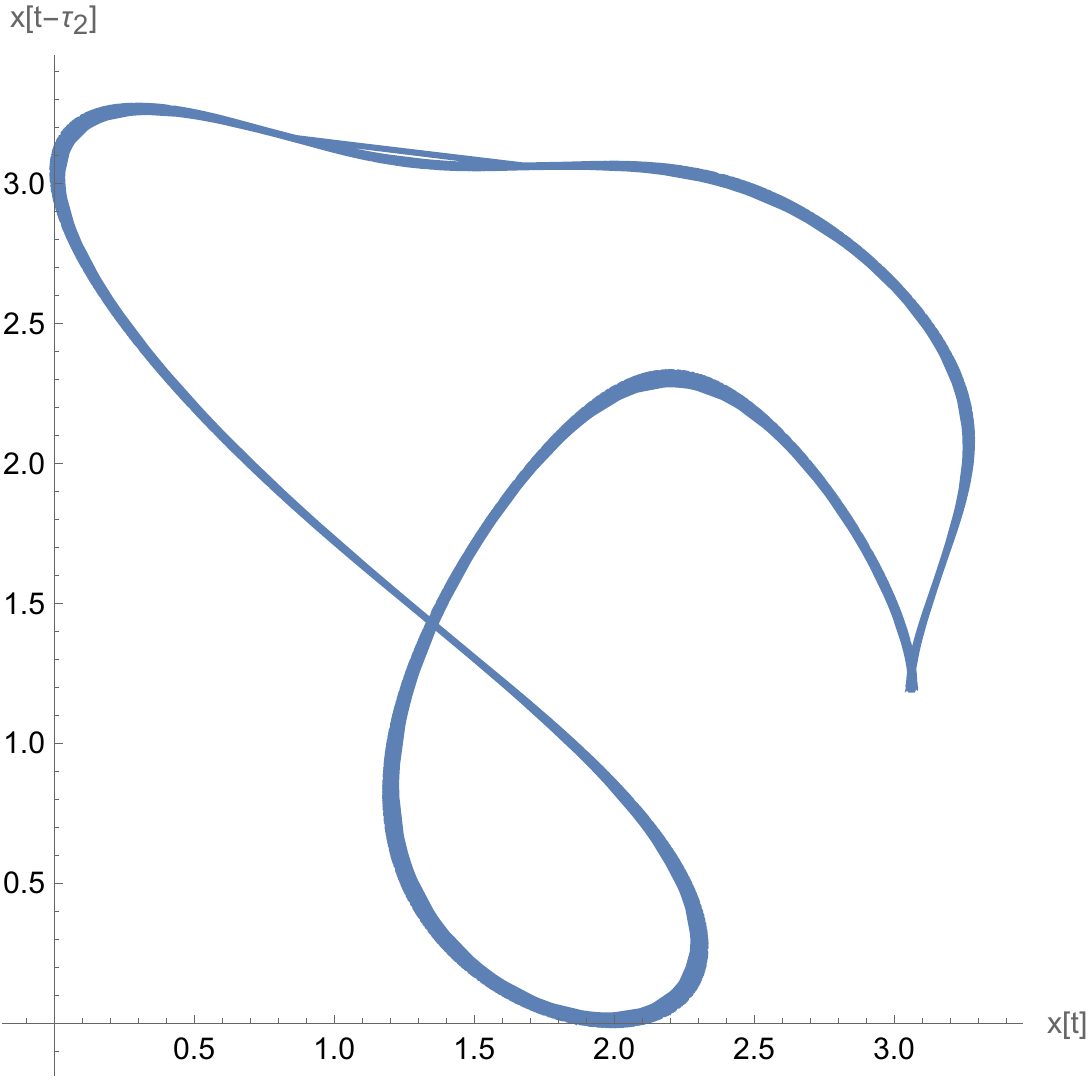}
		\caption{$\tau_2 = 4.2$}
		\label{fig:tau2_42_phase}
	\end{subfigure}
	\hfill
	\begin{subfigure}[b]{0.31\textwidth}
		\includegraphics[height=3cm]{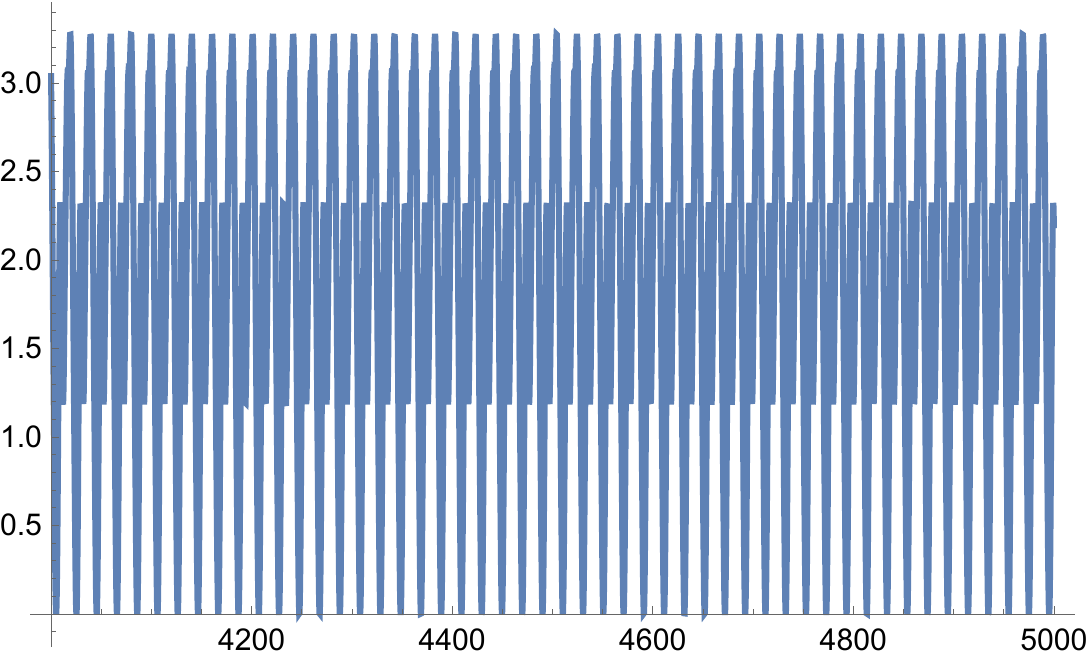}
		\caption{Time series for $\tau_2 = 4.2$}
		\label{fig:tau2_42_ts}
	\end{subfigure}
	\hfill
	\begin{subfigure}[b]{0.31\textwidth}
		\includegraphics[height=3cm]{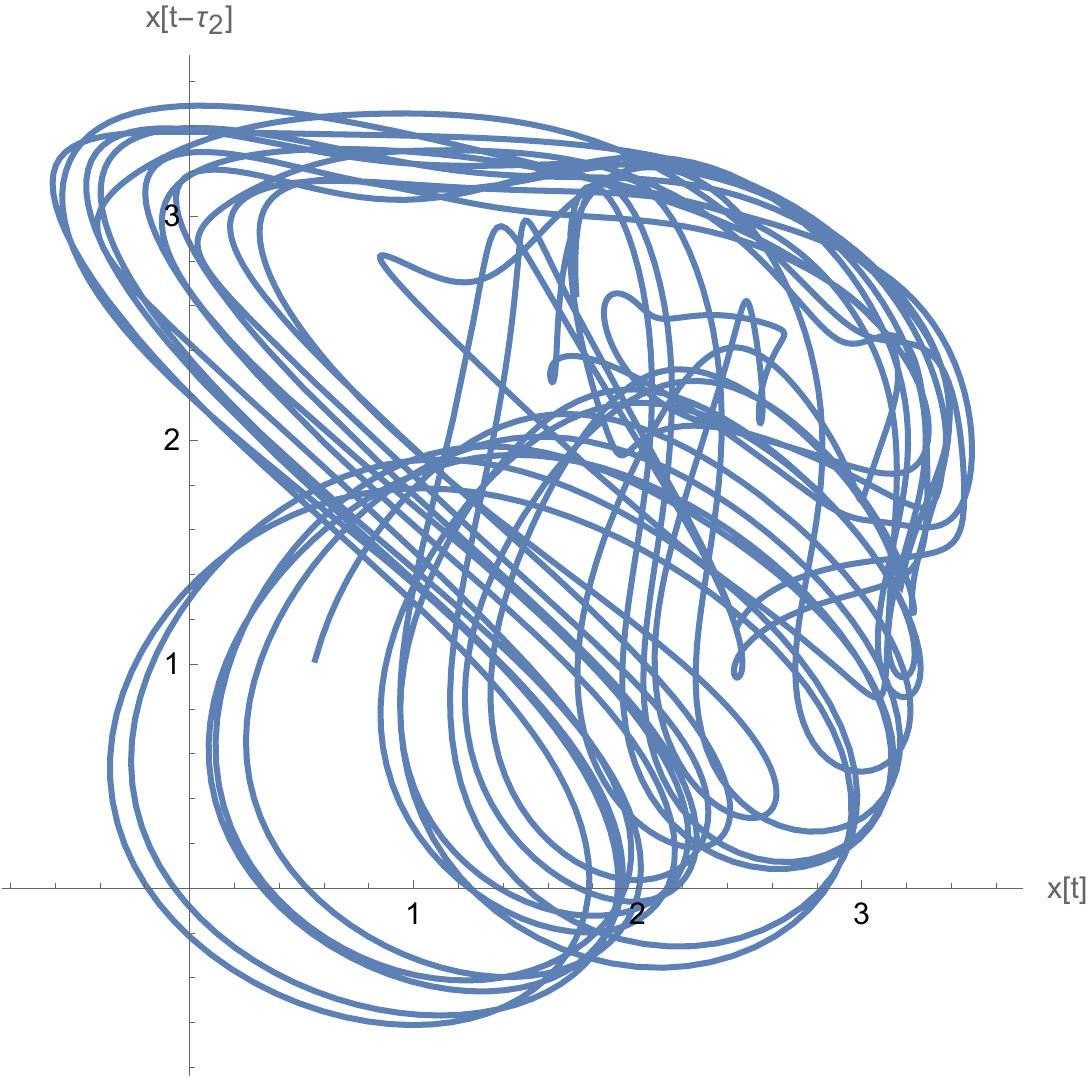}
		\caption{$\tau_2 = 4.38$}
		\label{fig:tau2_438}
	\end{subfigure}
	\caption{Periodic window followed by chaotic oscillations.}
\end{figure}

\begin{figure}[h!]
	\centering
	\begin{subfigure}[b]{0.31\textwidth}
		\includegraphics[height=3cm]{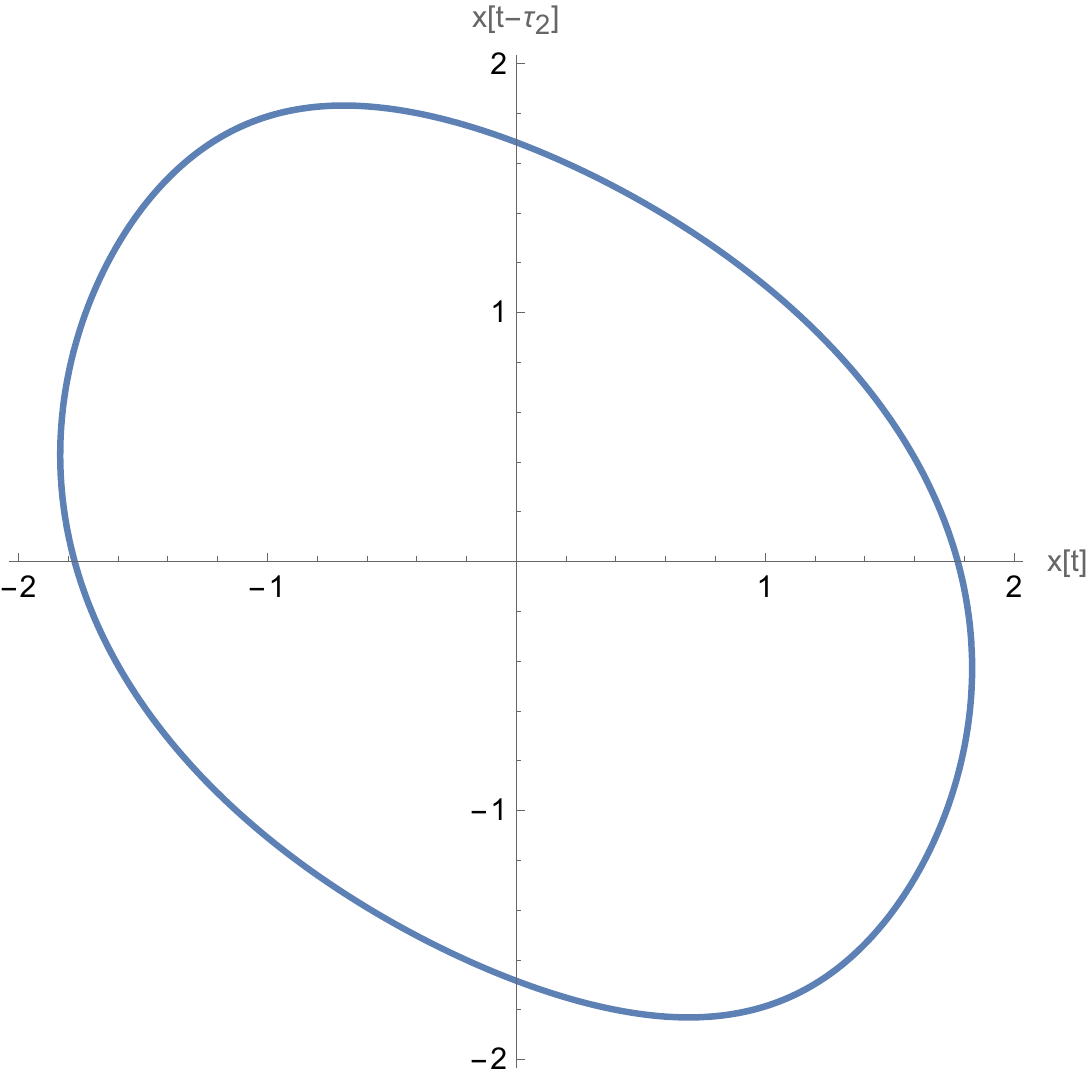}
		\caption{$\tau_2 = 5$}
		\label{fig:tau2_5_phase}
	\end{subfigure}
	\hfill
	\begin{subfigure}[b]{0.31\textwidth}
		\includegraphics[height=3cm]{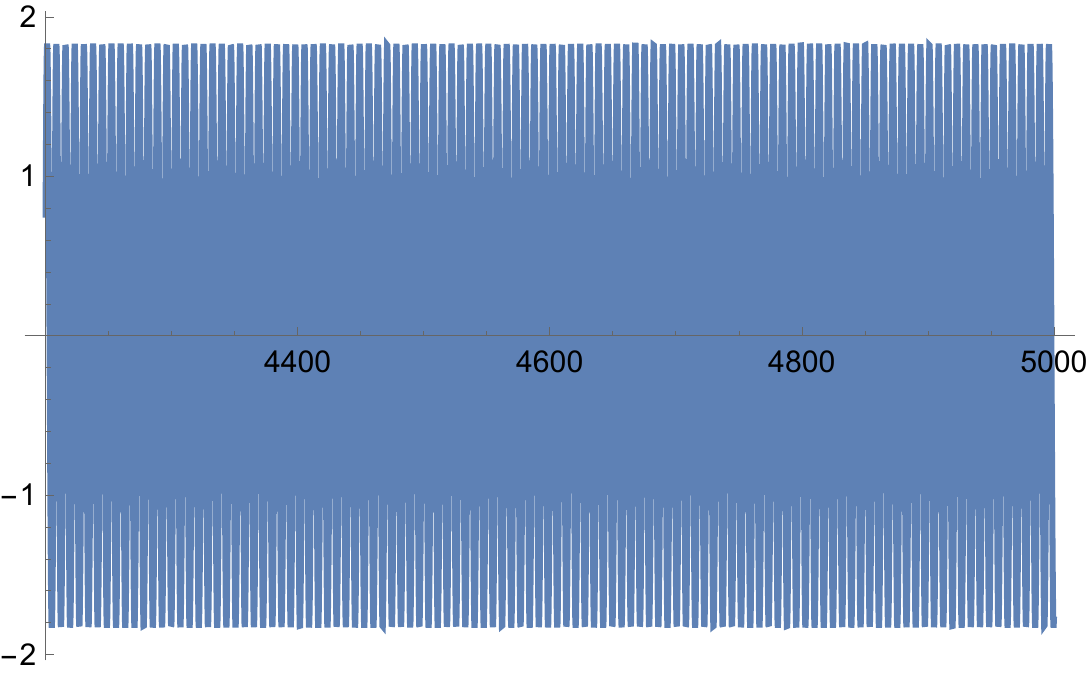}
		\caption{Time series for $\tau_2 = 5$}
		\label{fig:tau2_5_ts}
	\end{subfigure}
	\hfill
	\begin{subfigure}[b]{0.31\textwidth}
		\includegraphics[height=3cm]{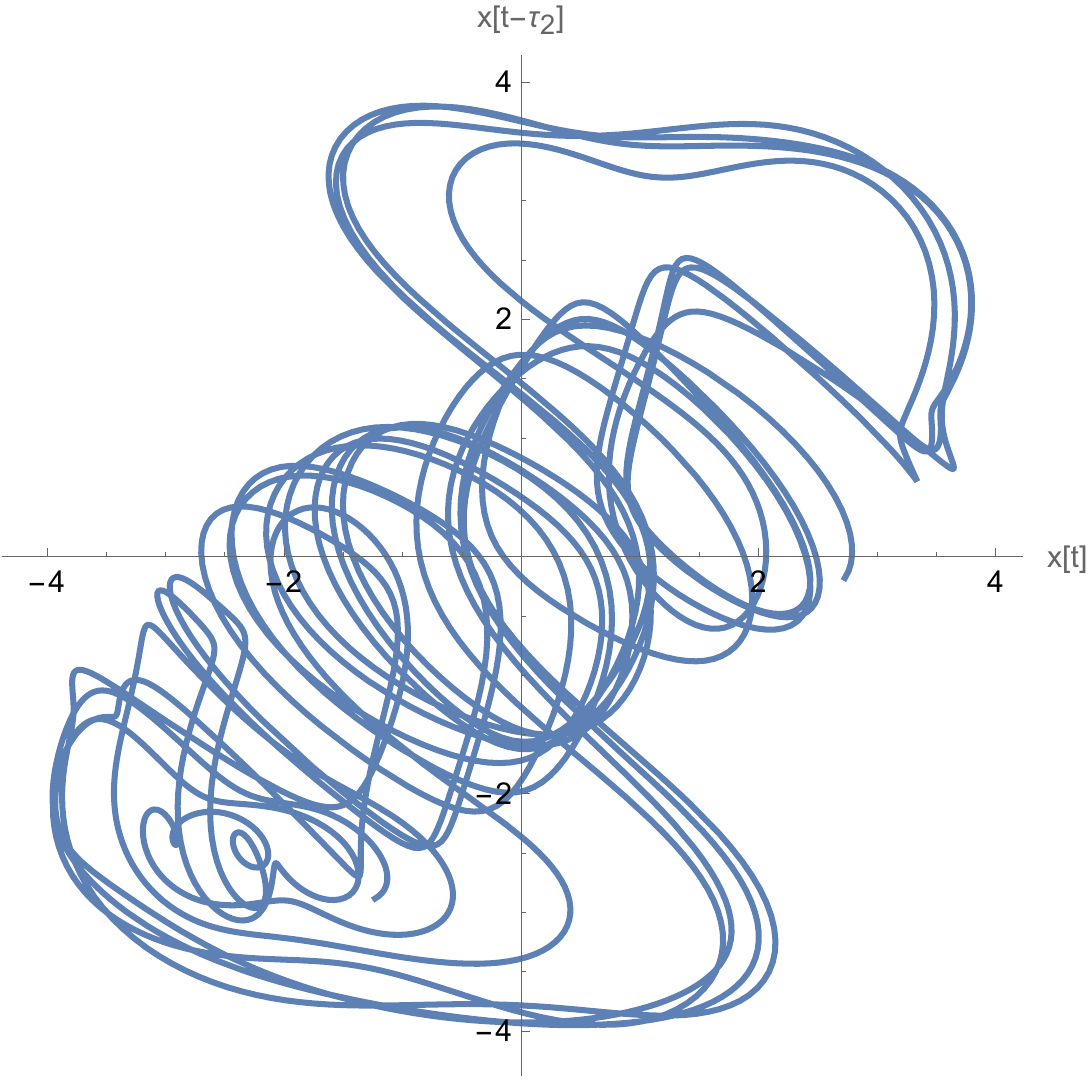}
		\caption{$\tau_2 = 5.4$}
		\label{fig:tau2_54}
	\end{subfigure}
	\caption{Transition from periodic dynamics to double-scroll chaotic behavior as $\tau_2$ is increased.}
\end{figure}

\begin{figure}[h!]
	\centering
	\begin{subfigure}[b]{0.31\textwidth}
		\includegraphics[height=3cm]{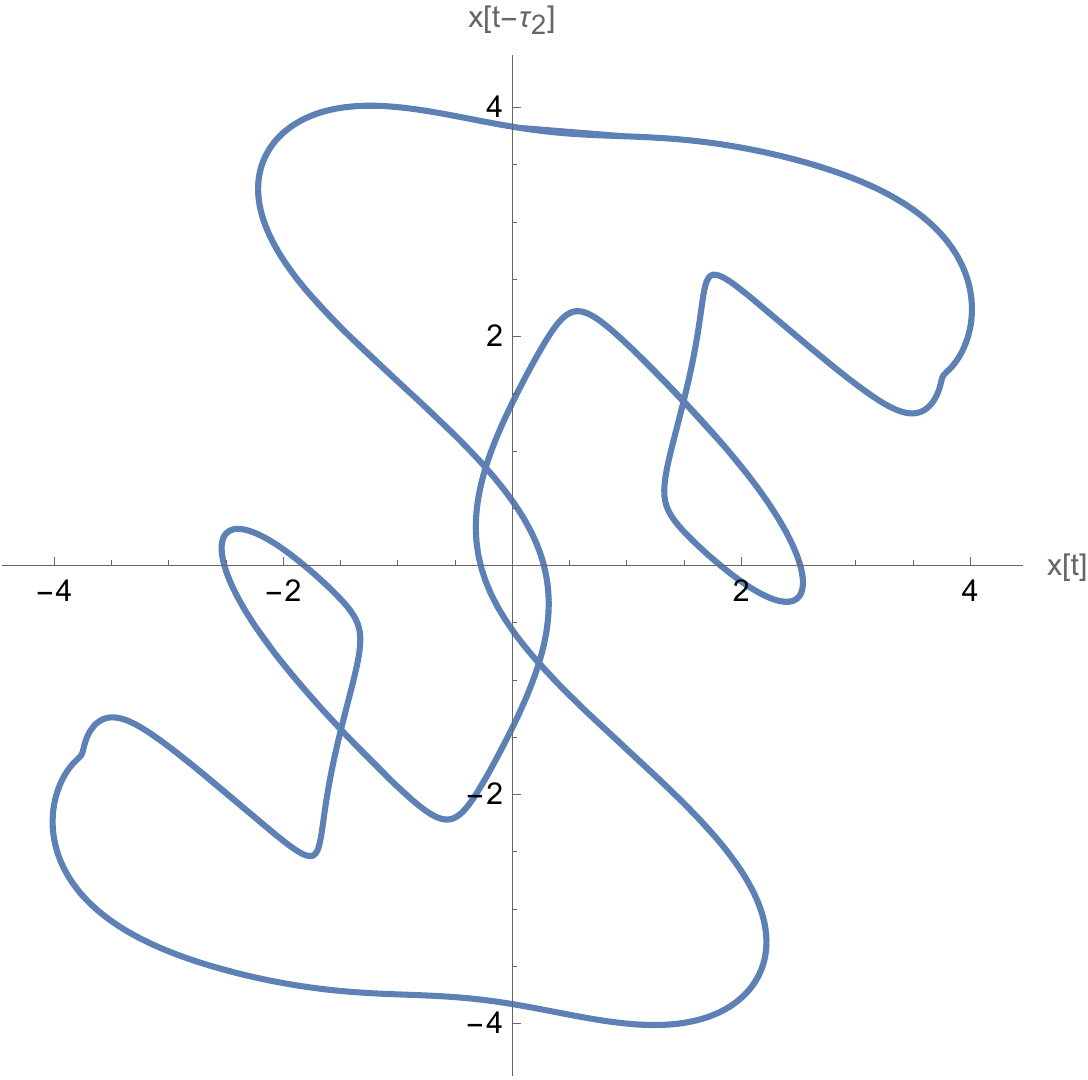}
		\caption{Phase portrait showing periodic oscillations at $\tau_2 = 5.47$}
		\label{fig:tau2_547_phase}
	\end{subfigure}
	\hfill
	\begin{subfigure}[b]{0.31\textwidth}
		\includegraphics[height=3cm]{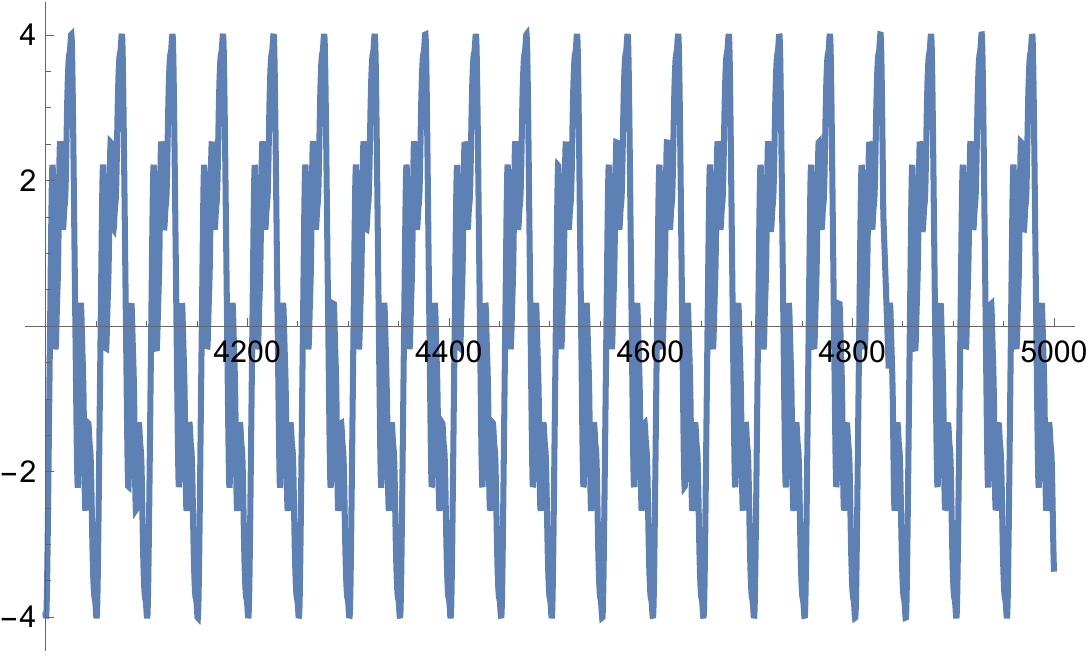}
		\caption{Time series confirming periodic dynamics for $\tau_2 = 5.47$}
		\label{fig:tau2_547_ts}
	\end{subfigure}
	\hfill
	\begin{subfigure}[b]{0.31\textwidth}
		\includegraphics[height=3cm]{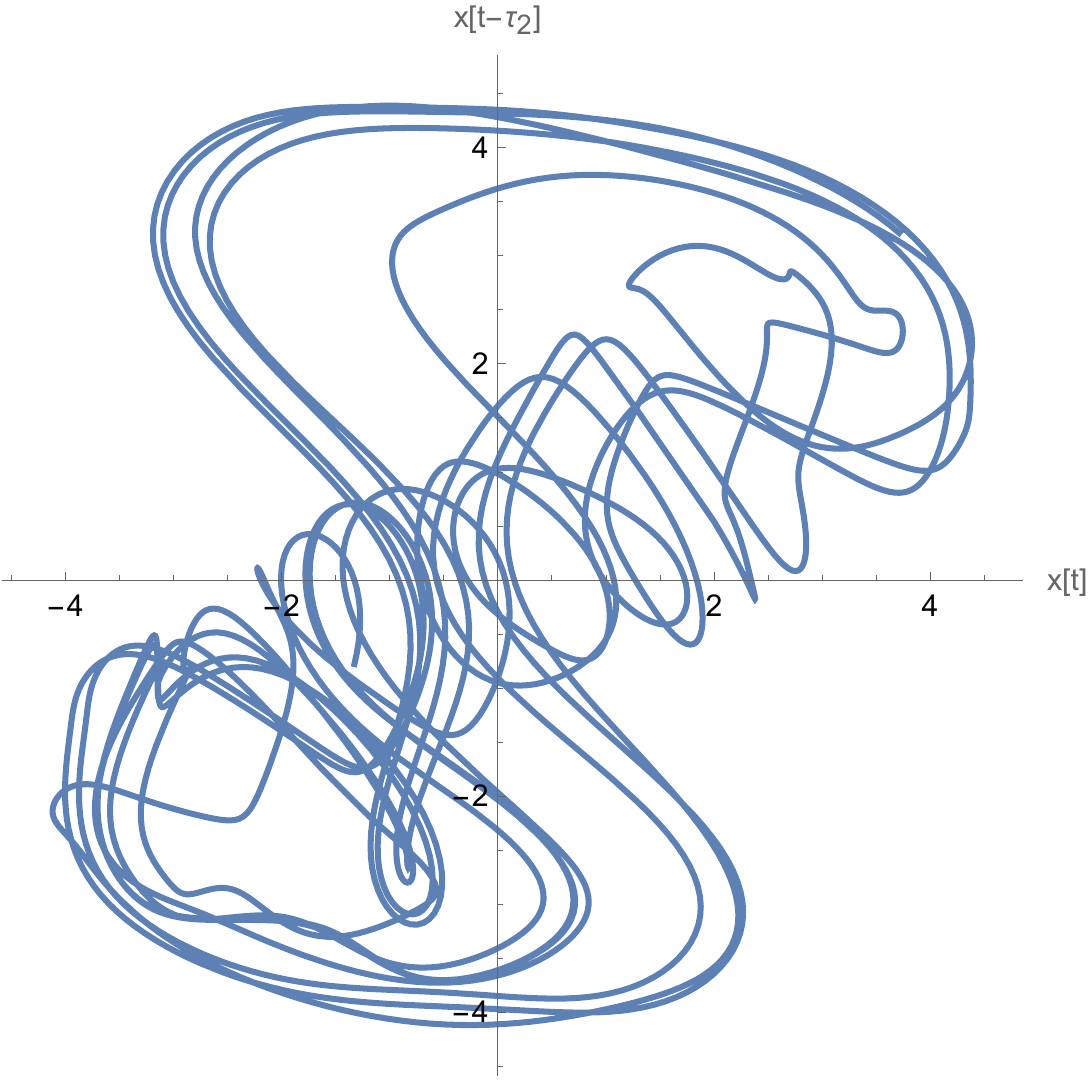}
		\caption{Double-scroll chaotic attractor at $\tau_2 = 6.2$}
		\label{fig:tau2_62}
	\end{subfigure}
	\caption{Reappearance of periodic oscillations within a chaotic regime followed by a return to double-scroll chaos.}
\end{figure}

Consequently, as $\tau_2$ increases, the system undergoes repeated dynamical transitions of the form
\[
\text{chaos}
\;\longrightarrow\;
\text{periodicity}
\;\longrightarrow\;
\text{chaos}
\;\longrightarrow\;
\text{periodicity}
\;\longrightarrow\;
\text{chaos},
\]
demonstrating multiple stability switchings .

	The chaotic nature of the observed attractors is quantitatively confirmed by computing the largest Lyapunov exponent using the algorithm described by Kodba et al. \cite{kodba2005detecting}, which is based on the time series analysis techniques and the work by Wolf et al. \cite{wolf1985determining}. The corresponding values of the maximal Lyapunov exponent for some of the representative values of $\tau_2$ are summarized in Table~\ref{tab:lyap}.

\begin{table}[h!]
	\centering
	\caption{Largest Lyapunov exponents for different values of $\tau_2$.}
	\label{tab:lyap}
	\begin{tabular}{c c c}
		\hline
		$\tau_2$ & $\lambda_{\max}$ & Dynamical behavior \\
		\hline
		1.0 & $-0.162552$ & Stable equilibrium \\
		2.8 & $-0.01$ & Periodic oscillations \\
		3.3 & $3.241006$  & Chaotic motion \\
       	4.2 & $0.07$ & Periodic oscillations\\
		5.4 & $6.261435$  & Strong chaos (double-scroll) \\
		\hline
	\end{tabular}
\end{table}
{
\subsection{Poincar\'e Return Maps and Power Spectral Analysis}
To gain further insight into the nature of the system dynamics, Poincar\'e return maps and Power Spectral Densities (PSDs) are examined for representative periodic and chaotic solutions, as these tools have been widely used to characterize and distinguish periodic and chaotic dynamics in nonlinear systems \cite{shahriari2023ordinal,yan2024construction,thamilmaran2024extreme}.

The Poincar\'e return map is constructed from the sequence of successive local extrema of the time series \cite{burykin2014dynamical}. If $\{x_n\}$ denotes the ordered maxima and minima of $x(t)$, the points $(x_n,x_{n+1})$ are plotted to reveal the underlying structure of the dynamics. This provides a simple geometric representation that distinguishes between regular and irregular motion.

The PSD is computed from the steady-state portion of the time series after removing the initial transient. A Hann window together with segment averaging is used to estimate the one-sided power spectral density, which reduces spectral leakage and provides a smoother spectral estimate \cite{mensour1998power}.

For the periodic case with $\tau_2=2.8$, the Poincar\'e return map consists of a finite set of isolated points, reflecting the repeating nature of the orbit (see Figure (\ref{PR28})) .The corresponding PSD contains sharp spectral lines located at the fundamental frequency and its harmonics, which is characteristic of periodic oscillations (see Figure (\ref{PSD28})) .

In contrast, for the chaotic case with $\tau_2=3.8$, the Poincar\'e return map no longer consists of isolated points but instead forms a scattered cloud distributed over a finite region of the plane (see Figure (\ref{PR38})). This behavior reflects the irregular evolution of the trajectory and its sensitive dependence on initial conditions. The corresponding PSD exhibits a broadband spectrum with several dominant peaks superimposed on a continuous background, indicating that the oscillations contain a wide range of frequencies (see Figure (\ref{PSD38})). These observations agree with the positive largest Lyapunov exponent reported in Table~\ref{tab:lyap}, further confirming the presence of chaotic dynamics.

\begin{figure}[H]
\centering

\begin{subfigure}[b]{0.32\textwidth}
\includegraphics[height=3.5cm]{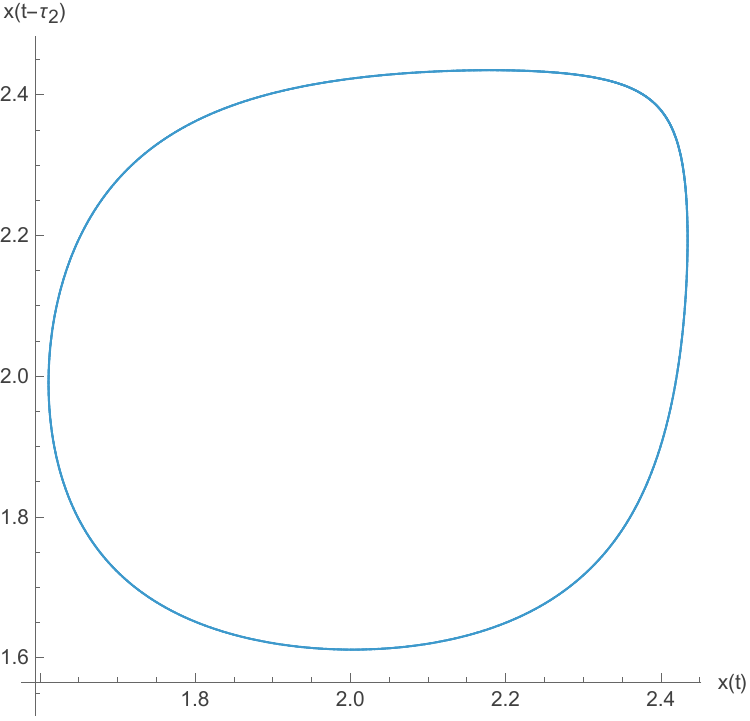}
\caption{Phase portrait ($\tau_2=2.8$)}
\end{subfigure}
\hfill
\begin{subfigure}[b]{0.32\textwidth}
\includegraphics[height=3.5cm]{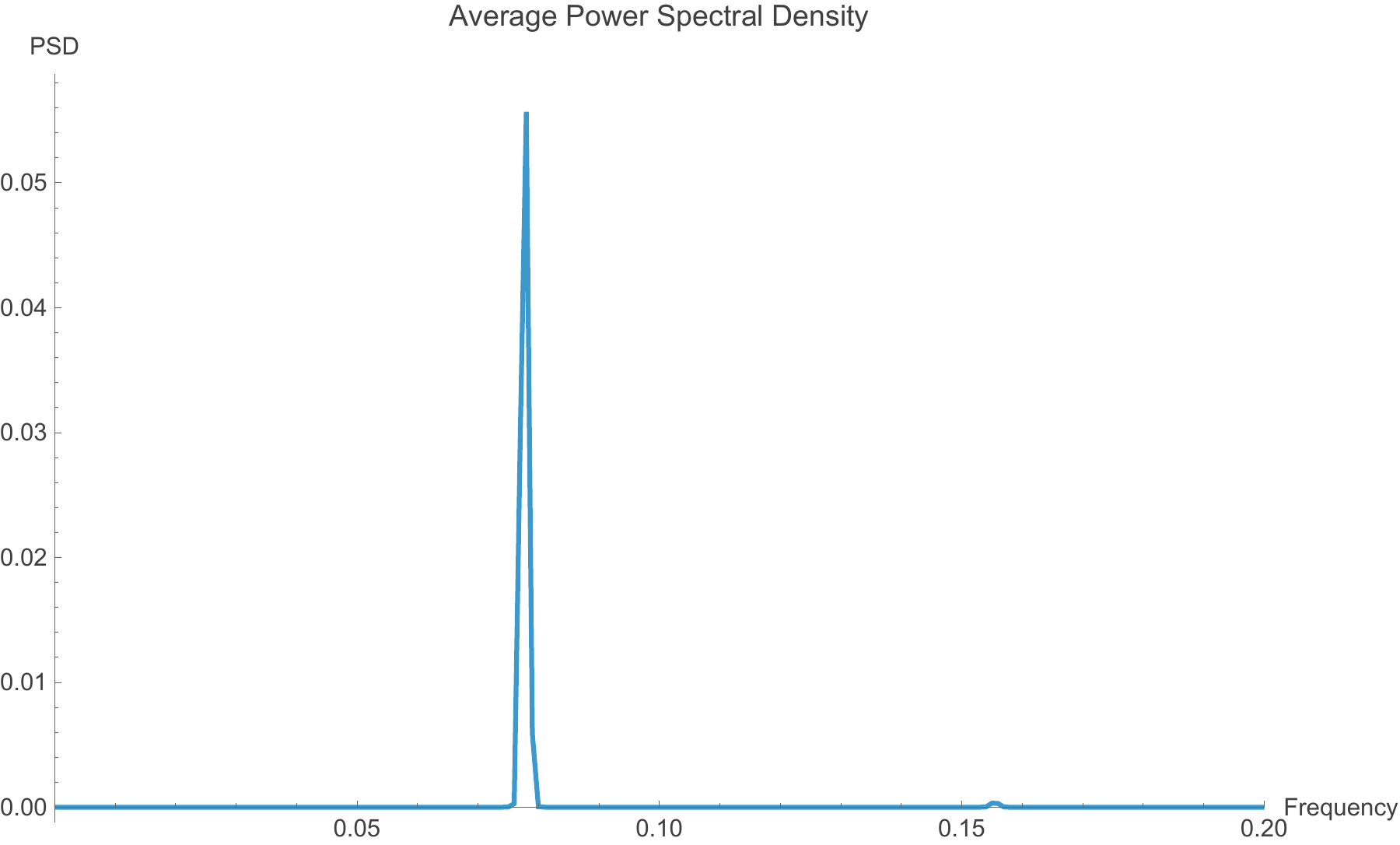}
\caption{PSD ($\tau_2=2.8$)}
\label{PSD28}
\end{subfigure}
\hfill
\begin{subfigure}[b]{0.32\textwidth}
\includegraphics[height=3.5cm]{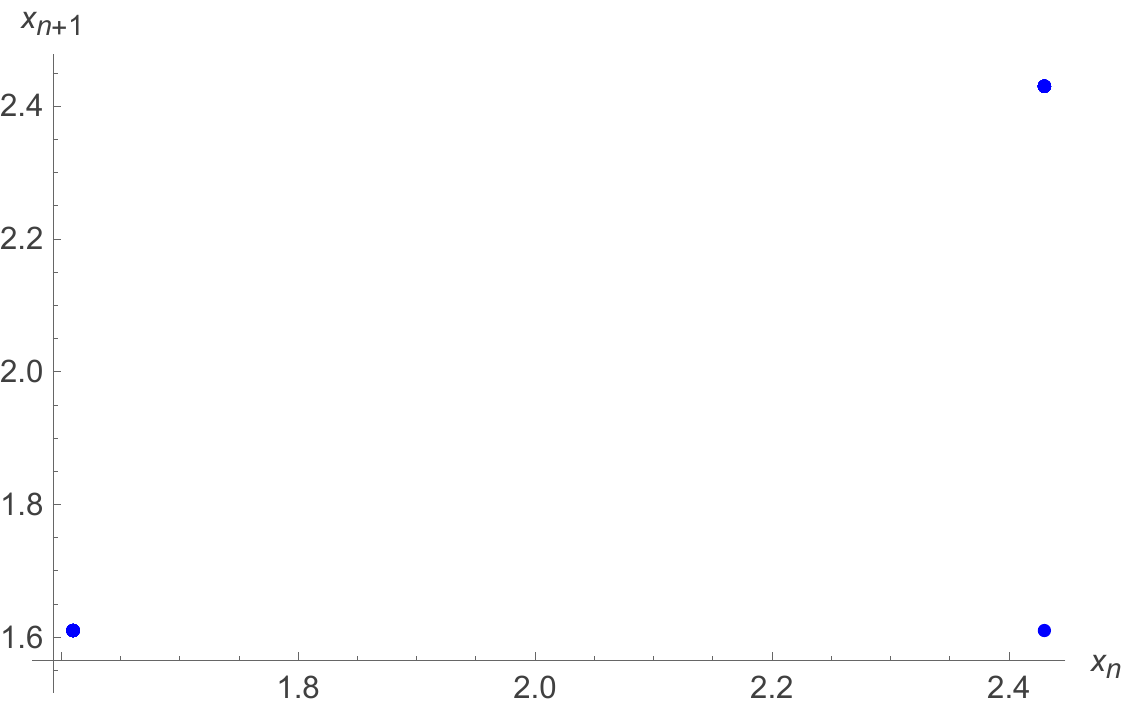}
\caption{Poincar\'e Return map ($\tau_2=2.8$)}
\label{PR28}
\end{subfigure}

\vspace{0.4cm}

\begin{subfigure}[b]{0.32\textwidth}
\includegraphics[height=3.5cm]{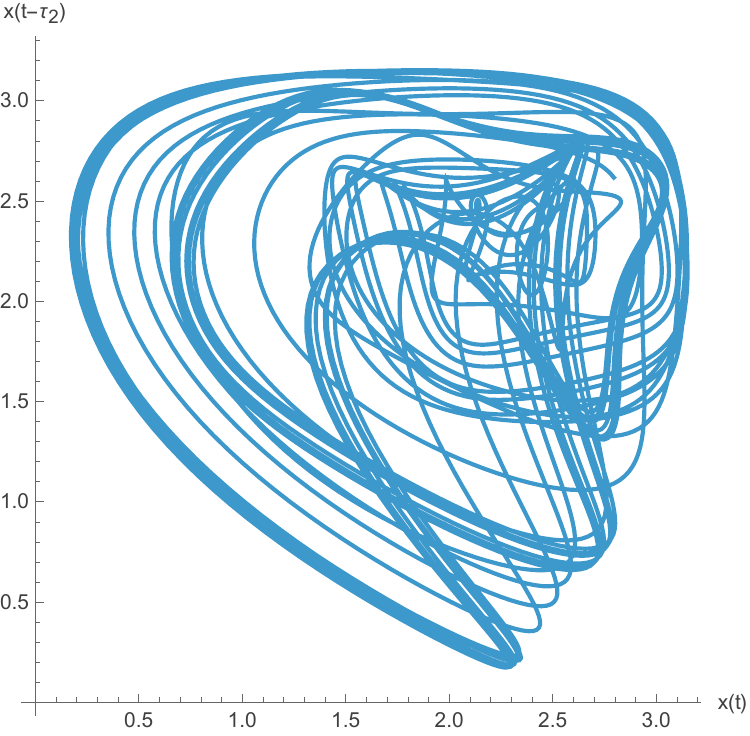}
\caption{Phase portrait ($\tau_2=3.8$)}
\end{subfigure}
\hfill
\begin{subfigure}[b]{0.32\textwidth}
\includegraphics[height=3.5cm]{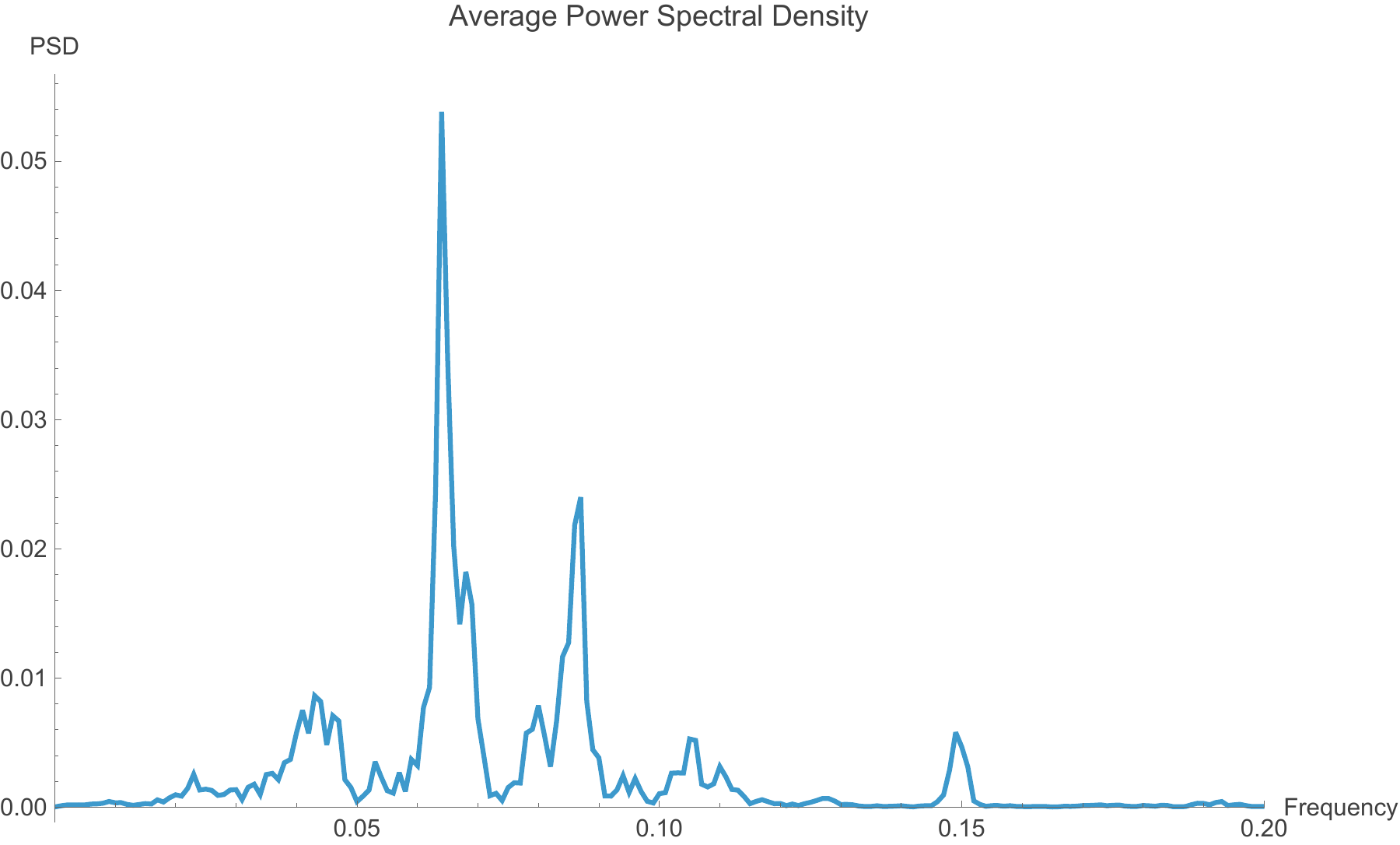}
\caption{PSD ($\tau_2=3.8$)}
\label{PSD38}
\end{subfigure}
\hfill
\begin{subfigure}[b]{0.32\textwidth}
\includegraphics[height=3.5cm]{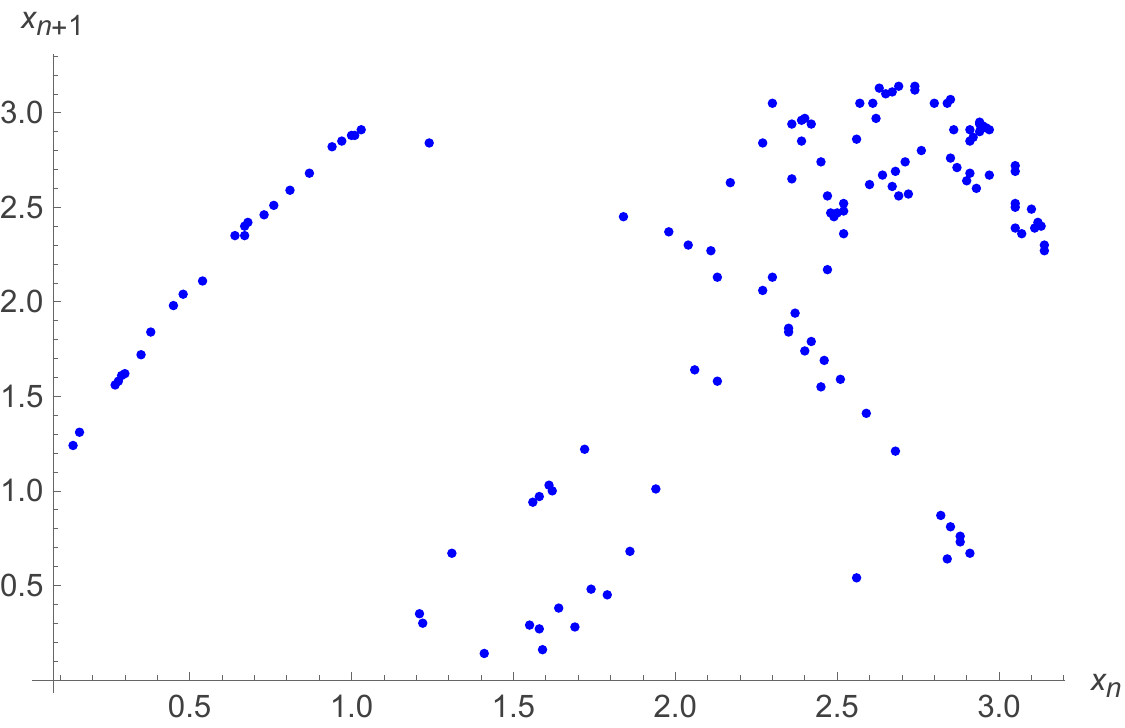}
\caption{Poincar\'e map ($\tau_2=3.8$)}
\label{PR38}
\end{subfigure}

\caption{Comparison of nonlinear diagnostics for periodic and chaotic regimes of system~\eqref{eq:sinx}. The periodic solution at $\tau_2=2.8$ exhibits a discrete line spectrum and isolated points in the Poincar\'e map, whereas the chaotic attractor at $\tau_2=3.8$ is characterized by a broadband power spectrum and a scattered Poincar\'e map.}
\label{fig:comparison}
\end{figure}
}

\subsection{Dynamics for alternative parameter sets}

To demonstrate that the observed delay-induced dynamical behavior is not restricted to a particular choice of parameters, we consider two additional parameter sets and investigate the system dynamics by varying the second delay parameter $\tau_2$.

First, the parameters are chosen as
\[
k=1.5,\qquad \gamma=0.3,\qquad \tau_1=4.5,
\]
while $\tau_2$ is treated as the bifurcation parameter. Similar to the baseline case, the system undergoes a sequence of dynamical transitions from a stable equilibrium to periodic oscillations and eventually to chaotic behavior as $\tau_2$ increases. The corresponding largest Lyapunov exponents, listed in Table~\ref{tab:lyap2}, confirm these transitions. Negative values of the largest Lyapunov exponent correspond to stable and periodic solutions, whereas a positive value indicates the onset of chaos. The associated bifurcation diagram is presented in Figure ~\ref{fig:bif2}, which clearly illustrates the successive transitions between periodic and chaotic dynamics.

\begin{table}[h!]
    \centering
    \caption{Largest Lyapunov exponents for $k=1.5$, $\gamma=0.3$, and $\tau_1=4.5$.}
    \label{tab:lyap2}
    \begin{tabular}{c c c}
        \hline
        $\tau_2$ & $\lambda_{\max}$ & Dynamical behavior \\
        \hline
        0.7 & $-0.224775$ & Stable equilibrium \\
        2.4 & $-0.023890$ & Periodic oscillations \\
        3.0 & $7.517371$ & Strong chaos \\
        \hline
    \end{tabular}
\end{table}
{

As a second example, we consider the parameter set
\[
k=2,\qquad \gamma=0.3,\qquad \tau_1=3.5,
\]
and again vary the delay parameter $\tau_2$. The corresponding bifurcation diagram is shown in Figure \ref{fig:bif3}. Similar to the previous parameter sets, the system exhibits alternating regions of periodic and chaotic behavior as $\tau_2$ is increased. The repeated appearance of chaotic bands interspersed with periodic windows demonstrates that the second delay continues to play a crucial role in shaping the long-term dynamics over a substantially different parameter regime.
\begin{figure}[h!]
    \centering
    \includegraphics[scale=0.4]{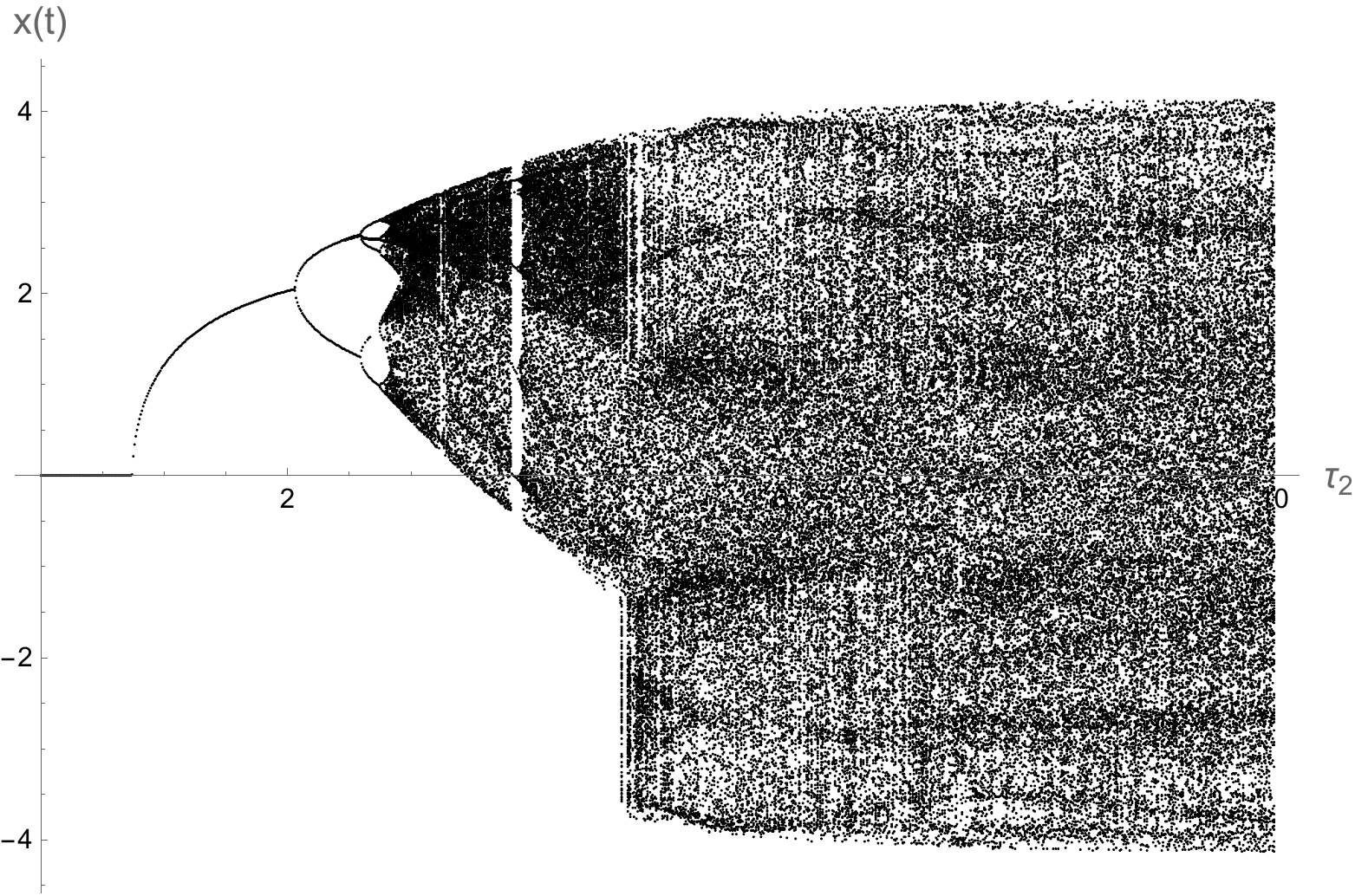}
    \caption{Bifurcation diagram with respect to $\tau_2$ for $k=1.5$, $\gamma=0.3$, and $\tau_1=4.5$.}
    \label{fig:bif2}
\end{figure}

The bifurcation diagrams in Figures ~\ref{fig:bif2} and~\ref{fig:bif3} demonstrate that the qualitative dynamical features observed for the baseline parameter set persist under significant variations of the system parameters. In both cases, the second delay $\tau_2$ acts as an effective bifurcation parameter, generating transitions between stable, periodic, and chaotic dynamics. These results indicate that the delay-induced chaotic behavior is robust and is an intrinsic characteristic of the proposed nonlinear two-delay system rather than an isolated phenomenon associated with a specific parameter choice.
\begin{figure}[h!]
    \centering
    \includegraphics[scale=0.4]{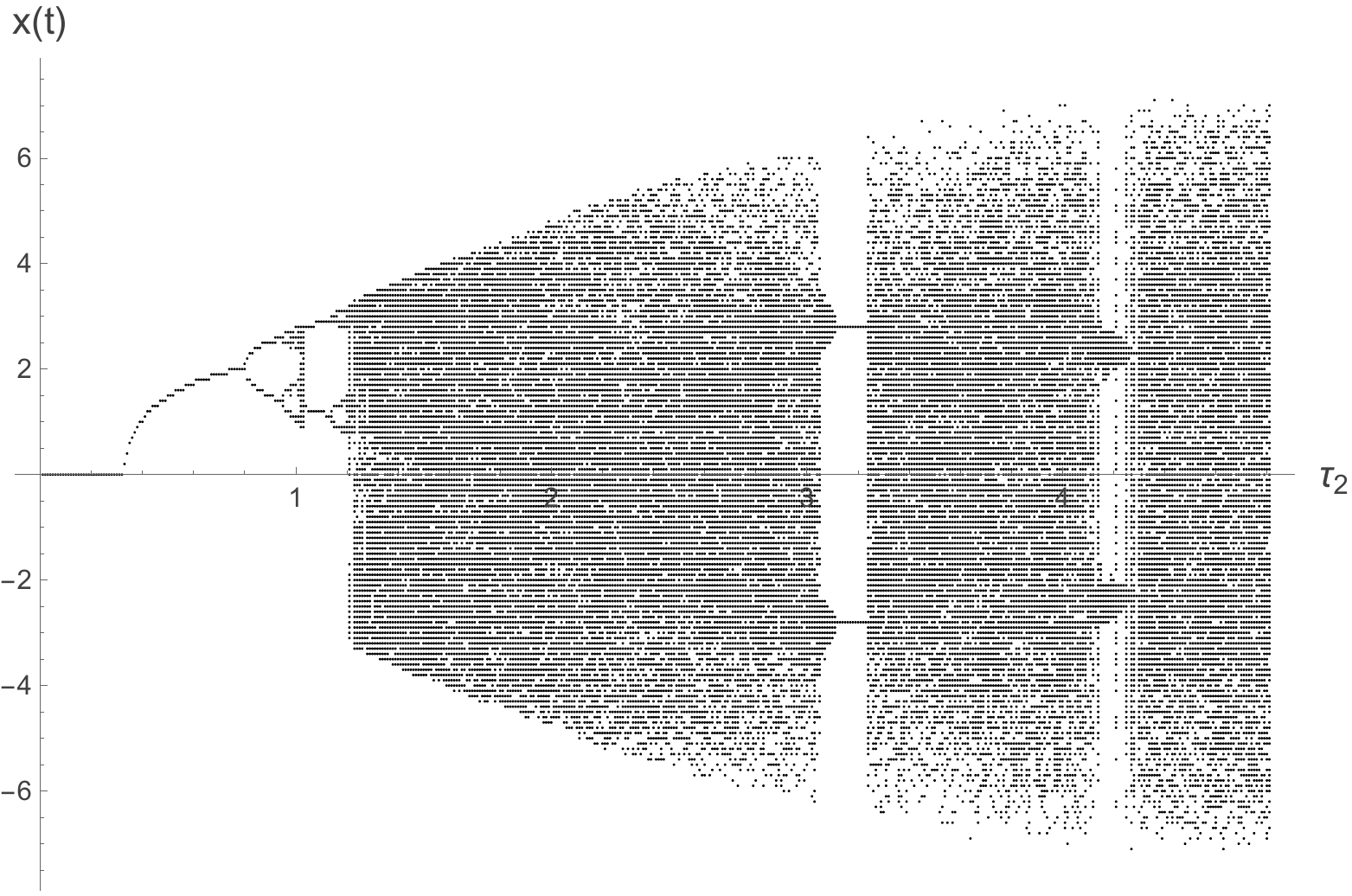}
    \caption{Bifurcation diagram with respect to $\tau_2$ for $k=2$, $\gamma=0.3$, and $\tau_1=3.5$.}
    \label{fig:bif3}
\end{figure}

}

{
\section{Parameter Sensitivity Analysis}
\label{sec:sensitivity}

To study the influence of the system parameters on the dynamics, a one-at-a-time sensitivity analysis is performed. In each case, one parameter is varied while the remaining parameters are fixed at their baseline values,
\[
(k,\gamma,\tau_1,\tau_2)=(1,\,0.1,\,5.1,\,3.8),
\]
for which the system exhibits chaotic behavior.

We first study the effect of the feedback strength $k$. Throughout this study,
$\gamma=0.1$, $\tau_1=5.1$, and $\tau_2=3.8$ are fixed, while $k$ is varied over
$0\le k\le2$.
Figure~\ref{fig:bif_k} shows the bifurcation diagram with respect to the feedback strength $k$. For small values of $k$, the system exhibits stable periodic oscillations. As $k$ increases, the periodic solution loses stability and chaotic dynamics appear near $k\approx0.95$. Around $k\approx1.1$, the system returns to a periodic state, forming a periodic window inside the chaotic region. This periodic behavior persists up to about $k\approx1.6$, after which another transition to chaos occurs. For larger values of $k$, the chaotic attractor expands, indicating increasingly complex dynamics.\\
\begin{figure}[H]
\centering
\includegraphics[width=0.75\textwidth]{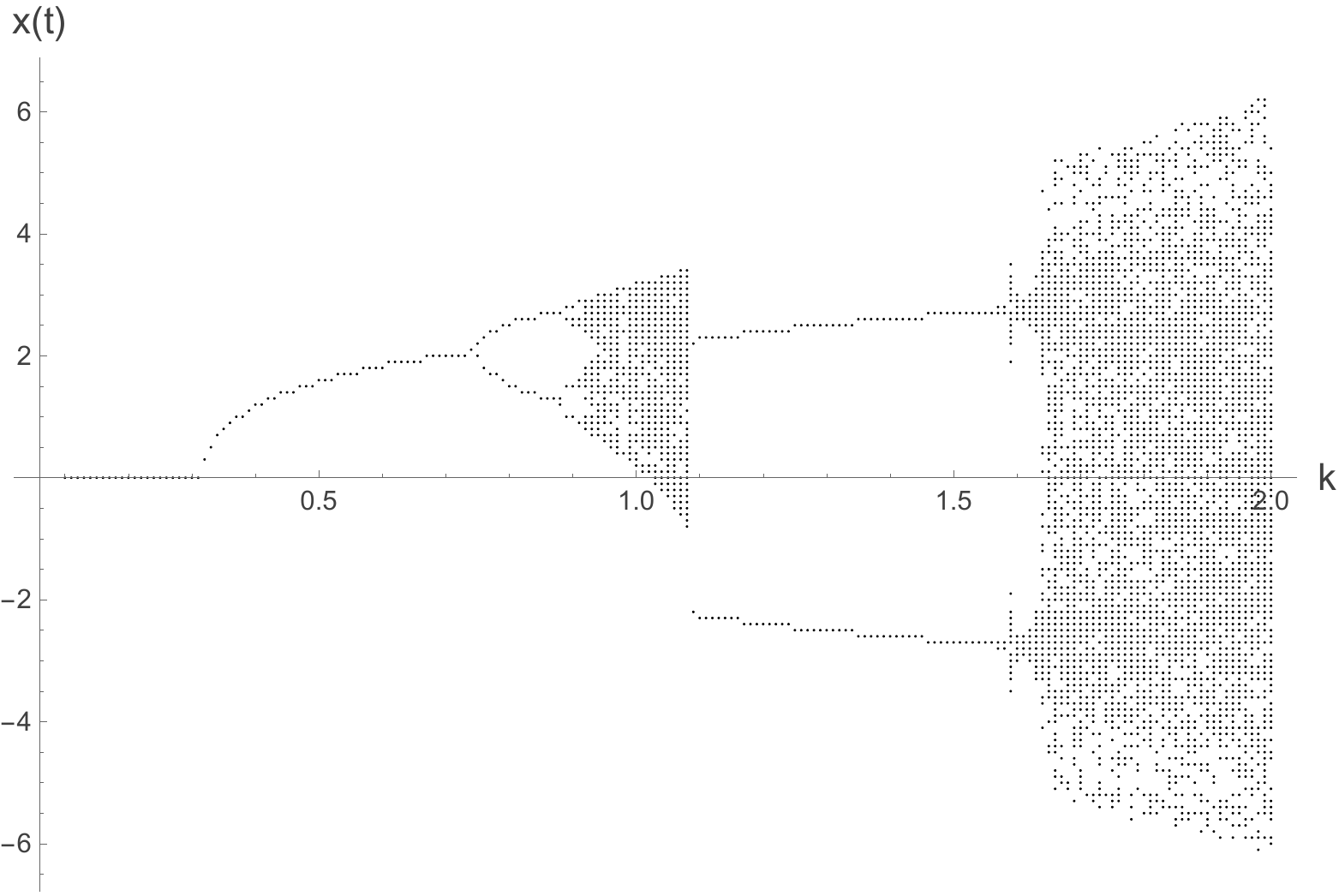}
\caption{Bifurcation diagram obtained by varying  $k$ while keeping $\gamma=0.1$, $\tau_1=5.1$, and $\tau_2=3.8$ fixed.}
\label{fig:bif_k}
\end{figure}

\begin{figure}[H]
\centering

\begin{subfigure}[b]{0.19\textwidth}
\centering
\includegraphics[width=\linewidth]{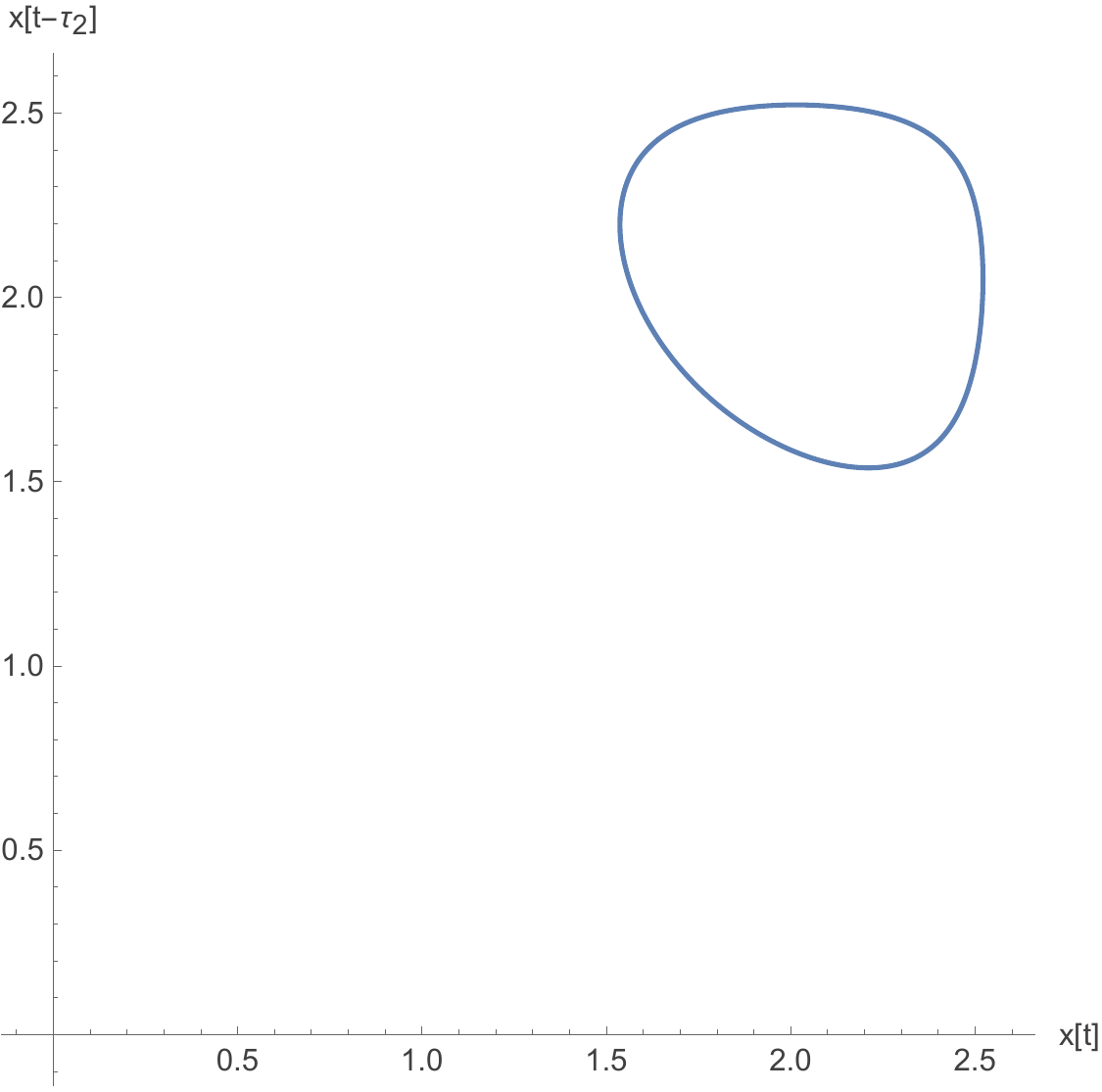}
\caption{$k=0.8$}
\end{subfigure}
\hfill
\begin{subfigure}[b]{0.19\textwidth}
\centering
\includegraphics[width=\linewidth]{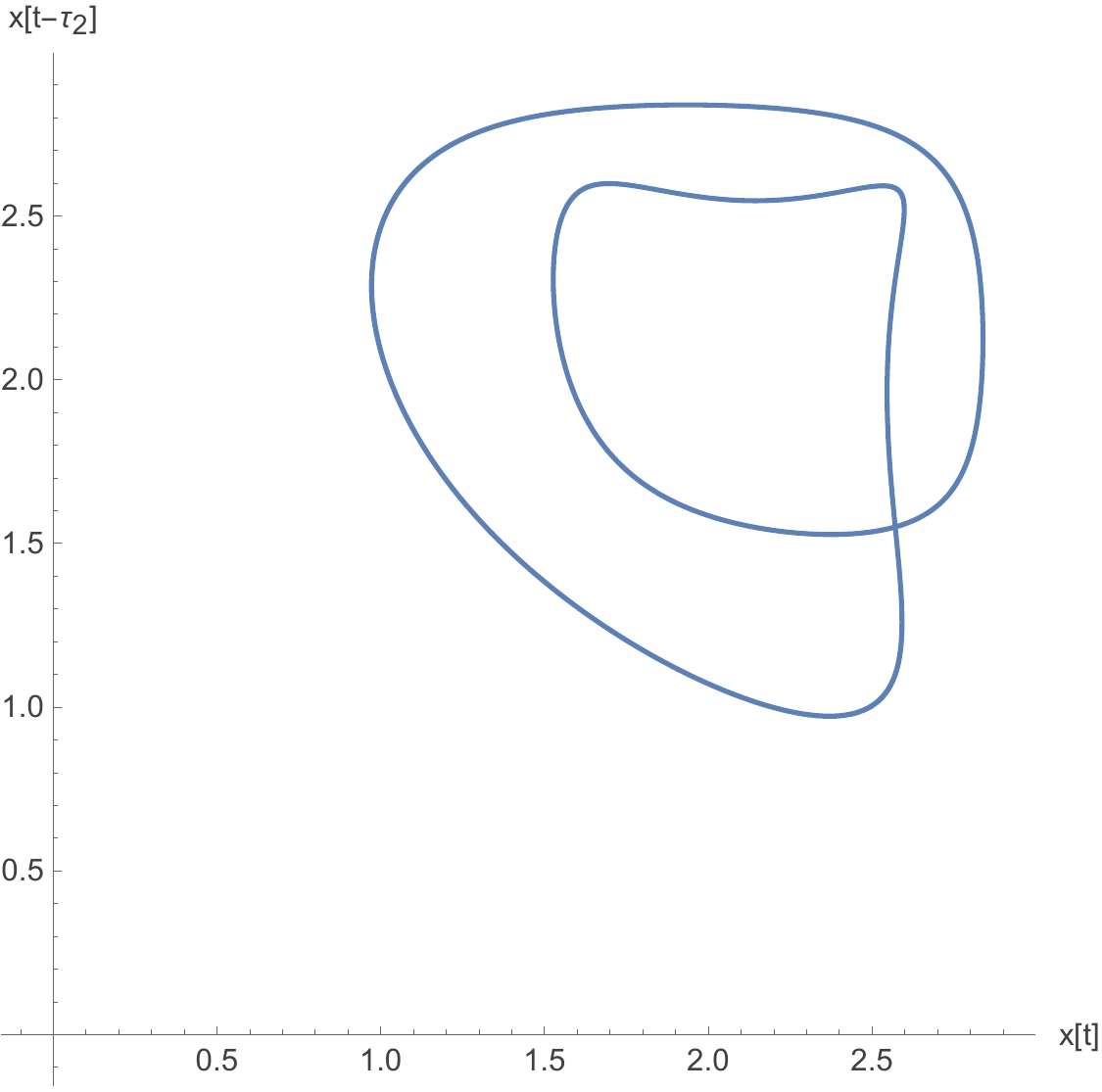}
\caption{$k=0.9$}
\end{subfigure}
\hfill
\begin{subfigure}[b]{0.19\textwidth}
\centering
\includegraphics[width=\linewidth]{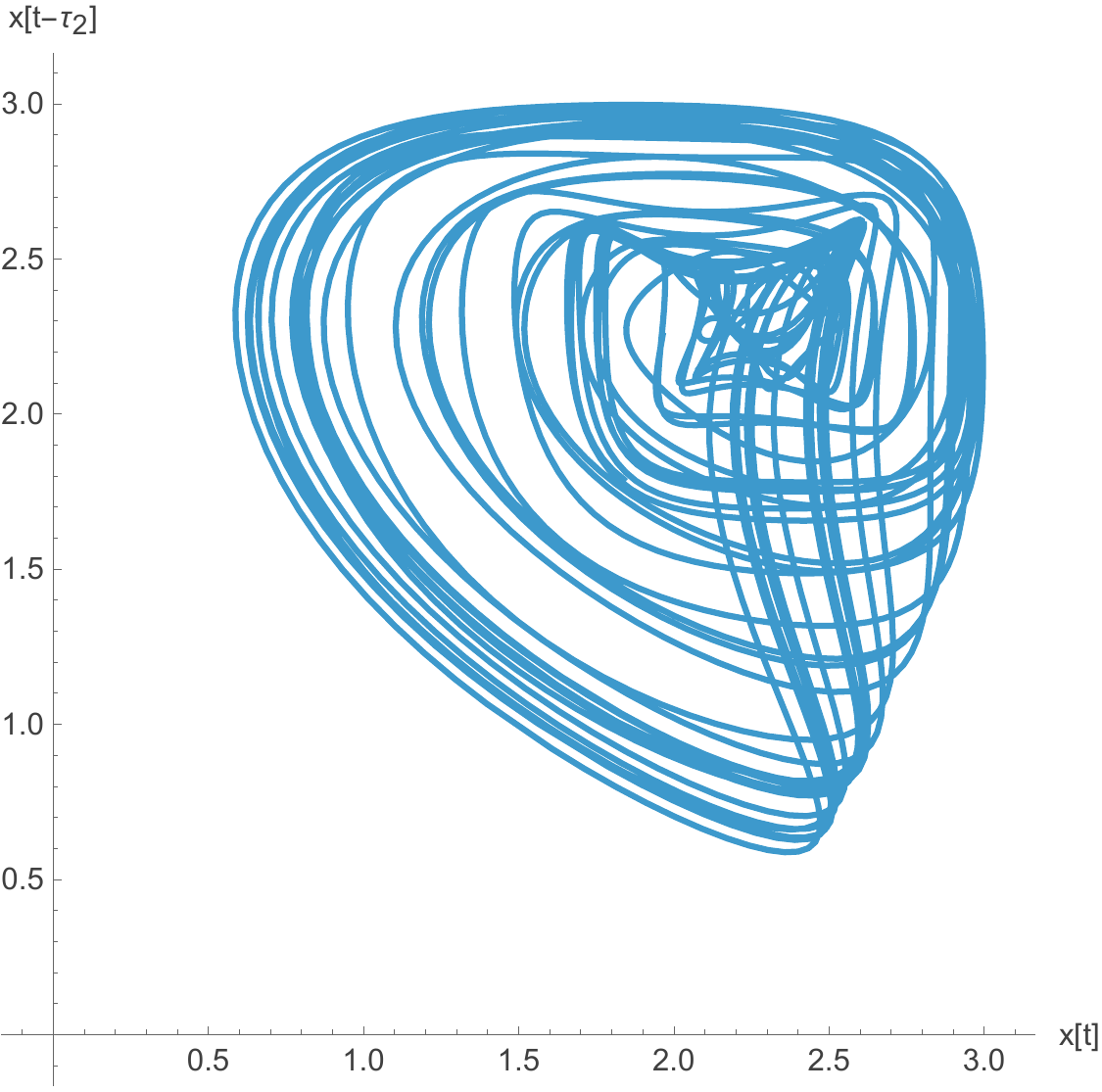}
\caption{$k=0.95$}
\end{subfigure}
\hfill
\begin{subfigure}[b]{0.19\textwidth}
\centering
\includegraphics[width=\linewidth]{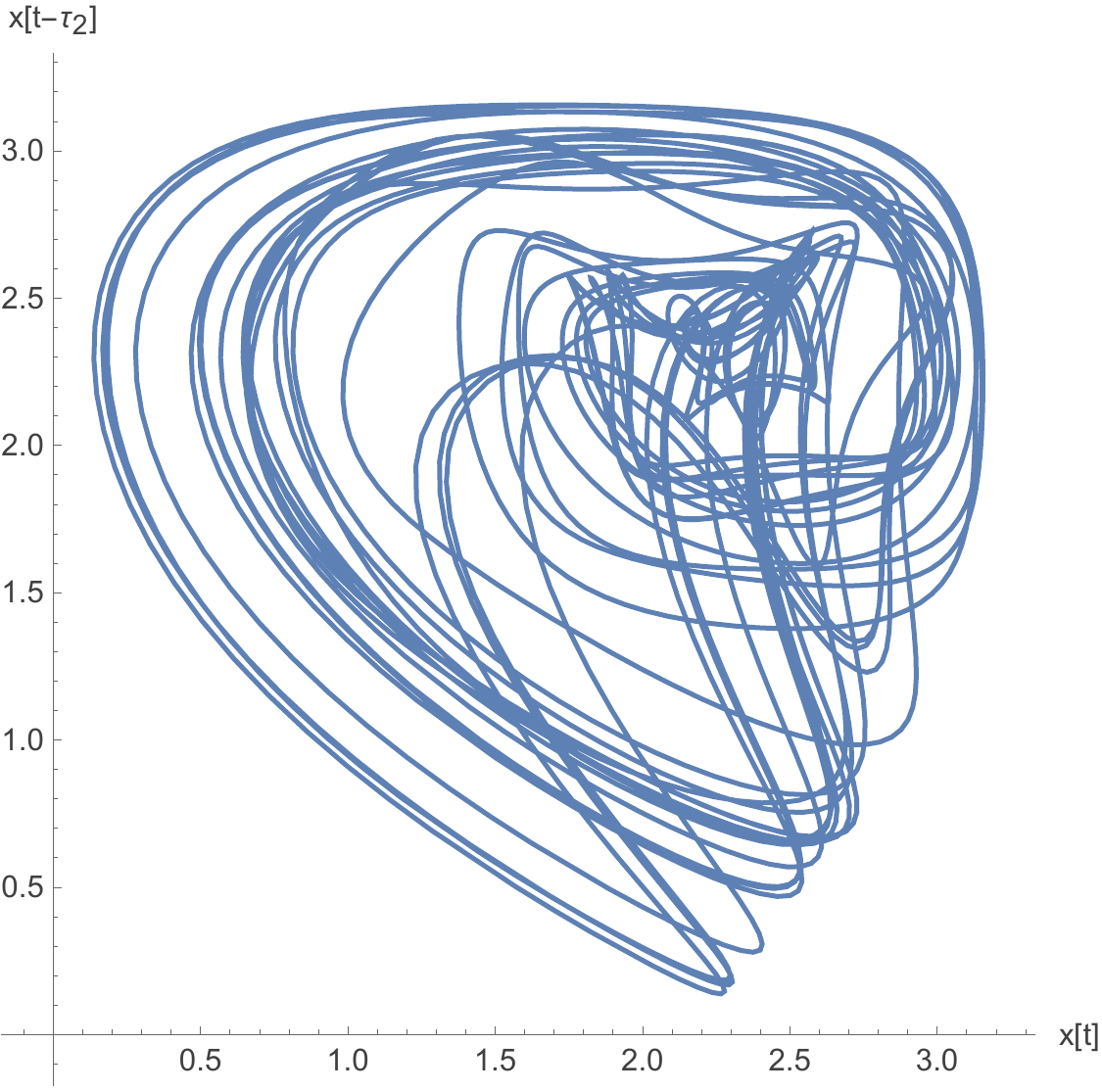}
\caption{$k=1.0$}
\end{subfigure}
\hfill
\begin{subfigure}[b]{0.19\textwidth}
\centering
\includegraphics[width=\linewidth]{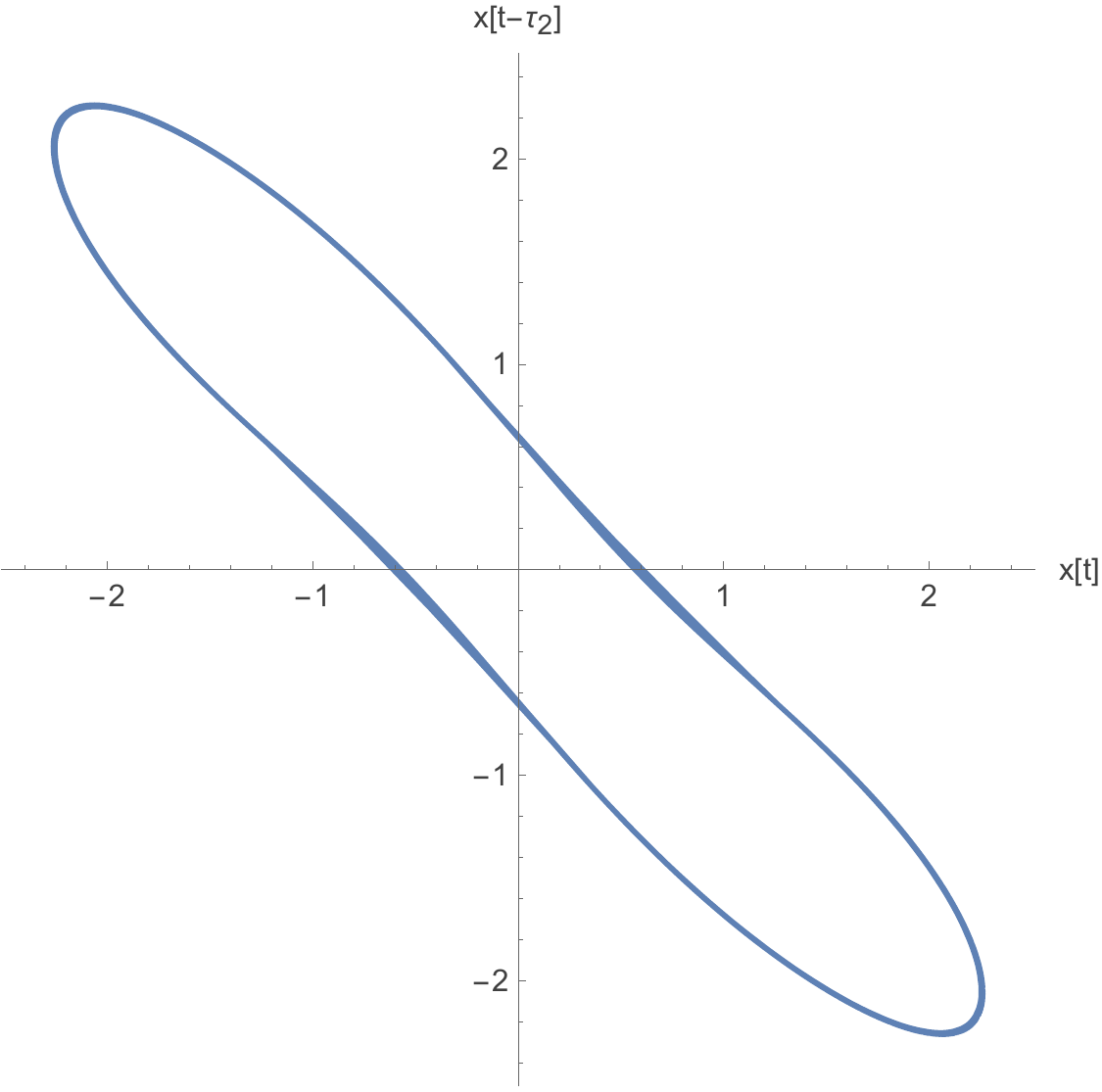}
\caption{$k=1.1$}
\end{subfigure}

\caption{Representative phase portraits illustrating the transition from periodic motion to chaos and the subsequent reappearance of a periodic attractor as  $k$ increases.}
\label{fig:k_sensitivity1}
\end{figure}
\begin{figure}[H]
\centering

\begin{subfigure}[b]{0.19\textwidth}
\centering
\includegraphics[width=\linewidth]{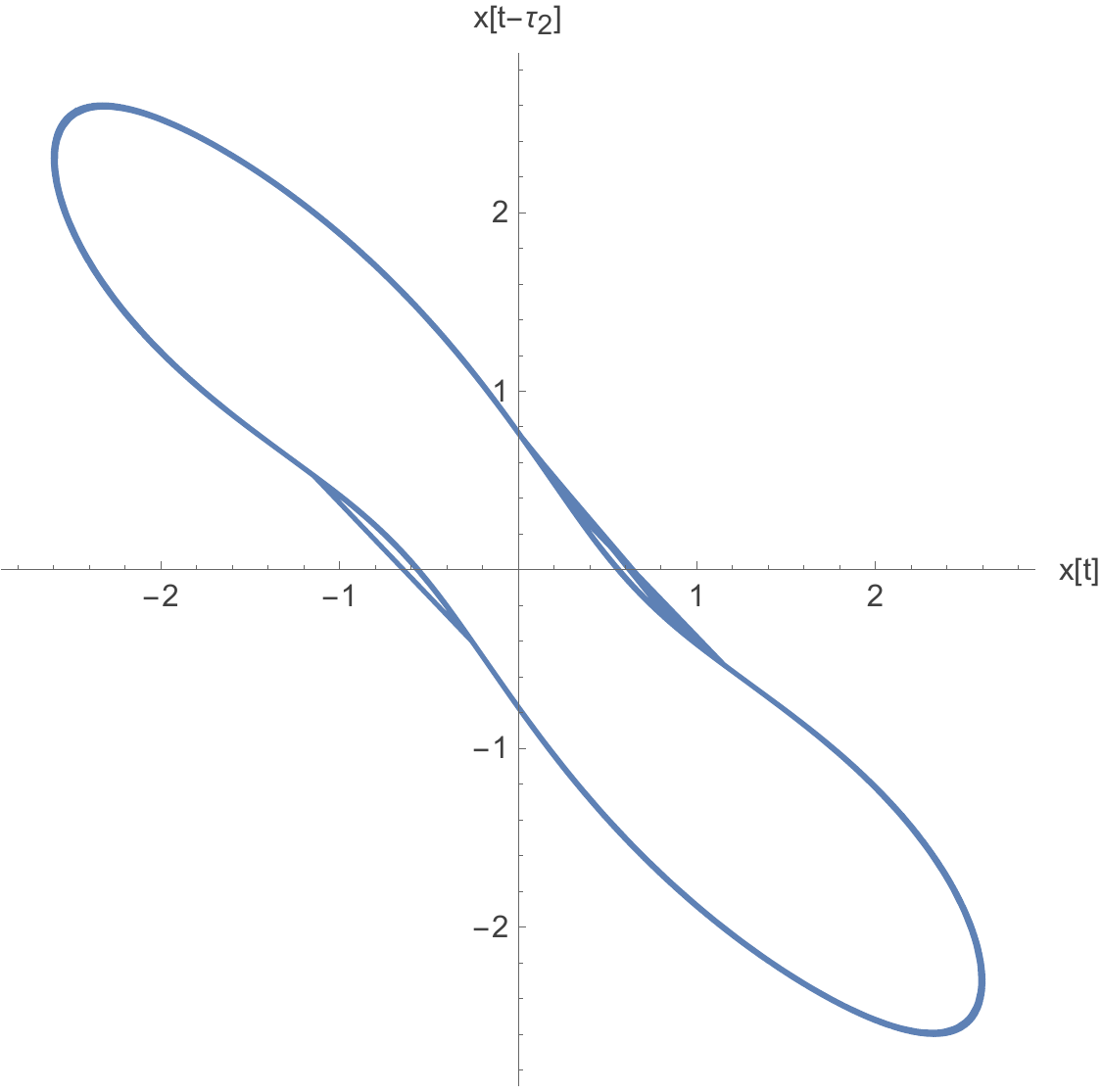}
\caption{$k=1.4$}
\end{subfigure}
\hfill
\begin{subfigure}[b]{0.19\textwidth}
\centering
\includegraphics[width=\linewidth]{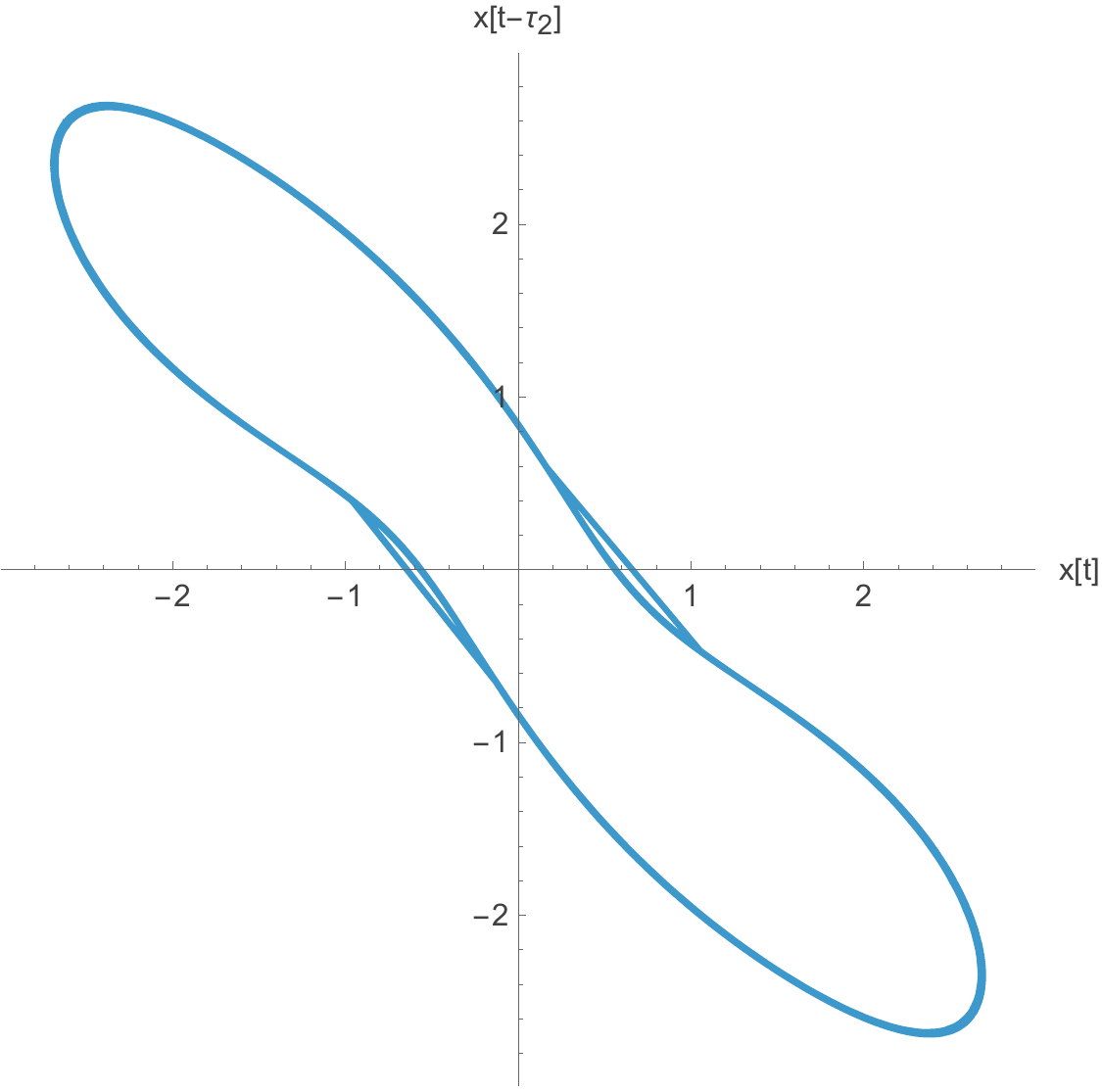}
\caption{$k=1.5$}
\end{subfigure}
\hfill
\begin{subfigure}[b]{0.19\textwidth}
\centering
\includegraphics[width=\linewidth]{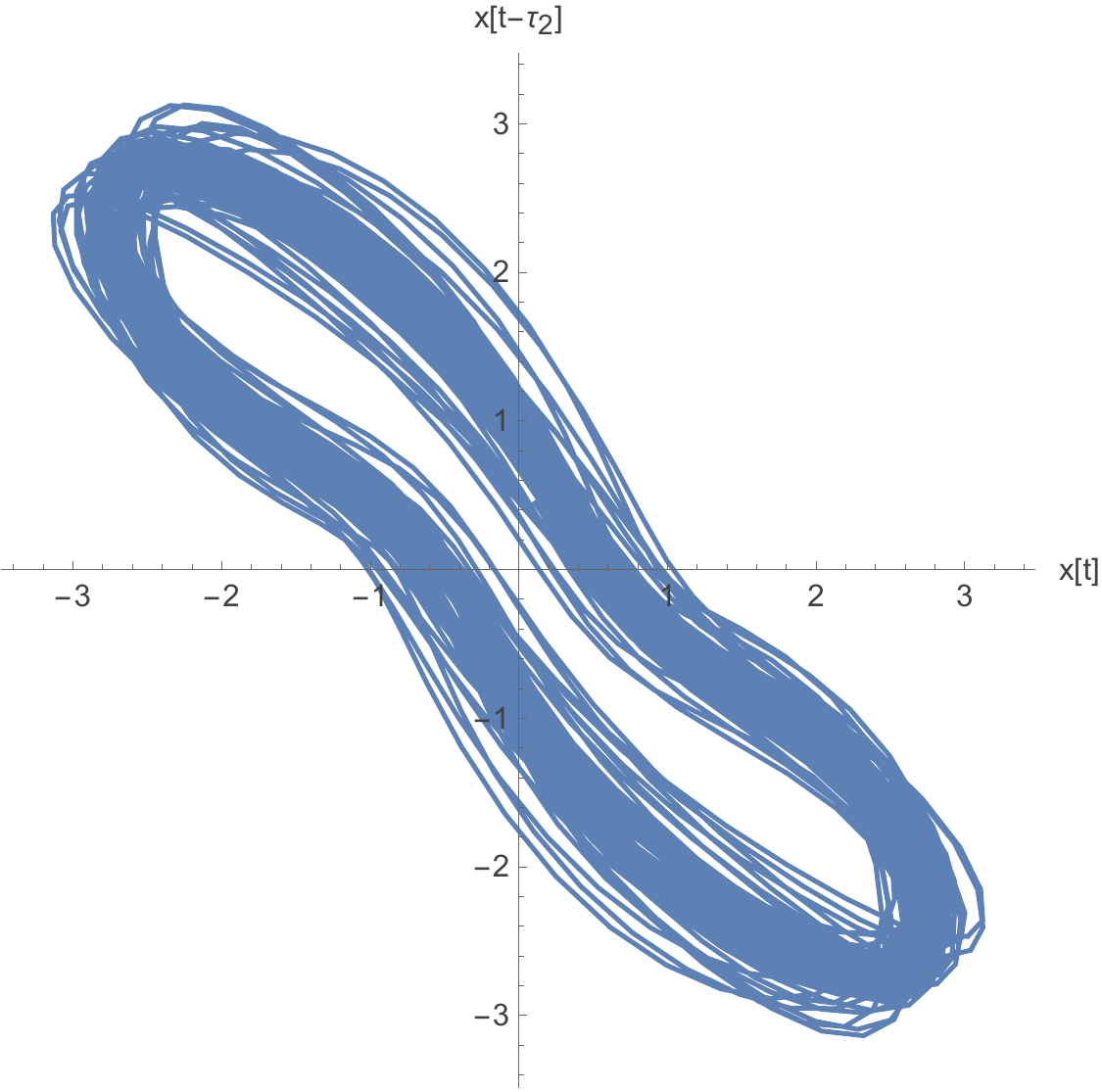}
\caption{$k=1.6$}
\end{subfigure}
\hfill
\begin{subfigure}[b]{0.19\textwidth}
\centering\
\includegraphics[width=\linewidth]{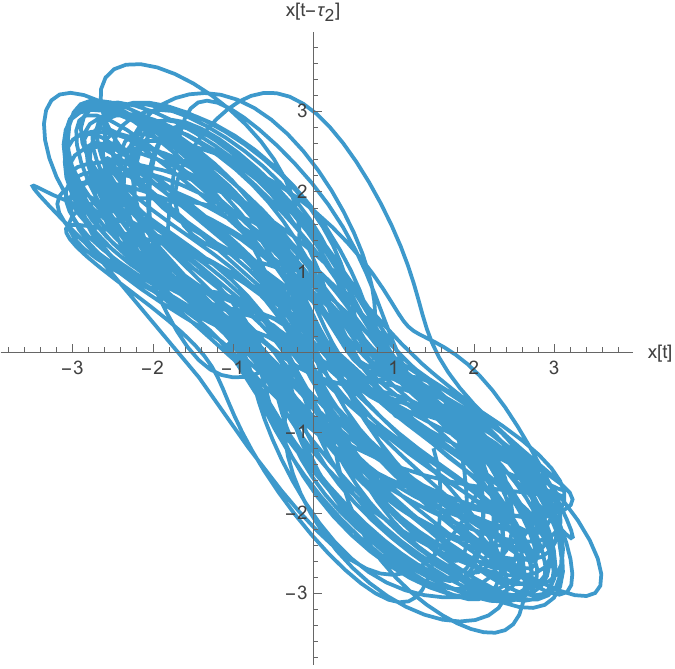}
\caption{$k=1.65$}
\end{subfigure}
\hfill
\begin{subfigure}[b]{0.19\textwidth}
\centering
\includegraphics[width=\linewidth]{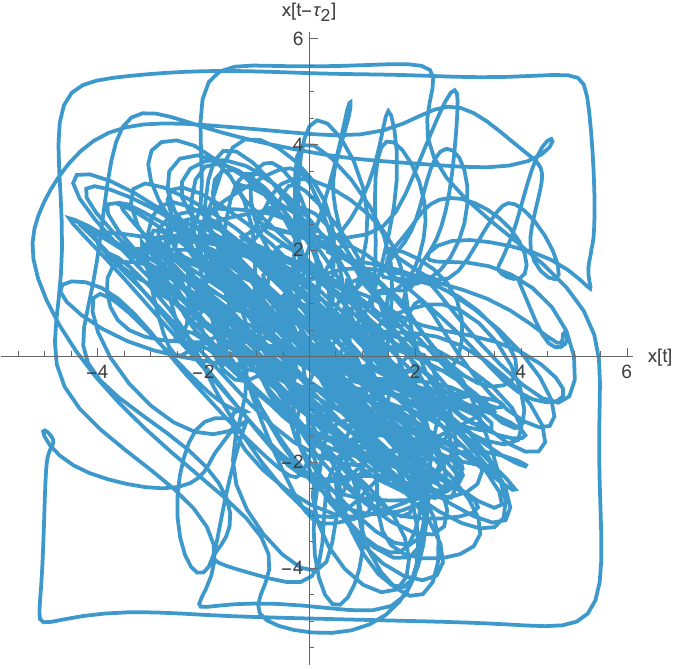}
\caption{$k=1.8$}
\end{subfigure}

\caption{Representative phase portraits illustrating the second transition from periodic to chaotic dynamics for larger values of $k$.}
\label{fig:k_sensitivity2}
\end{figure}

Representative phase portraits for different values of $k$ are shown in Figures~\ref{fig:k_sensitivity1} and~\ref{fig:k_sensitivity2}. Figure~\ref{fig:k_sensitivity1} agrees with the bifurcation diagram. The system is periodic for $k=0.8$ and $k=0.9$. Chaos appears near $k=0.95$ and becomes fully developed at the baseline value $k=1$. Increasing $k$ to $1.1$ restores periodic oscillations, confirming the existence of a periodic window.\\

Figure~\ref{fig:k_sensitivity2} shows the dynamics beyond the periodic window. The system remains periodic for $k=1.4$ and $k=1.5$. Near $k=1.6$, the periodic orbit loses stability and chaos reappears. For $k=1.65$ and $k=1.8$, the attractor becomes more irregular and spreads over a larger region of the phase space.\\
Next, we examine the effect of the damping coefficient $\gamma$. Here, $k=1$, $\tau_1=5.1$, and $\tau_2=3.8$ are fixed while $\gamma$ varies over $(0,0.2)$.
Figure~\ref{bifg} shows the bifurcation diagram with respect to $\gamma$. For small values of $\gamma$, the system exhibits periodic oscillations. Near $\gamma\approx0.055$, the periodic orbit loses stability and chaos appears. As $\gamma$ increases further, the chaotic region gradually shrinks, indicating weaker chaotic oscillations. Finally, for sufficiently large values of $\gamma$, the system returns to periodic behavior, showing that increased damping suppresses chaos.\\
\begin{figure}[H]
    \centering
    \includegraphics[width=0.75\textwidth]{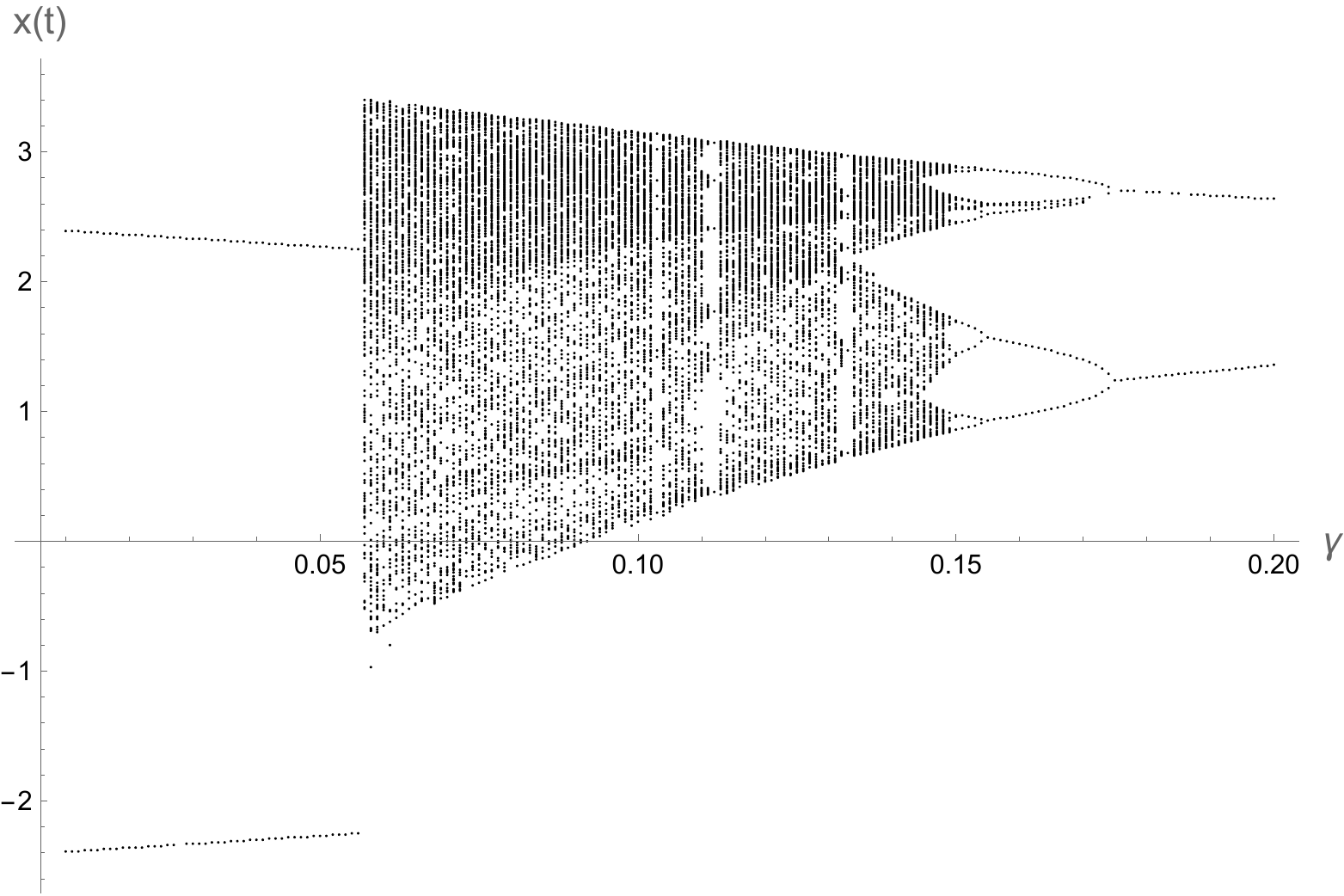}
    \caption{Bifurcation diagram obtained by varying  $\gamma$ while keeping $k=1$, $\tau_1=5.1$, and $\tau_2=3.8$ fixed.}
    \label{bifg}
\end{figure}
The phase portraits in Figure~\ref{fig:gamma_sensitivity} confirm the bifurcation diagram. The system is periodic for $\gamma=0$ and $0.05$. Chaos first appears near $\gamma=0.055$ and becomes fully developed at $\gamma=0.1$. Increasing $\gamma$ further reduces the size of the chaotic attractor, and by $\gamma=0.2$ the system again exhibits periodic oscillations.
\begin{figure}[H]
\centering

\begin{subfigure}[b]{0.19\textwidth}
    \centering
    \includegraphics[width=\linewidth]{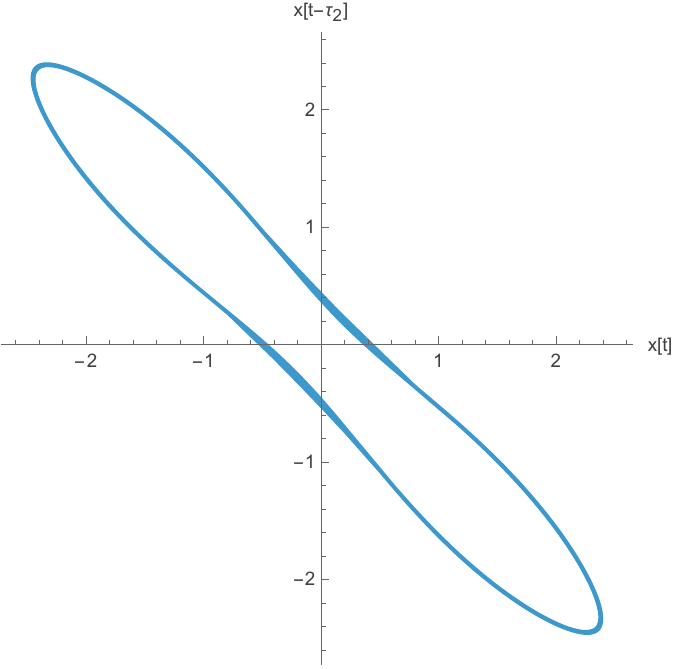}
    \caption{$\gamma=0$}
\end{subfigure}
\hfill
\begin{subfigure}[b]{0.19\textwidth}
    \centering
    \includegraphics[width=\linewidth]{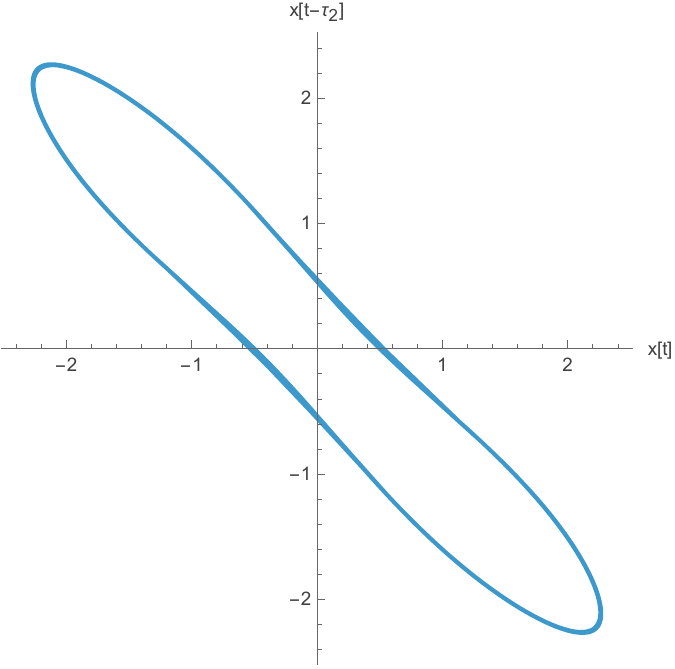}
    \caption{$\gamma=0.05$}
\end{subfigure}
\hfill
\begin{subfigure}[b]{0.19\textwidth}
    \centering
    \includegraphics[width=\linewidth]{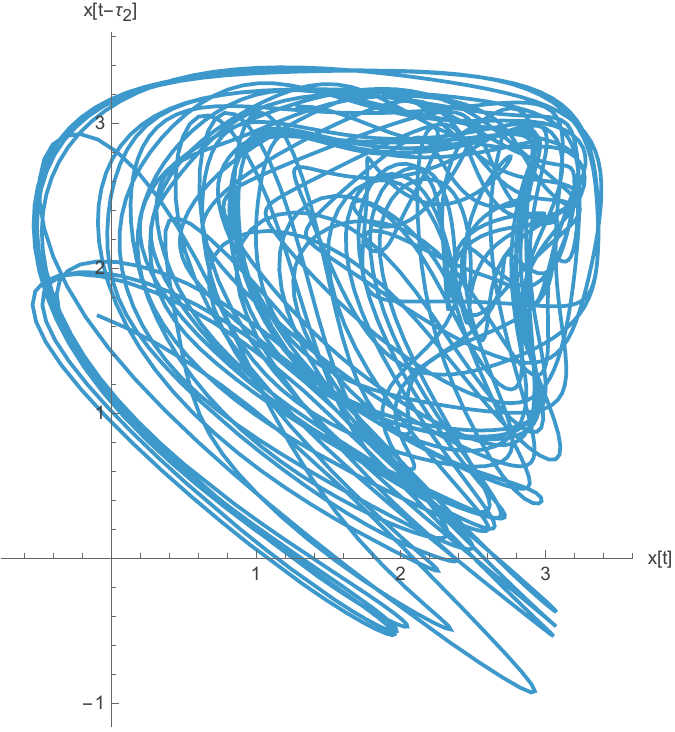}
    \caption{$\gamma=0.055$}
\end{subfigure}
\hfill
\begin{subfigure}[b]{0.19\textwidth}
    \centering
    \includegraphics[width=\linewidth]{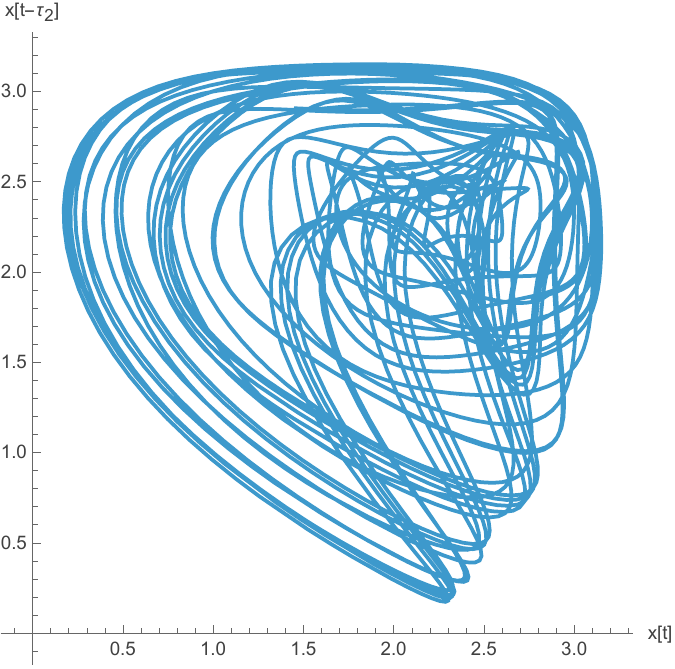}
    \caption{$\gamma=0.10$}
\end{subfigure}
\hfill
\begin{subfigure}[b]{0.19\textwidth}
    \centering
    \includegraphics[width=\linewidth]{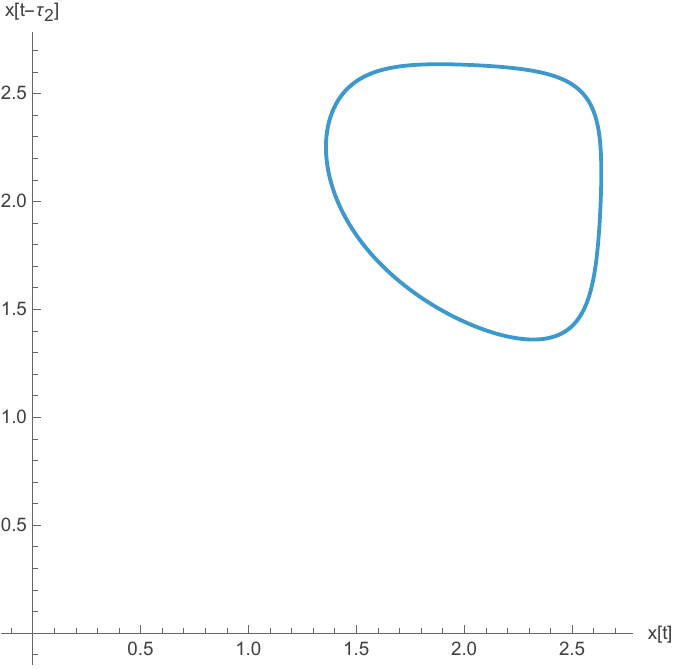}
    \caption{$\gamma=0.20$}
\end{subfigure}

\caption{Representative phase portraits illustrating the effect of $\gamma$ on the system dynamics for fixed $k=1$, $\tau_1=5.1$, and $\tau_2=3.8$.}
\label{fig:gamma_sensitivity}
\end{figure}

Finally, we study the effect of the first delay $\tau_1$. Throughout this subsection, $k=1$, $\gamma=0.1$, and $\tau_2=3.8$ are fixed, while $\tau_1$ varies over $3\le\tau_1\le7$.
Figure~\ref{fig:bif_tau1} shows the bifurcation diagram with respect to $\tau_1$. For smaller values of $\tau_1$, the system exhibits periodic oscillations. As $\tau_1$ increases, the periodic solution loses stability and chaotic dynamics appear. Several narrow periodic windows are embedded within the chaotic region, indicating repeated transitions between periodic and chaotic behavior. For larger values of $\tau_1$, chaos remains dominant, although isolated periodic windows continue to occur.\\
\begin{figure}[H]
\centering
\includegraphics[width=0.75\textwidth]{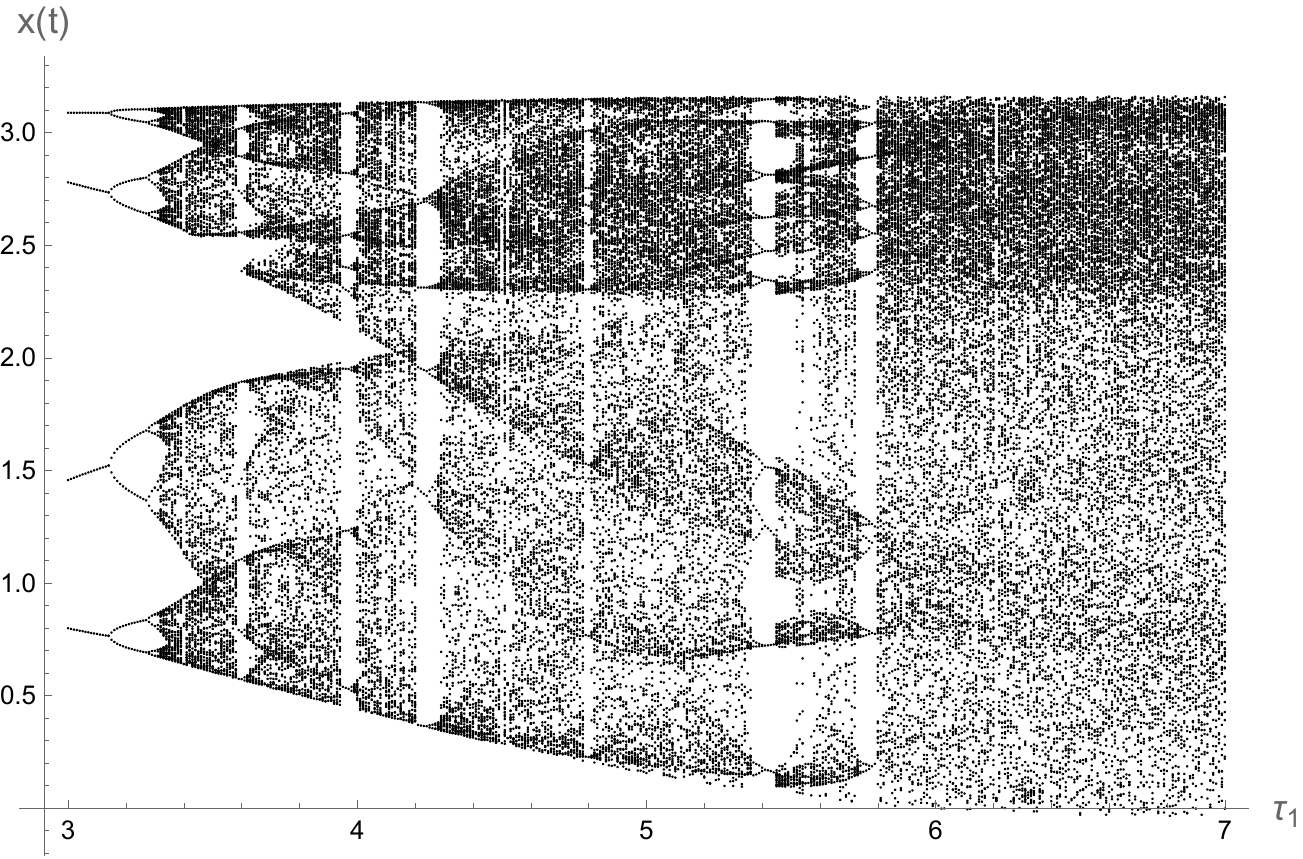}
\caption{Bifurcation diagram obtained by varying the delay parameter $\tau_1$ while keeping $k=1$, $\gamma=0.1$, and $\tau_2=3.8$ fixed.}
\label{fig:bif_tau1}
\end{figure}

The phase portraits in Figure~\ref{fig:tau1_sensitivity} are consistent with the bifurcation diagram. Periodic oscillations are observed for smaller values of $\tau_1$ (Figure (\ref{tau13})), while increasing $\tau_1$ leads to chaotic motion (see Figure (\ref{tau139})). Further changes in $\tau_1$ produce alternating periodic and chaotic attractors, corresponding to the periodic windows seen in the bifurcation diagram (Figures ~\ref{fig:tau1_sensitivity}\subref{tau1425}--
\ref{fig:tau1_sensitivity}\subref{tau154}). For larger values of $\tau_1$, the attractor again becomes chaotic (Figure (\ref{tau165})).

\begin{figure}[H]
\centering

\begin{subfigure}[b]{0.25\textwidth}
    \centering
    \includegraphics[width=\linewidth]{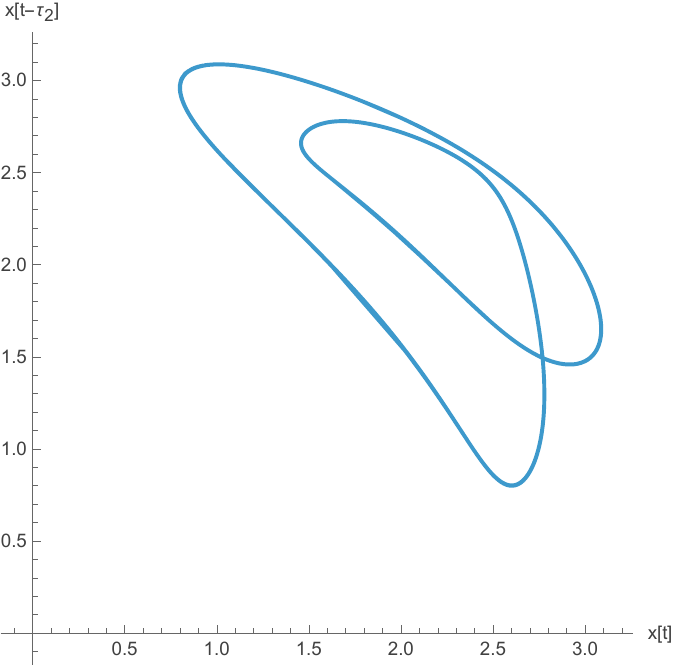}
    \caption{$\tau_1=3$}
	\label{tau13}
\end{subfigure}
\hfill
\begin{subfigure}[b]{0.25\textwidth}
    \centering
    \includegraphics[width=\linewidth]{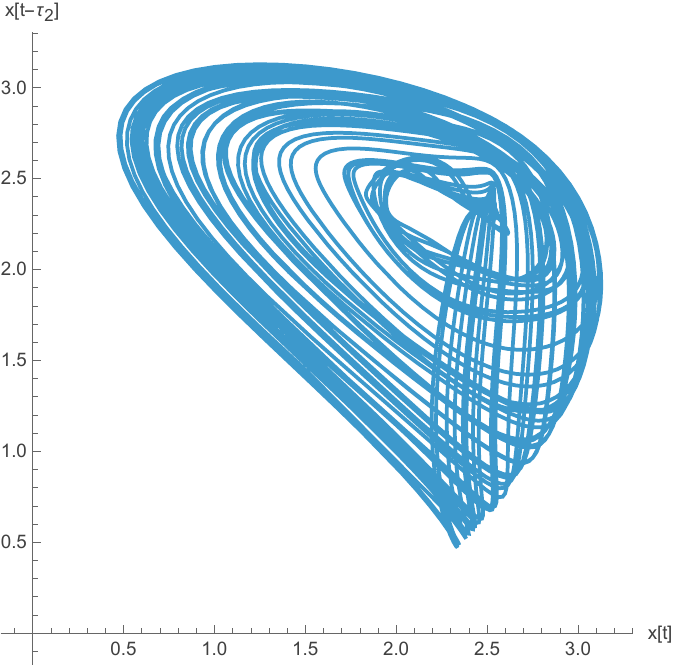}
    \caption{$\tau_1=3.9$}
	\label{tau139}
\end{subfigure}
\hfill
\begin{subfigure}[b]{0.25\textwidth}
    \centering
    \includegraphics[width=\linewidth]{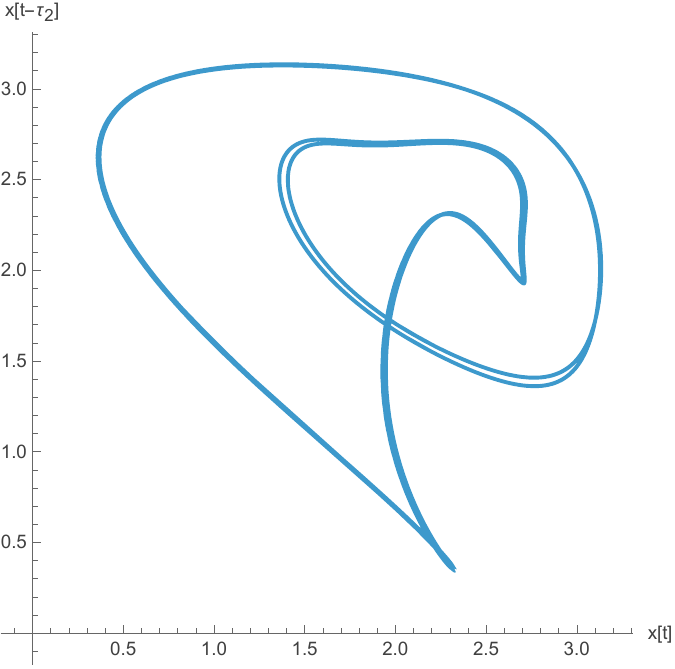}
    \caption{$\tau_1=4.25$}
	\label{tau1425}
\end{subfigure}
\hfill
\begin{subfigure}[b]{0.25\textwidth}
    \centering
    \includegraphics[width=\linewidth]{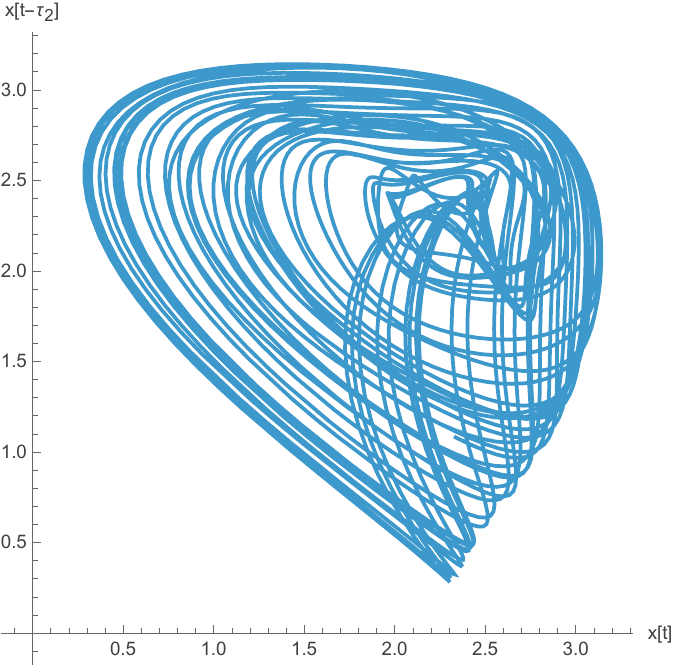}
    \caption{$\tau_1=4.5$}
	\label{tau145}
\end{subfigure}
\hfill
\begin{subfigure}[b]{0.25\textwidth}
    \centering
    \includegraphics[width=\linewidth]{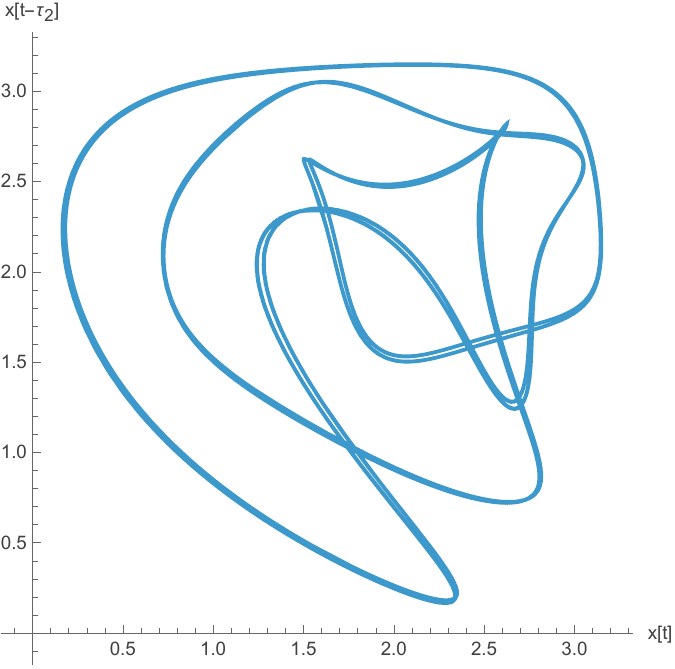}
    \caption{$\tau_1=5.4$}
	\label{tau154}
\end{subfigure}
\hfill
\begin{subfigure}[b]{0.25\textwidth}
    \centering
    \includegraphics[width=\linewidth]{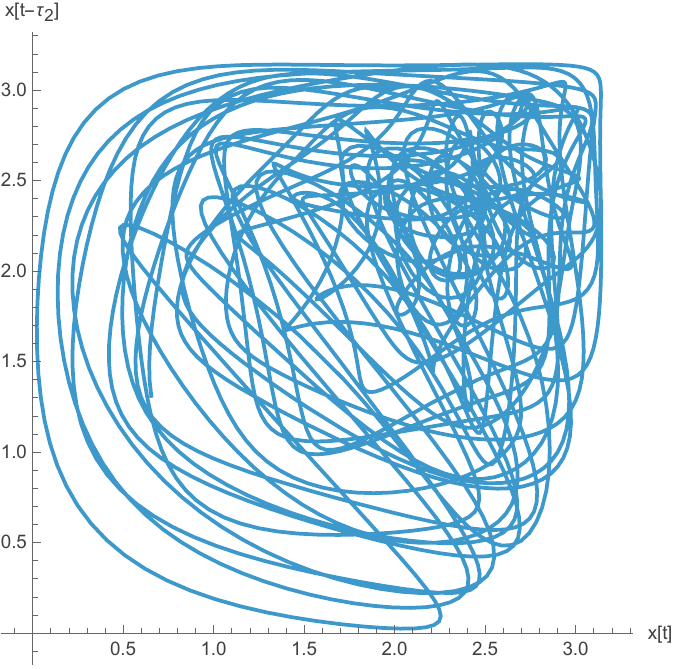}
    \caption{$\tau_1=6.5$}
	\label{tau165}
\end{subfigure}
\caption{Representative phase portraits corresponding to different values of $\tau_1$ illustrating the transitions between periodic and chaotic dynamics.}
\label{fig:tau1_sensitivity}
\end{figure}
}

{
\section{Chaos Control}
\label{sec:control}
In this section, we investigate the suppression of chaotic oscillations exhibited by system~\eqref{eq:sinx} through the introduction of a linear state feedback control applied in a neighborhood of the trivial equilibrium point $x_*=0$. The controlled system is described by
\begin{equation}
	\dot{x}(t)
	= -\gamma x(t)
	+ k\sin\big(x(t - \tau_1)\big)
	- k e^{-\gamma \tau_2}\sin\big(x(t - \tau_1 - \tau_2)\big)
	- \mu x(t),
	\label{eq:control}
\end{equation}
where $\mu>0$ denotes the control gain.}

{
\subsection*{Comparison with other chaos-control approaches}

The controller in~\eqref{eq:control} is intentionally simple: it uses only the current state, introduces no additional delay, and contains one constant gain. Its purpose differs from several established chaos-control strategies. Pyragas delayed feedback applies a signal of the form $K[x(t-T)-x(t)]$ and is non-invasive on a target periodic orbit of period $T$ \cite{pyragas1992continuous}. This is attractive when the objective is to preserve and stabilize an unstable periodic orbit embedded in a chaotic attractor. However, it requires an appropriate feedback delay, usually related to the unknown or measured orbit period, and its success depends on the orbit and gain being within a stabilizable range. In the present study the target is instead the known equilibrium $x_*=0$; hence, period estimation and an additional delayed measurement are unnecessary.

Adaptive controllers update one or more gains online and are particularly useful when system parameters are uncertain or vary with time \cite{tian1998adaptive,rezaie2011adaptive}. Their greater robustness to parameter changes is obtained at the cost of an adaptation law, additional tuning parameters, and a more involved Lyapunov or Lyapunov--Krasovskii analysis. Sliding-mode controllers can provide strong robustness against matched uncertainty and disturbances and have also been developed for scalar delayed chaotic systems \cite{vasegh2009sliding}. They require the design of a sliding surface and a reaching law, and discontinuous switching may produce chattering unless a boundary layer or a higher-order sliding algorithm is introduced. By contrast, $-\mu x(t)$ is smooth and does not generate switching-induced chattering, although it is not adaptive and its stability margin can deteriorate if the model parameters depart substantially from the values used in the gain calculation.

Table~\ref{tab:control_comparison} summarizes these differences. The advantage claimed for the proposed method is therefore not universal superiority, but low implementation complexity and a directly computable local stability boundary for the equilibrium-control objective considered here. For a prescribed parameter set, the critical gain $\mu_*$ follows from the imaginary-root conditions of the closed-loop characteristic equation. The same design procedure is demonstrated for two distinct parameter sets below, whereas robust performance under unknown parameters or external disturbances would require an adaptive or robust extension.
}
{
\begin{table}[H]
	\centering
	\small
	\caption{Qualitative comparison of chaos-control approaches for delayed systems.}
	\label{tab:control_comparison}
	\begin{tabular}{p{0.15\textwidth} p{0.24\textwidth} p{0.25\textwidth} p{0.25\textwidth}}
		\hline
		Method & Typical objective and control signal & Main advantages & Main requirements or limitations \\
		\hline
		Proposed linear feedback & Stabilize $x_*=0$ using $u(t)=-\mu x(t)$ & One smooth constant gain; current-state measurement only; no added delay; $\mu_*$ obtained from the characteristic equation & Model-dependent gain; local stability design; no online accommodation of uncertainty \\
		Pyragas feedback & Stabilize an unstable periodic orbit using $K[x(t-T)-x(t)]$ & Non-invasive on the target orbit; preserves the desired periodic motion & Requires selection of $T$ and $K$; effectiveness depends on the target orbit and delay matching \\
		Adaptive control & Stabilize a state or orbit with gains updated online & Can accommodate uncertain or time-varying parameters & Requires an adaptation law, additional tuning, and parameter or error monitoring \\
		Sliding-mode control & Drive the state to a designed sliding manifold & Robust to matched uncertainties and disturbances; potentially rapid convergence & Requires a sliding surface and uncertainty bounds; switching may cause chattering \\
		\hline
	\end{tabular}
\end{table}

Linearizing system~\eqref{eq:control} about $x=0$ uses the same expansion $\sin x=x+O(x^3)$ as in~\eqref{eq:linearized_origin}. The additional control term changes only the instantaneous coefficient, giving
\begin{equation}
	\dot{x}(t)
	= -(\gamma+\mu)x(t)
	+ kx(t-\tau_1)
	- k e^{-\gamma\tau_2}x(t-\tau_1-\tau_2).
	\label{lin}
\end{equation}

Substituting $x(t)=e^{\lambda t}$ into~\eqref{lin}, dividing by $e^{\lambda t}$, and collecting all terms gives
\begin{equation}
	\lambda
	= -(\gamma+\mu)
	+ k e^{-\lambda\tau_1}
	- k e^{-\gamma\tau_2-\lambda(\tau_1+\tau_2)}.
	\label{char}
\end{equation}
or, equivalently,
\begin{equation}
	\lambda+\gamma+\mu
	-k e^{-\lambda\tau_1}
	+k e^{-\gamma\tau_2}e^{-\lambda(\tau_1+\tau_2)}=0.
	\label{eq:controlled_characteristic_zero}
\end{equation}

For the parameter values used in the chaotic dynamics analysis, namely
\[
k=1,\quad \gamma=0.1,\quad \tau_1=5.1,\quad \tau_2=3.8,
\]
the uncontrolled system ($\mu=0$) exhibits chaotic behavior (see Figure (\ref{fig:control_chaos})). To determine a suitable value of the control gain $\mu$ that suppresses chaos, we analyze the stability boundary of the characteristic equation~\eqref{char} by imposing the condition $\lambda=i\omega$ $(\omega>0)$.

Before inserting numerical values, use
\[
	e^{-i\omega\tau}=\cos(\omega\tau)-i\sin(\omega\tau).
\]
Substitution of $\lambda=i\omega$ into~\eqref{char} and separation of real and imaginary parts yield the general boundary equations
\begin{align}
	0
	&=-(\gamma+\mu)
	+k\cos(\omega\tau_1)
	-k e^{-\gamma\tau_2}\cos\bigl(\omega(\tau_1+\tau_2)\bigr),
	\label{eq:control_boundary_real_general}\\
	\omega
	&=-k\sin(\omega\tau_1)
	+k e^{-\gamma\tau_2}\sin\bigl(\omega(\tau_1+\tau_2)\bigr).
	\label{eq:control_boundary_imag_general}
\end{align}
Notice that~\eqref{eq:control_boundary_imag_general} does not contain $\mu$. Therefore, the computational procedure is: first solve this scalar transcendental equation for each positive candidate frequency $\omega$; next substitute each frequency into~\eqref{eq:control_boundary_real_general} and calculate
\begin{equation}
	\mu(\omega)
	=-\gamma
	+k\cos(\omega\tau_1)
	-k e^{-\gamma\tau_2}\cos\bigl(\omega(\tau_1+\tau_2)\bigr).
	\label{eq:gain_from_frequency}
\end{equation}

For the parameter values above, the complex boundary equation becomes
\[
i\omega
= -\mu - 0.1
+ e^{-5.1 i\omega}
- e^{-0.38-8.9 i\omega}.
\]
Separating real and imaginary parts leads to the system
\begin{eqnarray}
	0 &=& -\mu - 0.1 + \cos(5.1\omega) - e^{-0.38}\cos(8.9\omega), \label{real} \\
	\omega &=& -\sin(5.1\omega) + e^{-0.38}\sin(8.9\omega). \label{imag}
\end{eqnarray}

Define
\[
	F(\omega)
	=\omega+\sin(5.1\omega)-e^{-0.38}\sin(8.9\omega).
\]
The relevant positive zero of $F$ is $\omega_*\approx1.0302$. Substitution into~\eqref{eq:gain_from_frequency} gives $\mu_*\approx1.077$, reported numerically as
\[
	\mu_*=1.0765.
\]
Thus, $\mu_*$ is the local stability-switching value associated with this imaginary characteristic-root pair. The characteristic roots must be checked on either side of the boundary to determine the stable side; the numerical root calculation and the simulations below show instability for gains just below $\mu_*$ and convergence to $x_*=0$ for the tested gains above it.
}

The effectiveness of the proposed control strategy is demonstrated through numerical simulations. Figure~\ref{fig:control_chaos} shows the chaotic behavior of the uncontrolled system for $\mu=0$. When the control gain is chosen close to the threshold value ($\mu=1.03<\mu_*$), trajectories move away from the equilibrium, indicating instability (Figure ~\ref{fig:control_unstable}). In contrast, for $\mu>\mu_*$, for example $\mu=1.4$, the chaotic oscillations are completely suppressed and the system converges to the equilibrium point $x=0$, as shown in Figure ~\ref{fig:control_stable}.

These numerical results are in excellent agreement with the analytical stability conditions derived from the characteristic equation, confirming that linear state feedback provides an effective and robust mechanism for chaos control in the proposed delay differential equation.

\begin{figure}[h!]
	\centering
	\begin{subfigure}[b]{0.32\textwidth}
		\centering
		\includegraphics[width=\linewidth]{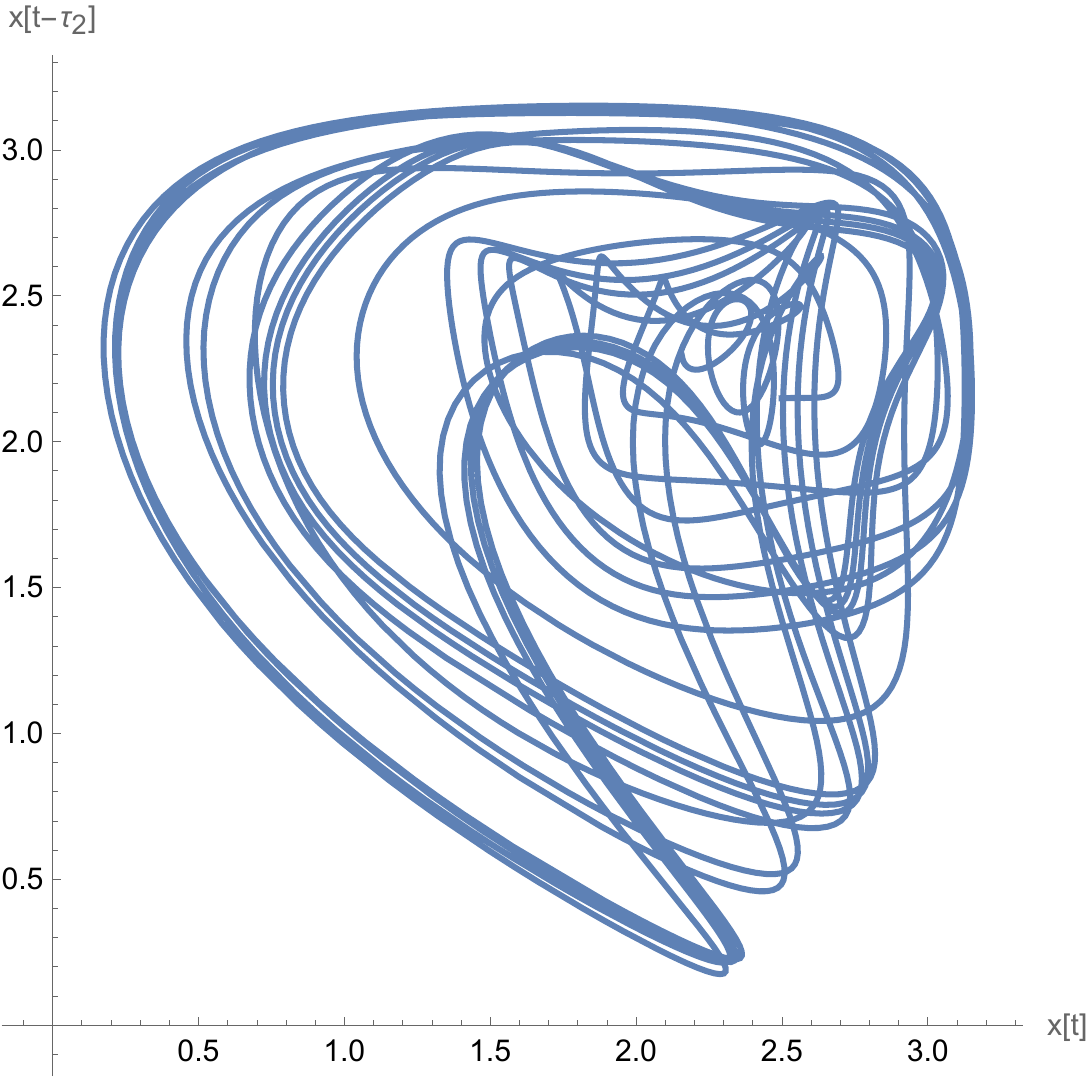}
		\caption{Uncontrolled ($\mu=0$).}
		\label{fig:control_chaos}
	\end{subfigure}
	\hfill
	\begin{subfigure}[b]{0.32\textwidth}
		\centering
		\includegraphics[width=\linewidth]{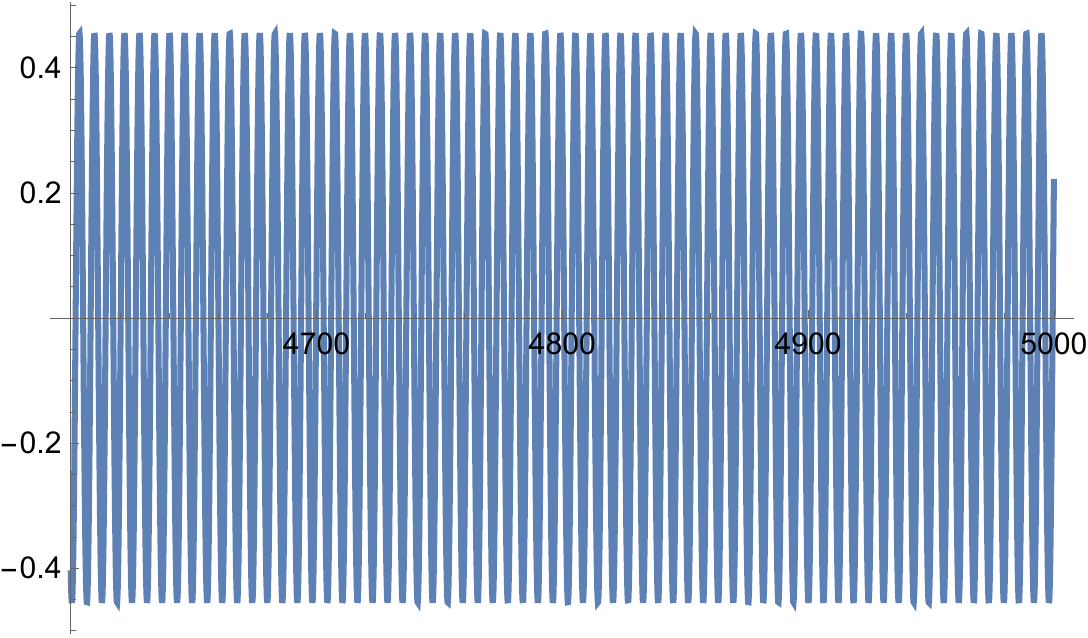}
		\caption{Unstable ($\mu=1.03 < \mu_*$).}
		\label{fig:control_unstable}
	\end{subfigure}	
	\hfill
	\begin{subfigure}[b]{0.32\textwidth}
		\centering
		\includegraphics[width=\linewidth]{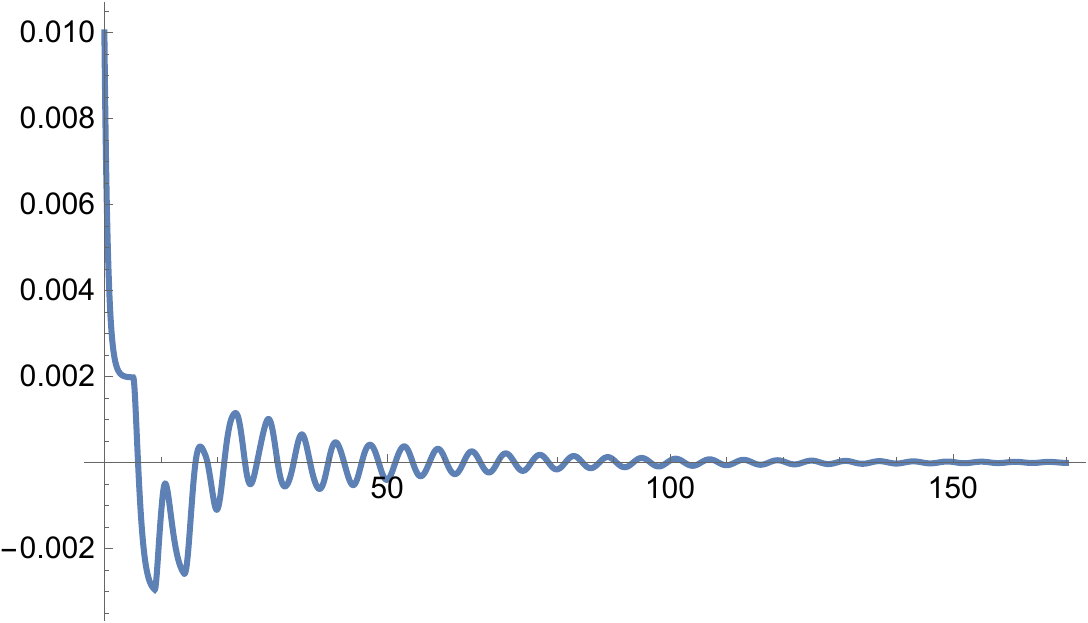}
		\caption{Stable ($\mu=1.5 > \mu_*$).}
		\label{fig:control_stable}
	\end{subfigure}
	\caption{Numerical verification of chaos suppression using linear state feedback control for $k=1,\;\gamma=0.1,\;\tau_1=5.1,\;\tau_2=3.8$.}
\end{figure}
\subsection*{Chaos control under the alternative parameter set}

The robustness of the proposed linear state feedback control is further examined using an alternative set of parameters, namely
\( k = 1.5 \), \( \gamma = 0.3 \), and \( \tau_1 = 4.5 \).
For \( \tau_2 = 7 \), the uncontrolled system exhibits chaotic dynamics.
Numerical simulations show that the same control strategy remains effective and successfully suppresses chaos when the control gain \( \mu \) exceeds a critical threshold value \( \mu_* = 1.0163 \).

Specifically, for \( \mu = 0 \), the controlled system reduces to the original system~\eqref{eq:sinx}, and a double-scroll chaotic attractor is observed, as shown in Figure ~\ref{4_k}.
When \( \mu < \mu_* \) and close to the critical value, the trajectories diverge from the trivial equilibrium, indicating instability (Figure ~\ref{4_l}).
However, for \( \mu > \mu_* \), chaotic oscillations are completely suppressed, and all trajectories converge asymptotically to the equilibrium point \( x_* = 0 \), as illustrated in Figure ~\ref{4_m}.
\begin{figure}[H]
	\centering
	\begin{subfigure}[b]{0.32\textwidth}
		\centering
		\includegraphics[width=\linewidth]{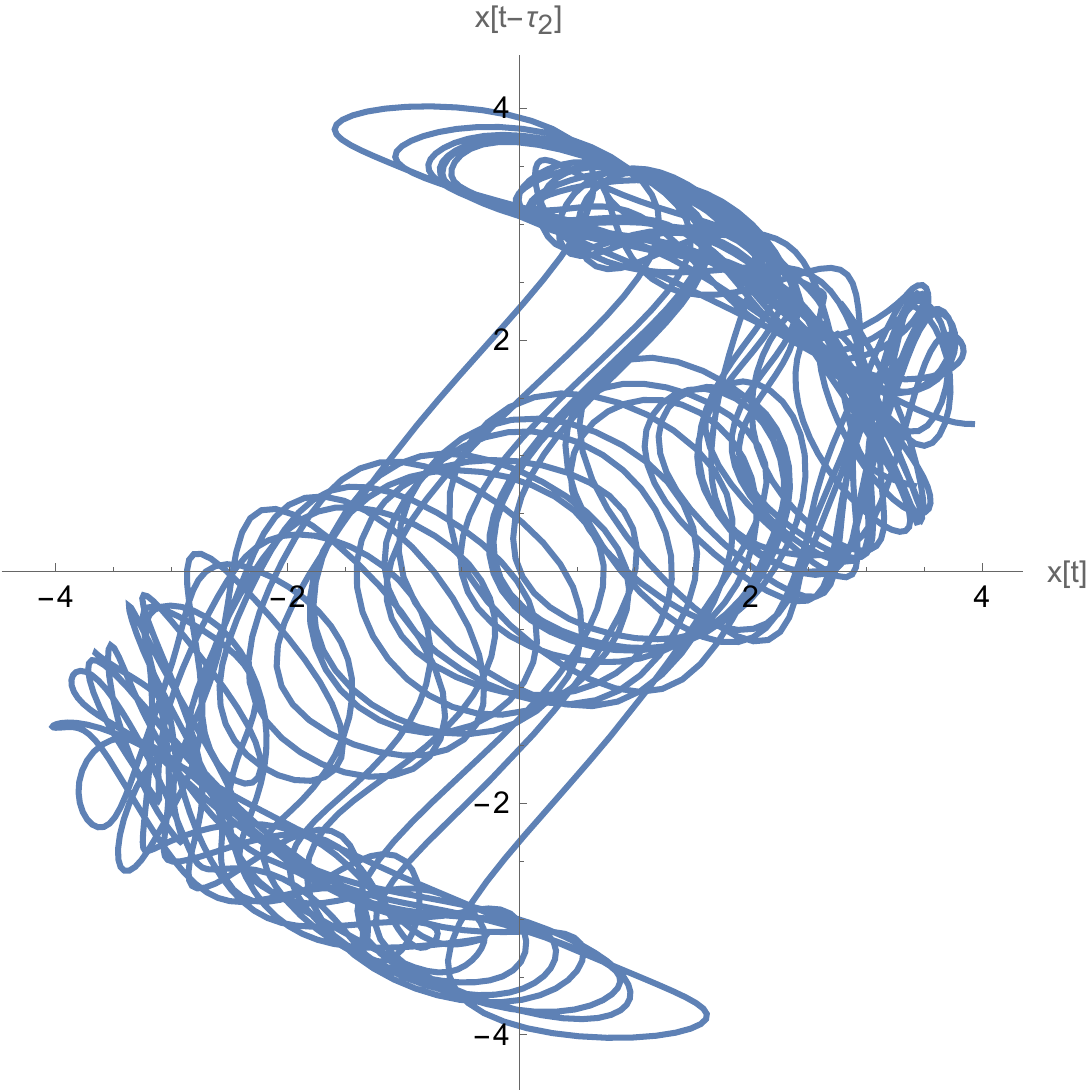}
		\caption{Double chaotic attractor for\\ $\mu=0$.}
		\label{4_k}
	\end{subfigure}\hfill
	\begin{subfigure}[b]{0.32\textwidth}
		\centering
		\includegraphics[width=\linewidth]{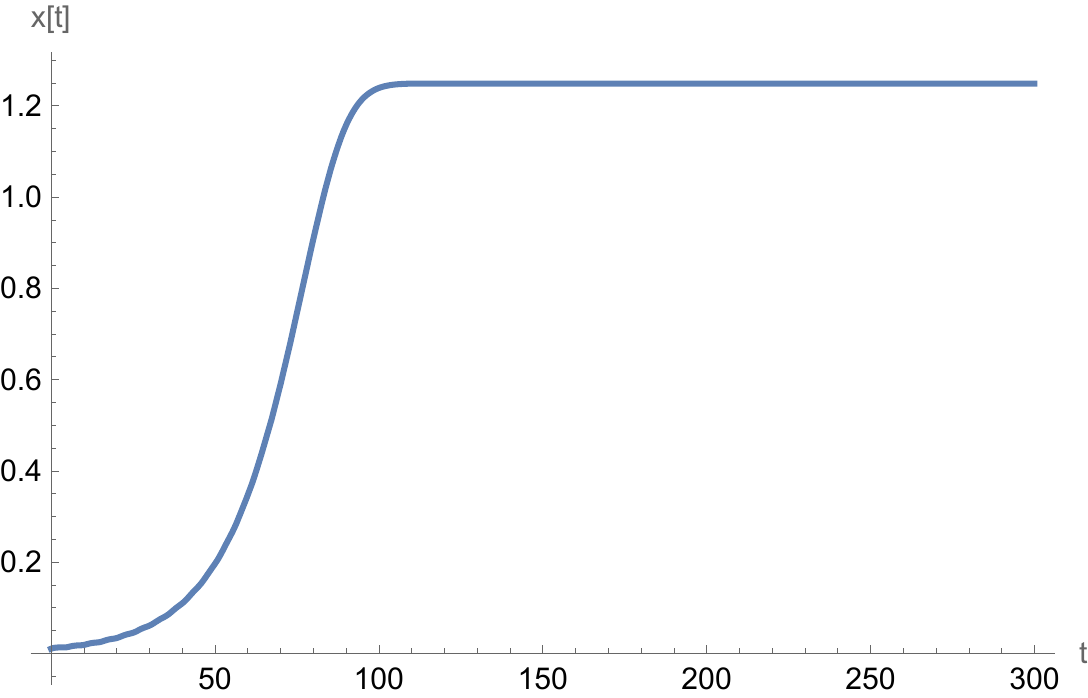}
		\caption{Unstable behavior for\\ $\mu=0.8<\mu_*$.}
		\label{4_l}
	\end{subfigure}\hfill
	\begin{subfigure}[b]{0.32\textwidth}
		\centering
		\includegraphics[width=\linewidth]{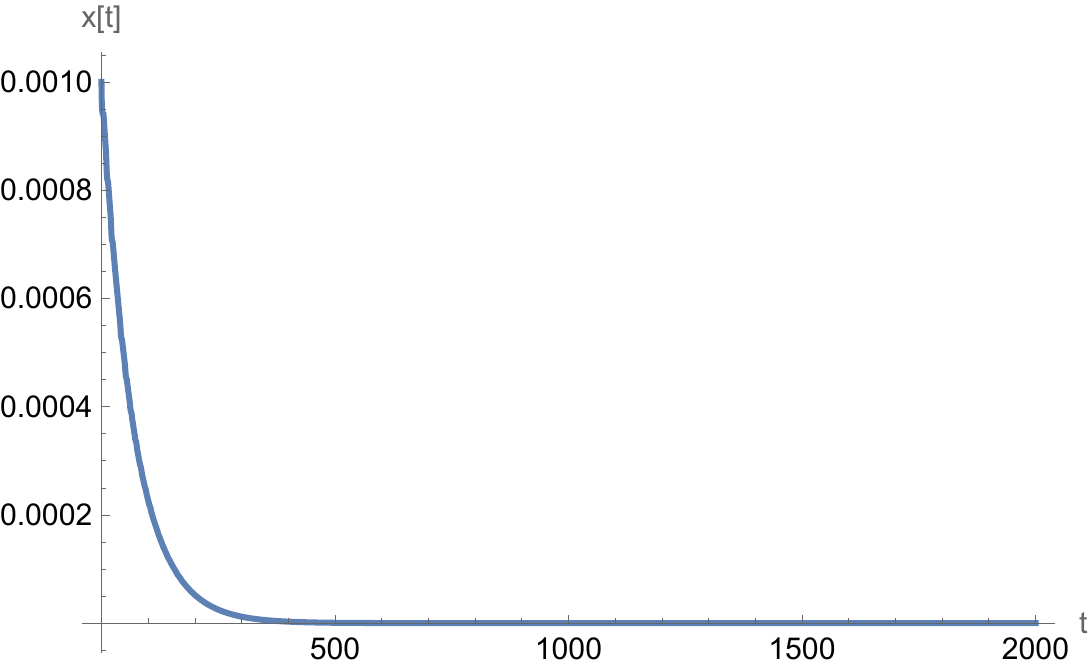}
		\caption{Converging solution for\\ $\mu=1.1>\mu_*$.}
		\label{4_m}
	\end{subfigure}
	\caption{Numerical verification of chaos suppression using linear state feedback control for $k=1.5,\;\gamma=0.3\;\tau_1=4.5,\;\tau_2=7$.}
\end{figure}
\section{Synchronization}
\label{sec:sync}
We say that the two given dynamical systems are synchronized if their trajectories evolve together after some time. To achieve synchronization, one has to apply a suitable control to one of these systems. In \cite{pecora1990synchronization}, Pecora and Carroll proposed the synchronization phenomena. It is investigated by the researchers \cite{boccaletti2002synchronization,landsman2007complete,rulkov1995generalized} that the chaotic systems can also be synchronized. Synchronization has important applications in secure communications \cite{kocarev1995general,mensour1998synchronization}, neural networks and brain dynamics \cite{klein2004synchronization}, power grid stability \cite{motter2013spontaneous} and other complex dynamical systems. In this section, we propose a simple linear feedback control and synchronize the system (\ref{eq:sinx}) with itself. \\
Consider the master system
\begin{equation}
	\dot{x}_1(t)
	= -\gamma x_1(t)
	+ k\sin\big(x_1(t - \tau_1)\big)
	- k e^{-\gamma \tau_2}\sin\big(x_1(t - \tau_1 - \tau_2)\big),
	\label{eq:ms}
\end{equation}
and the slave system
\begin{equation}
	\dot{x}_2(t)
	= -\gamma x_2(t)
	+ k\sin\big(x_2(t - \tau_1)\big)
	- k e^{-\gamma \tau_2}\sin\big(x_2(t - \tau_1 - \tau_2)\big)
	+ C(x_1,x_2),
	\label{eq:ss}
\end{equation}
where $C(x_1,x_2)$ is a control input.\\
Define the synchronization error
\begin{equation}
	e(t)=x_1(t)-x_2(t).
	\label{eq:error}
\end{equation}
Subtracting \eqref{eq:ss} from \eqref{eq:ms}, we obtain
\begin{align}
	\dot e(t) &= -\gamma e(t)
	+ k\Big[\sin(x_1(t-\tau_1))-\sin(x_2(t-\tau_1))\Big] \nonumber \\
	&\quad - k e^{-\gamma\tau_2}
	\Big[\sin(x_1(t-\tau_1-\tau_2))-\sin(x_2(t-\tau_1-\tau_2))\Big]
	- C(x_1,x_2).
	\label{eq:error_raw}
\end{align}
We choose the linear feedback
\[
C(x_1,x_2)=\delta (x_1-x_2)=\delta e(t),
\]
where $\delta\in \mathbf{R}$. Then
\begin{equation}
		\dot e(t) = -(\gamma+\delta)e(t)
	+ k\Big[\sin(x_1(t-\tau_1))-\sin(x_2(t-\tau_1))\Big] 
	- k e^{-\gamma\tau_2}
	\Big[\sin(x_1(t-\tau_1-\tau_2))-\sin(x_2(t-\tau_1-\tau_2))\Big].
	\label{eq:error_controlled}
	\end{equation}

Using the mean value theorem,
\[
\sin a - \sin b = \cos(\xi)(a-b),
\]
for some $\xi$ between $a$ and $b$, we obtain
\begin{equation}
	\dot e(t)
	= -(\gamma+\delta)e(t)
	+ k \cos(\xi_1(t)) e(t-\tau_1)
	- k e^{-\gamma\tau_2}\cos(\xi_2(t)) e(t-\tau_1-\tau_2),
	\label{eq:error_cos}
\end{equation}
where $|\cos(\xi_i(t))|\le 1$.\\

{
Taking absolute values give
\begin{equation}
	|\dot e(t)|
	\le -(\gamma+\delta)|e(t)|
	+ |k|\,|e(t-\tau_1)|+ |k| e^{-\gamma \tau_2}\,|e(t-\tau_1-\tau_2)|.
	\label{eq:bound_exact}
\end{equation}
Now, for $\gamma\ge0,\tau_2\ge0$, we have $e^{-\gamma\tau_2}\le1$, therefore
\begin{equation}
	|\dot e(t)|
	\le -(\gamma+\delta)|e(t)|
	+ |k|\,|e(t-\tau_1)|+ |k|\,|e(t-\tau_1-\tau_2)|.
	\label{eq:bound}
\end{equation}
}

\begin{theorem}[Delay-Independent Synchronization]
	\label{synch}
	If
	\begin{equation}
		\gamma+\delta > 2|k|,
		\label{eq:main_condition}
	\end{equation}
	then the zero solution of
	\begin{equation}
		\dot z(t)
		= -(\gamma+\delta)z(t)
		+ |k| z(t-\tau_1)
		+ |k| z(t-\tau_1-\tau_2)
		\label{eq:comparison}
	\end{equation}
	is asymptotically stable for all $\tau_1,\tau_2 \ge 0$. 
	Consequently,
	\[
	\lim_{t\to\infty} e(t)=0,
	\]
	and complete synchronization is achieved.
\end{theorem}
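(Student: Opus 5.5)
We prove the theorem in two stages: first the delay-independent stability of the comparison equation~\eqref{eq:comparison}, then the transfer of this stability to the error $e(t)$.

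\textbf{Stage 1: stability of~\eqref{eq:comparison}.} Write $a=\gamma+\delta$ and $b=|k|$, so the hypothesis~\eqref{eq:main_condition} reads $a>2b\ge 0$. The characteristic function of~\eqref{eq:comparison} is
\[
Q(\lambda)=\lambda+a-b e^{-\lambda\tau_1}-b e^{-\lambda(\tau_1+\tau_2)}.
\]
Suppose $Q(\lambda)=0$ with $\operatorname{Re}\lambda\ge 0$. Then
\[
|\lambda+a|=b\,\bigl|e^{-\lambda\tau_1}+e^{-\lambda(\tau_1+\tau_2)}\bigr|\le 2b.
\]
On the other hand, $\operatorname{Re}(\lambda+a)\ge a>2b$. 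This is a contradiction, so every root satisfies $\operatorname{Re}\lambda<0$ for all $\tau_1,\tau_2\ge0$. The argument is direct and needs no continuation in the delays, unlike the proof of Theorem~\ref{prel}.

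This root condition gives more than asymptotic stability. For fixed delays, only finitely many roots lie in any right strip, and $|\lambda|\to\infty$ forces $\operatorname{Re}\lambda\to-\infty$, because the delay terms are bounded on $\operatorname{Re}\lambda\ge -\epsilon$. Hence $\sup\operatorname{Re}\lambda<0$, and the zero solution of~\eqref{eq:comparison} is exponentially stable.

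A cleaner route is a Razumikhin or sup-norm argument. Let $M(t)=\sup_{s\le t}|z(s)|$. At any time when $|z(t)|=M(t)>0$, we get $\frac{d}{dt}|z|\le(-a+2b)M<0$. So $|z|$ is nonincreasing in sup norm, and a standard refinement with the weight $e^{\epsilon t}$ for small $\epsilon>0$ gives exponential decay.

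\textbf{Stage 2: transfer to $e(t)$.} We use~\eqref{eq:error_cos} with the bound $|k\cos\xi_i|\le|k|$ and $e^{-\gamma\tau_2}\le1$. The rigorous statement replacing the informal~\eqref{eq:bound} is a differential inequality for the upper Dini derivative:
\[
D^+|e(t)|\le -a|e(t)|+b|e(t-\tau_1)|+b|e(t-\tau_1-\tau_2)|.
\]
We do not need the comparison principle for DDEs with nonnegative delayed coefficients, which would give $|e(t)|\le z(t)$ when $z$ has the history $\max|e|$. Instead we apply the same weighted sup-norm argument directly to $|e|$.

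Choose $\epsilon>0$ small enough that
\[
-a+\epsilon+b e^{\epsilon\tau_1}+b e^{\epsilon(\tau_1+\tau_2)}<0,
\]
which is possible since $a>2b$. Set $w(t)=e^{\epsilon t}|e(t)|$. Suppose $w$ first exceeds its initial maximum $W=\sup_{[-\tau_1-\tau_2,0]}w$ at some time $t_0$. At $t_0$ we have
\[
D^+w\le\bigl(\epsilon-a+b e^{\epsilon\tau_1}+b e^{\epsilon(\tau_1+\tau_2)}\bigr)W<0,
\]
a contradiction. Therefore $|e(t)|\le W e^{-\epsilon t}$, so $e(t)\to0$ exponentially and complete synchronization follows.

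\textbf{Main obstacle.} The delicate point is justifying the Dini-derivative inequality where $e$ vanishes, together with the first-crossing argument, since the paper's inequality~\eqref{eq:bound} as written treats $|\dot e|$ loosely. We handle this by working with $D^+|e|$. Where $e(t)\ne0$, we have $D^+|e|=\operatorname{sgn}(e)\dot e$. Where $e(t)=0$, we have $D^+|e|=|\dot e|$, and the right-hand side bound still holds because the $-a|e|$ term vanishes there.

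The crossing argument needs strict exceedance. To get it, we run the argument with the threshold $W+\eta$ for an arbitrary $\eta>0$ and then let $\eta\to0$.
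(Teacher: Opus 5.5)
Your proof is correct, and it takes a different route from the paper's in both stages.

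\textbf{Stage 1 (stability of the comparison equation).} For \eqref{eq:comparison}, the paper argues in three steps:
\begin{itemize}
\item it rules out roots on the imaginary axis, since the real-part equation would force $\gamma+\delta\le 2|k|$;
\item it checks that the case $\tau_1=\tau_2=0$ is stable;
\item it then appeals to continuity of the characteristic roots in the delays, as in Theorem~\ref{prel}.
\end{itemize}
Your modulus estimate $|\lambda+a|\le 2b<a\le\operatorname{Re}(\lambda+a)$ excludes the whole closed right half-plane at once, for every pair of delays. So you need no continuation argument, and no tacit fact that roots cannot enter from infinity. Your variant on $\operatorname{Re}\lambda\ge-\epsilon$ also gives a spectral gap $\sup\operatorname{Re}\lambda<0$ for each fixed pair of delays.

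\textbf{Stage 2 (transfer to $e(t)$).} The paper's proof is a single sentence: stability of \eqref{eq:comparison} together with \eqref{eq:bound} implies $|e(t)|\to0$. This tacitly uses a comparison principle for delay inequalities with nonnegative delayed coefficients. Also, \eqref{eq:bound} is written with $|\dot e|$ where $D^+|e|$ is meant.

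You bypass the comparison equation altogether and run a weighted first-crossing (Halanay-type) argument directly on $D^+|e|$. This is essentially the proof the paper gives later for Theorem~\ref{thm:fixed_tau2_sync}, with $e^{-\gamma\tau_2}$ replaced by $1$. Your route buys two things:
\begin{itemize}
\item an explicit rate, $|e(t)|\le We^{-\epsilon t}$;
\item through the $W+\eta$ threshold, a treatment of the case $W=0$ that does not invoke uniqueness.
\end{itemize}
Note that in your version Stage~1 is needed only for the first assertion of the theorem (stability of \eqref{eq:comparison}). The synchronization conclusion rests on Stage~2 alone.

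\textbf{One hypothesis to state.} The step $e^{-\gamma\tau_2}\le 1$ requires $\gamma\ge0$, and this does not follow from $\gamma+\delta>2|k|$. The paper imposes $\gamma\ge 0$ when deriving \eqref{eq:bound}, and you should carry it explicitly as a hypothesis too.
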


\begin{proof}
	The characteristic equation of \eqref{eq:comparison} is
	\[
	\lambda + (\gamma+\delta)
	- |k| e^{-\lambda\tau_1}
	- |k| e^{-\lambda(\tau_1+\tau_2)}=0.
	\]
	Suppose $\lambda = iv$ ($v \in \mathbb{R}$, $v\neq 0$) is a purely imaginary root.
	Substituting $\lambda = iv$ and using
	\[
	e^{-iv\tau}=\cos(v\tau)-i\sin(v\tau),
	\]
	we obtain
	\begin{align*}
		iv + (\gamma+\delta)
		- |k|\big(\cos(v\tau_1)-i\sin(v\tau_1)\big)  
		- |k|\big(\cos(v(\tau_1+\tau_2))
		- i\sin(v(\tau_1+\tau_2))\big)=0.
	\end{align*}
	Separating real and imaginary parts yields
	\begin{align}
		0 &= -(\gamma+\delta)
		+ |k|\cos(v\tau_1)
		+ |k|\cos(v(\tau_1+\tau_2)), 
		\label{eq:real_part} \\
		v &= - |k|\sin(v\tau_1)
		- |k|\sin(v(\tau_1+\tau_2)).
		\label{eq:imag_part}
	\end{align}	
	From \eqref{eq:real_part}, using $|\cos(\theta)|\le 1$, we obtain
	\[
-2|k| \le |k|\cos(v\tau_1)
	+ |k|\cos(v(\tau_1+\tau_2))
	\le 2|k|.
	\]	
	Thus, \eqref{eq:real_part} can hold only if
	\[
-2|k| \le \gamma+\delta \le 2|k|.
	\]	
	Hence, if
	\[
	\gamma+\delta > 2|k|,
	\]
	then equation \eqref{eq:real_part} has no solution.
	Therefore, the characteristic equation admits no purely imaginary roots
	for any $\tau_1,\tau_2 \ge 0$.
	For $\tau_1=\tau_2=0$, system \eqref{eq:comparison} reduces to
	\[
	\dot z(t)=-(\gamma+\delta-2|k|)z(t),
	\]
which is asymptotically stable when \eqref{eq:main_condition} holds. Therefore, the system (\ref{eq:comparison}) is stable for all $\tau_1,\tau_2\ge0$ under the condition (\ref{eq:main_condition}).\\ From inequality \eqref{eq:bound}, the  stability of the system (\ref{eq:comparison}) implies \[ |e(t)|\to 0 \quad \text{as } t\to\infty. \] Hence, complete synchronization is achieved. 
 \medskip \noindent
\end{proof}

{

\begin{theorem}[Less Conservative Fixed-$\tau_2$ Synchronization]
\label{thm:fixed_tau2_sync}
Assume that $\gamma\geq0$, fix $\tau_2\geq0$, and let $\tau_1\geq0$ be arbitrary. If the feedback gain $\delta$ satisfies
\begin{equation}
\gamma+\delta>|k|\bigl(1+e^{-\gamma\tau_2}\bigr),
\label{eq:less_conservative_sync}
\end{equation}
then the master and slave systems~\eqref{eq:ms}--\eqref{eq:ss} are globally exponentially synchronized. More precisely, let $\eta>0$ be the unique positive solution of
\begin{equation}
\eta
=
(\gamma+\delta)
-|k|e^{-\eta\tau_1}
-|k|e^{-\eta(\tau_1+\tau_2)}.
\label{eq:fixed_tau2_rate}
\end{equation}
Then, for every $\sigma\in(0,\eta)$, there exists a constant $M_\sigma>0$, determined by the initial error history, such that
\begin{equation}
|e(t)|\leq M_\sigma e^{-\sigma t},
\qquad t\geq0.
\label{eq:fixed_tau2_decay}
\end{equation}
Consequently,
\[
\lim_{t\to\infty}e(t)=0.
\]
\end{theorem}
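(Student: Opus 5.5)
The plan is a direct Halanay-type comparison on the weighted error $W(t)=e^{\sigma t}|e(t)|$. Start from the exact error equation~\eqref{eq:error_cos}. Unlike Theorem~\ref{synch}, do \emph{not} discard the factor $e^{-\gamma\tau_2}$, since keeping it is exactly what produces the right-hand side of~\eqref{eq:less_conservative_sync}. The steps are as follows.

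\textbf{Step 1 (existence of $\eta$).} Put
\[
h(s)=s-(\gamma+\delta)+|k|e^{-s\tau_1}+|k|e^{-s(\tau_1+\tau_2)}.
\]
From~\eqref{eq:less_conservative_sync} we get $h(0)=2|k|-(\gamma+\delta)<|k|(1-e^{-\gamma\tau_2})$. Since $h(s)\to\infty$ as $s\to\infty$, the intermediate value theorem gives a positive zero whenever $h(0)<0$. I would then prove uniqueness by a convexity argument: $h$ is strictly convex for $|k|>0$ and satisfies $h(0)<0$, so it has exactly one positive zero $\eta$, and $h<0$ on $[0,\eta)$.

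\textbf{Step 2 (Dini-derivative inequality).} Fix $\sigma\in(0,\eta)$. Use $|\cos\xi_i|\le1$ in~\eqref{eq:error_cos} and take the upper right Dini derivative $D^+|e(t)|$. This gives
\[
D^+W(t)\le-(\gamma+\delta-\sigma)W(t)+|k|e^{\sigma\tau_1}W(t-\tau_1)+|k|e^{-\gamma\tau_2}e^{\sigma(\tau_1+\tau_2)}W(t-\tau_1-\tau_2).
\]
Let $M_\sigma=\sup_{[-(\tau_1+\tau_2),0]}e^{\sigma s}|e(s)|$, which is determined by the initial error history, and apply the standard first-crossing argument. Suppose $t^*$ is the first time at which $W$ exceeds $M_\sigma$. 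At $t^*$ we have $D^+W(t^*)\ge0$, while $W\le M_\sigma$ at both delayed arguments. This contradicts the inequality above, and so yields~\eqref{eq:fixed_tau2_decay}, provided that
\begin{equation*}
g(\sigma):=\sigma-(\gamma+\delta)+|k|e^{\sigma\tau_1}+|k|e^{-\gamma\tau_2}e^{\sigma(\tau_1+\tau_2)}<0 .
\end{equation*}
To get a strict contradiction I would run the argument with $M_\sigma+\epsilon$ and then let $\epsilon\to0$.

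\textbf{Step 3 (linking $g<0$ to $\sigma<\eta$).} This is the main obstacle. The decay exponent that the comparison argument naturally produces involves $e^{+\sigma\tau}$, whereas $\eta$ in~\eqref{eq:fixed_tau2_rate} is defined through $e^{-\eta\tau}$. Both functions vanish at the same point only when $\tau_1=\tau_2=0$. I would first try to prove $g(\sigma)<0$ for $\sigma<\eta$ directly, writing $g(\sigma)=h(\sigma)+|k|\bigl(e^{\sigma\tau_1}-e^{-\sigma\tau_1}\bigr)+|k|\bigl(e^{-\gamma\tau_2}e^{\sigma(\tau_1+\tau_2)}-e^{-\sigma(\tau_1+\tau_2)}\bigr)$ and using $h(\sigma)<0$. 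The correction terms are nonnegative, however, so this estimate only closes when they are dominated by $|h(\sigma)|$, that is, for small delays or for $\sigma$ bounded by the positive zero of $g$.

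If the direct estimate fails in general, I would state precisely the admissible range, namely $\sigma$ below $\min\{\eta,\sigma_0\}$, where $\sigma_0>0$ is the zero of $g$. The number $\sigma_0$ exists because $g(0)=|k|(1+e^{-\gamma\tau_2})-(\gamma+\delta)<0$ by~\eqref{eq:less_conservative_sync}, and $g$ is increasing. I would then check whether the stated $\eta$ should be read with this sign convention. In every case the qualitative conclusion survives: $\sigma_0>0$ exists under~\eqref{eq:less_conservative_sync}, so the exponential bound and $\lim_{t\to\infty}e(t)=0$ hold for every $\tau_1\ge0$.
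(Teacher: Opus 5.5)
Your Step 2 is the paper's proof. The paper compares $v=|e|$ with $w(t)=M_\sigma e^{-\sigma t}$ at a first contact time, which is equivalent to your bound $W\le M_\sigma$. It uses the same $M_\sigma$ and the function $F(r)=\gamma+\delta-r-|k|e^{r\tau_1}-|k|e^{-\gamma\tau_2+r(\tau_1+\tau_2)}$, which is your $-g$. Your Step 3 diagnosis is also correct, and it exposes an error in the paper's own proof rather than a weakness of your method. The paper takes $\eta$ to be the unique positive zero of $F$ (your $\sigma_0$). It then asserts that $F(\eta)=0$ ``is precisely'' \eqref{eq:fixed_tau2_rate}. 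That is false: the exponents have the opposite sign and the factor $e^{-\gamma\tau_2}$ is missing. What the comparison argument (yours and the paper's) actually establishes is \eqref{eq:fixed_tau2_decay} for every $\sigma\in(0,\sigma_0)$. Existence and uniqueness of $\sigma_0$ follow exactly as you say: $g(0)<0$, $g'>0$ and $g\to\infty$.

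Two parts of your write-up need fixing.

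\emph{Step 1 is wrong as written.} The condition $h(0)<0$ is equivalent to $\gamma+\delta>2|k|$, the conservative hypothesis of Theorem~\ref{synch}. It does not follow from \eqref{eq:less_conservative_sync}. If $|k|(1+e^{-\gamma\tau_2})<\gamma+\delta\le 2|k|$, the convex function $h$ has $h(0)\ge 0$ and may have no positive zero or two. So the $\eta$ of \eqref{eq:fixed_tau2_rate} need not exist or be unique.

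\emph{Do not hedge with $\min\{\eta,\sigma_0\}$ or look for a sign convention that rescues \eqref{eq:fixed_tau2_rate}.} The literal rate claim is false in general, so no argument could close Step 3 as you first intended. Even when $\gamma+\delta>2|k|$, the root of \eqref{eq:fixed_tau2_rate} tends to $\gamma+\delta$ as $\tau_1\to\infty$. By contrast, linearize the error equation about the synchronized state $x_1=x_2=0$; its characteristic function is $\lambda+\gamma+\delta-ke^{-\lambda\tau_1}+ke^{-\gamma\tau_2}e^{-\lambda(\tau_1+\tau_2)}$. By standard large-delay asymptotics, this has roots whose real parts tend to $0$ like $-C/\tau_1$ for some $C>0$. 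My rough numerical estimate for $k=1$, $\gamma=0.1$, $\delta=3$, $\tau_1=50$, $\tau_2=1$:
\begin{itemize}
\item $\eta\approx 3.1$;
\item the slowest linear mode decays at a rate of about $0.016$;
\item $\sigma_0\approx 0.0096$.
\end{itemize}

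The $\min$ can also cost you in the other direction. The second correction term in your decomposition of $g-h$ is negative when $\gamma\tau_2>2\sigma(\tau_1+\tau_2)$, so your claim that both terms are nonnegative is off. As a result, $\sigma_0$ can exceed the root of \eqref{eq:fixed_tau2_rate}. For example, $|k|=1$, $\gamma=2$, $\delta=0.5$, $\tau_1=0$, $\tau_2=0.1$ gives $\eta\approx 0.55<\sigma_0\approx 0.63$. Taking $\min\{\eta,\sigma_0\}$ would then weaken the result for no reason.

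Drop Step 1 and state the theorem with the rate defined as the positive zero of $g$. With that change your proof is complete and coincides with what the paper's argument actually proves.
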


\begin{proof}
Let
\[
v(t)=|e(t)|.
\]
Since $v(t)$ may fail to be differentiable at points where $e(t)=0$, we work with its upper right Dini derivative
\[
D^+v(t)
=
\limsup_{h\to0^+}
\frac{v(t+h)-v(t)}{h}.
\]
Using the error equation and the inequality
\[
|\sin u-\sin v|\leq |u-v|,
\]
we obtain
\begin{equation}
D^+v(t)
\leq
-(\gamma+\delta)v(t)
+|k|v(t-\tau_1)
+|k|e^{-\gamma\tau_2}
v(t-\tau_1-\tau_2),
\qquad t\geq0.
\label{eq:fixed_tau2_dini}
\end{equation}
This is a two-delay differential inequality of Halanay type.

Define
\[
F(r)
=
\gamma+\delta-r
-|k|e^{r\tau_1}
-|k|e^{-\gamma\tau_2+r(\tau_1+\tau_2)},
\qquad r\geq0.
\]
Then
\[
F(0)
=
\gamma+\delta-|k|-|k|e^{-\gamma\tau_2}>0
\]
by~\eqref{eq:less_conservative_sync}, whereas
\[
F(r)\to-\infty
\qquad\text{as }r\to\infty.
\]
Furthermore,
\[
F'(r)
=
-1
-|k|\tau_1e^{r\tau_1}
-|k|e^{-\gamma\tau_2}
(\tau_1+\tau_2)e^{r(\tau_1+\tau_2)}
<0.
\]
Thus, $F$ is strictly decreasing and has a unique positive zero $\eta$. The equation $F(\eta)=0$ is precisely~\eqref{eq:fixed_tau2_rate}.

Fix any $\sigma\in(0,\eta)$. Since $F$ is strictly decreasing,
\[
F(\sigma)
=
\gamma+\delta-\sigma
-|k|e^{\sigma\tau_1}
-|k|e^{-\gamma\tau_2}
e^{\sigma(\tau_1+\tau_2)}
>0.
\label{eq:positive_margin_sigma}
\]

Let
\[
e_0(s)=e(s),
\qquad
s\in[-(\tau_1+\tau_2),0],
\]
denote the initial error history and set
\[
M_\sigma
=
\max_{-(\tau_1+\tau_2)\leq s\leq0}
e^{\sigma s}|e_0(s)|.
\]
If $M_\sigma=0$, then the two initial histories coincide, and uniqueness of solutions gives
\[
e(t)\equiv0.
\]
Suppose therefore that $M_\sigma>0$ and define
\[
w(t)=M_\sigma e^{-\sigma t},
\qquad
t\geq-(\tau_1+\tau_2).
\]
By the definition of $M_\sigma$, for every
\[
s\in[-(\tau_1+\tau_2),0],
\]
we have
\[
e^{\sigma s}|e_0(s)|
\leq M_\sigma.
\]
Hence
\[
v(s)=|e_0(s)|
\leq
M_\sigma e^{-\sigma s}
=
w(s),
\]
and therefore
\[
v(s)\leq w(s),
\qquad
-(\tau_1+\tau_2)\leq s\leq0.
\]
Thus, the comparison function $w$ dominates the complete initial error history.

We now prove that
\[
v(t)\leq w(t),
\qquad t\geq0.
\]
Suppose, to the contrary, that there exists $t>0$ such that
\[
v(t)>w(t).
\]
By continuity of $v$ and $w$ and the fact that $v(s)\leq w(s)$ on the initial interval, there exists a first contact time $t_*>0$ such that
\[
v(t_*)=w(t_*),
\qquad
v(t)\leq w(t)
\quad
\text{for }
-(\tau_1+\tau_2)\leq t\leq t_*,
\]
and $v(t)>w(t)$ for times immediately to the right of $t_*$.

Consequently,
\[
D^+(v-w)(t_*)\geq0.
\]
On the other hand, using~\eqref{eq:fixed_tau2_dini} together with
\[
v(t_*-\tau_1)\leq w(t_*-\tau_1)
\]
and
\[
v(t_*-\tau_1-\tau_2)
\leq
w(t_*-\tau_1-\tau_2),
\]
we obtain
\begin{align*}
D^+v(t_*)
&\leq
-(\gamma+\delta)w(t_*)
+|k|w(t_*-\tau_1)\\
&\quad
+|k|e^{-\gamma\tau_2}
w(t_*-\tau_1-\tau_2).
\end{align*}
Since
\[
w(t)=M_\sigma e^{-\sigma t},
\]
we have
\[
w(t_*-\tau_1)
=
e^{\sigma\tau_1}w(t_*),
\]
and
\[
w(t_*-\tau_1-\tau_2)
=
e^{\sigma(\tau_1+\tau_2)}w(t_*).
\]
Therefore,
\begin{align*}
D^+v(t_*)
&\leq
\left[
-(\gamma+\delta)
+|k|e^{\sigma\tau_1}
+|k|e^{-\gamma\tau_2}
e^{\sigma(\tau_1+\tau_2)}
\right]w(t_*)\\
&=
-\left[\sigma+F(\sigma)\right]w(t_*)\\
&<
-\sigma w(t_*).
\end{align*}
Since
\[
w'(t_*)=-\sigma w(t_*),
\]
and $w$ is continuously differentiable, we obtain
\[
D^+(v-w)(t_*)
=
D^+v(t_*)-w'(t_*)
<0,
\]
which contradicts
\[
D^+(v-w)(t_*)\geq0.
\]
Hence
\[
v(t)\leq w(t),
\qquad t\geq0.
\]
Consequently,
\[
|e(t)|
=
v(t)
\leq
M_\sigma e^{-\sigma t},
\qquad t\geq0,
\]
which proves~\eqref{eq:fixed_tau2_decay}. Since $\sigma>0$,
\[
\lim_{t\to\infty}|e(t)|=0.
\]
Thus, complete synchronization is achieved.
\end{proof}

\begin{remark}[Conservativeness and comparison with existing criteria]
Condition~\eqref{eq:main_condition} is a sufficient global condition and is not necessary for synchronization. Its conservativeness enters at three stages. First, the trajectory-dependent factors $\cos(\xi_i(t))$ in~\eqref{eq:error_cos} are replaced by their worst-case magnitude one, although the synchronized trajectory may spend substantial time in regions where $|\cos(\xi_i(t))|\ll1$. Second, taking absolute values removes the opposite signs of the two delayed feedback terms and hence discards possible cancellation between them. Third, the attenuation factor $e^{-\gamma\tau_2}$ is replaced by one in passing from~\eqref{eq:bound_exact} to~\eqref{eq:bound}. The resulting bound is uniform over both delays, but it requires a larger coupling gain than may be needed for a specified pair $(\tau_1,\tau_2)$. Consequently, failure of $\gamma+\delta>2|k|$ does not imply failure of synchronization.

Retaining the attenuation factor in~\eqref{eq:bound_exact} gives the less conservative fixed-$\tau_2$ criterion established in Theorem~\ref{thm:fixed_tau2_sync}. Because $1+e^{-\gamma\tau_2}\leq2$, condition~\eqref{eq:main_condition} is the worst-case envelope of~\eqref{eq:less_conservative_sync} over all $\tau_2\geq0$. Even~\eqref{eq:less_conservative_sync} remains conservative because it does not use the signs or time variation of the cosine factors.

Sharper synchronization thresholds in the literature generally retain more system information. Mensour and Longtin \cite{mensour1998synchronization} studied synchronization of delay-differential equations through model-specific error dynamics. Pyragas \cite{pyragas1998synchronization} estimated coupling thresholds using Krasovskii--Lyapunov theory and a large-delay perturbation approach; such estimates can exploit the particular delayed dynamics but depend more strongly on the model and its operating regime. For synchronized equilibria, Michiels and Nijmeijer \cite{michiels2009synchronization} factorized the linearized characteristic function and constructed stability regions in the coupling-gain--delay plane. Applied to the present model, an analogous local analysis would retain the signed coefficients and the actual values of $\tau_1$ and $\tau_2$, and could therefore certify synchronization for gains below~\eqref{eq:main_condition}; however, it would provide a delay-dependent local result rather than the present closed-form global bound. Lyapunov--Krasovskii/LMI criteria offer another route to reduced conservativeness by incorporating delay intervals and additional decision matrices, at the cost of numerical matrix feasibility tests instead of a scalar analytical condition.

Thus, the main advantage of~\eqref{eq:main_condition} is its simplicity, global character, and independence of the two delays. Its price is conservativeness. Equation~\eqref{eq:less_conservative_sync} provides an intermediate alternative when $\tau_2$ is known, while characteristic-root or transverse-Lyapunov analyses are more appropriate when the smallest admissible gain for a particular delay pair is required.
\end{remark}
}

\subsection*{Numerical Verification}

To verify the theoretical result, numerical simulations are carried out
for parameter values satisfying condition \eqref{eq:main_condition}.
The master and slave systems are simulated with different initial
functions on $[-(\tau_1+\tau_2),0]$.
It is observed that $x_1(t)$ and $x_2(t)$ converge to the same trajectory
as $t$ increases (see Figures (\ref{4_5_1} and \ref{4_5_2})). 
The synchronization error $e(t)$ decreases to zero (Figures (\ref{4_5_3} and \ref{4_5_4})), confirming synchronization.
These simulations agree with the analytical result.

\begin{figure}[H]
	\centering
	\begin{subfigure}[b]{0.48\textwidth}
		\centering
		\includegraphics[width=\linewidth,height=8cm,keepaspectratio]{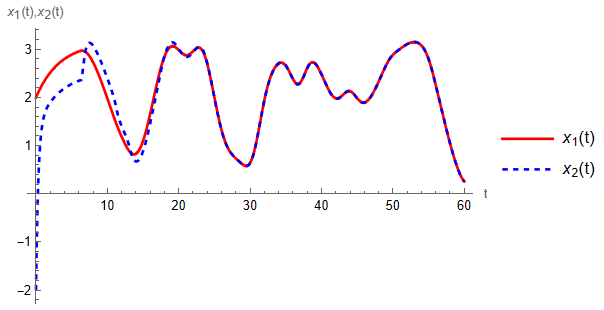}
		\caption{Time evolution of $x_1(t)$ and $x_2(t)$ for $k=1.4,\;\gamma=0.3,\;\delta=2.8,\;\tau_1=6.2,\;\tau_2=4.5$.}
		\label{4_5_1}
	\end{subfigure}\hfill
	\begin{subfigure}[b]{0.48\textwidth}
		\centering
		\includegraphics[width=\linewidth,height=8cm,keepaspectratio]{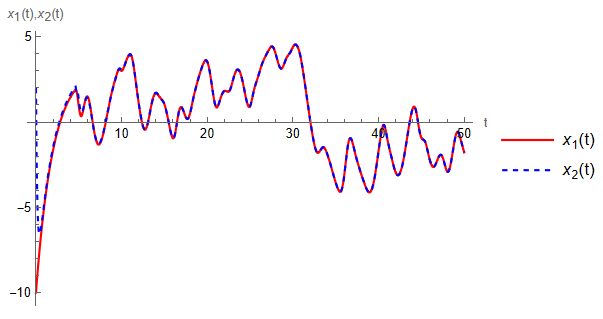}
		\caption{Time evolution of $x_1(t)$ and $x_2(t)$ for $k=3.2,\;\gamma=0.5,\;\delta=6,\;\tau_1=4.6,\;\tau_2=5.2$.}
		\label{4_5_2}
	\end{subfigure}
	
	\caption{Synchronization of master and slave states under linear state feedback control. The trajectories $x_1(t)$ and $x_2(t)$ converge for both parameter sets.}
\end{figure}
\begin{figure}[H]
	\centering
	\begin{subfigure}[b]{0.43\textwidth}
		\centering
		\includegraphics[width=\linewidth]{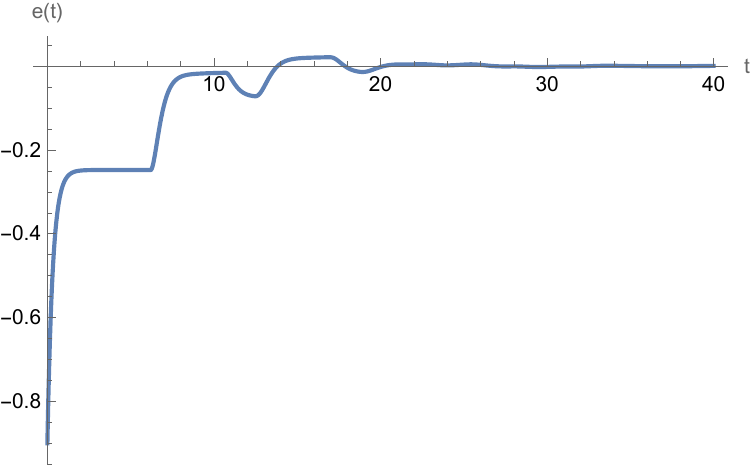}
		\caption{Synchronization error $e(t)$ for $k=1.4,\;\gamma=0.3,\;\delta=2.8,\;\tau_1=6.2,\;\tau_2=4.5$.}
		\label{4_5_3}
	\end{subfigure}\hfill
	\begin{subfigure}[b]{0.43\textwidth}
		\centering
		\includegraphics[width=\linewidth]{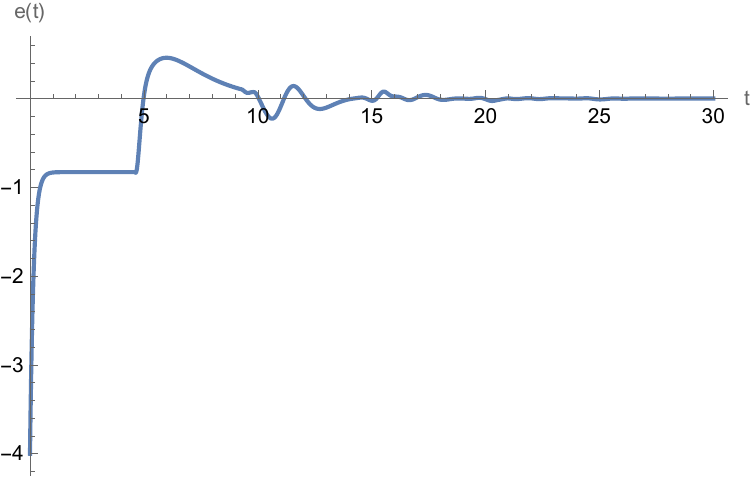}
		\caption{Synchronization error $e(t)$ for $k=3.2,\;\gamma=0.5,\;\delta=6,\;\tau_1=4.6,\;\tau_2=5.2$.}
		\label{4_5_4}
	\end{subfigure}
	
	\caption{Decay of the synchronization error $e(t)$ confirming asymptotic synchronization for parameter sets satisfying $\gamma+\delta>2|k|$.}
\end{figure}

To better show how fast the synchronization error decreases, 
Table~\ref{tab:error} lists the numerical values of $e(t)$ 
for the parameter set $k=1.4,\;\gamma=0.3,\;\delta=2.8,\;\tau_1=6.2,\;\tau_2=4.5$. 
It can be seen that the error decreases quickly and approaches zero as time $t$ increases. 
This agrees with the stability result given in Theorem~\ref{synch}. 
Hence, the numerical results confirm that the proposed linear feedback control is effective.

\begin{table}[H]
	\centering
	\caption{Time evolution of synchronization error}
	\label{tab:error}
	\begin{tabular}{cc}
	\hline
	$t$ & $e(t)$ \\
	\hline
	0 & 4. \\
	5 & 0.608387 \\
	10 & 0.418386 \\
	15 & 0.207187 \\
	20 & 0.0219953 \\
	25 & 0.00859616 \\
	30 & 0.0020686 \\
	35 & 0.00249555 \\
	40 & 0.00033673 \\
	\hline
\end{tabular}
\hspace{1.5cm}
\begin{tabular}{cc}
	\hline
$t$ & $e(t)$ \\
\hline
	45 & 0.00016778 \\
50 & 0.00013447 \\
55 & 0.00003783 \\
60 & 0.00002399 \\
65 & $7.29\times 10^{-6}$ \\
70 & $3.03\times 10^{-6}$ \\
75 & $6\times 10^{-8}$ \\
80 & $5.35\times 10^{-7}$ \\
	85 & $9\times 10^{-7}$ \\
\hline
\end{tabular}
\hspace{1.5cm}
\begin{tabular}{cc}
	\hline
	$t$ & $e(t)$ \\
	\hline
	
	90 & $2.57\times 10^{-7}$ \\
	95 & $6.7\times 10^{-8}$ \\
	100 & $1.8\times 10^{-8}$ \\
	105 &$ 6\times 10^{-9}$ \\
	110 & $1.2\times 10^{-8}$ \\
	115 & $8\times 10^{-9}$ \\
	120 & $1\times 10^{-9}$  \\
	125& $1\times 10^{-9}$\\
	130&0\\
	\hline
	
\end{tabular}

\end{table}

Synchronizing chaotic systems is challenging because they are highly sensitive to initial conditions. Even a small difference at the start can lead to completely different trajectories.

First, we consider the master and slave systems~\eqref{eq:ms}--\eqref{eq:ss} without control, that is,
\[
C(x_1,x_2)=0 \quad (\delta=0).
\]
For the initial conditions $x_1(0)=0.01$ and $x_2(0)=0.015$, the trajectories diverge over time, as shown in Figure (\ref{4_5_5}). This confirms the chaotic and unsynchronized behavior of the systems.

Next, the control input $C(x_1,x_2)$ is applied with $\delta>0$. Even with a much larger initial mismatch, namely $x_1(0)=0.01$ and $x_2(0)=1$, the trajectories quickly converge (Figure \ref{4_5_6}). This shows that the proposed control law is effective in achieving synchronization, as the synchronization error approaches zero despite large initial differences.

\begin{figure}[H]
	\centering
	\begin{subfigure}[b]{0.47\textwidth}
		\centering
		\includegraphics[width=\linewidth]{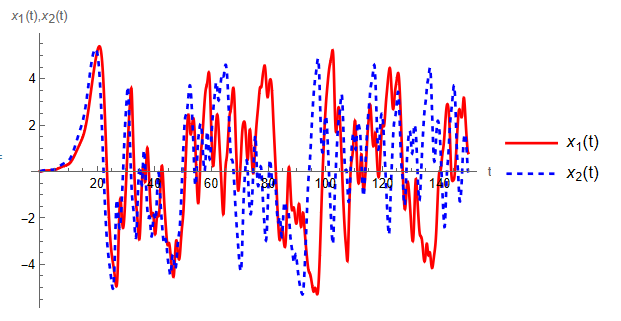}
		\caption{Uncontrolled system with $k=1.4,\;\gamma=0.3,\\
			\;\delta=0,
			\;\tau_1=6.2,\;\tau_2=4.5$}
	\label{4_5_5}
	\end{subfigure}\hfill
	\begin{subfigure}[b]{0.47\textwidth}
		\centering
		\includegraphics[width=\linewidth]{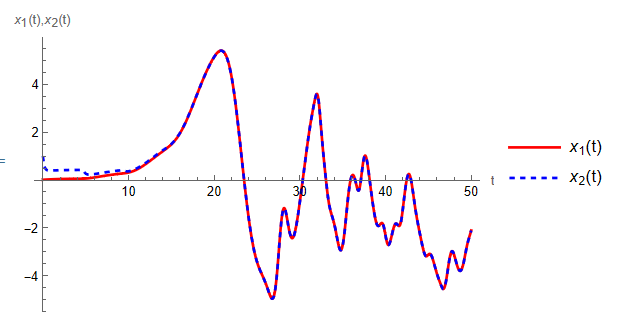}
		\caption{Controlled system with $k=1.4,\;\gamma=0.3,\;\\
			\delta=6.2,\;\tau_1=6.2,\;\tau_2=4.5$}
	\label{4_5_6}
	\end{subfigure}

\caption{Numerical demonstration of synchronization for the parameter set 
	$k=1.4$, $\gamma=0.3$, $\tau_1=6.2$, and $\tau_2=4.5$. 
	(a) In the absence of control ($\delta=0$), the master and slave trajectories 
	diverge due to sensitivity to initial conditions. 
	(b) With the proposed control ($\delta=6.2$), synchronization is achieved even 
	for a large initial mismatch, and the synchronization error converges to zero, 
	satisfying the condition $\gamma+\delta>2|k|$.}

\end{figure}

\section{Conclusion}
\label{sec:conc}

This work extends the analysis of the two-delay model \ref{eq:main} from equilibrium stability to nonlinear dynamics, chaos control, and synchronization for the nonlinear feedback $g(x)=k\sin x$. The stability analysis provides delay-independent stability and instability conditions, while the stability switching analysis identifies the critical delays at which the equilibrium changes its stability.

The numerical study reveals a rich variety of dynamical behaviors, including periodic oscillations, periodic windows, and single-scroll as well as double-scroll chaotic attractors. These dynamics are confirmed using bifurcation diagrams, largest Lyapunov exponents, Poincar\'e maps, and power spectral analysis. A parameter sensitivity analysis further shows that the system dynamics are highly dependent on the model parameters.

A linear state-feedback controller is proposed to suppress chaotic oscillations, and computable critical feedback gains are obtained from the controlled characteristic equation. The paper also develops a master--slave synchronization scheme and derives delay-independent sufficient conditions for complete synchronization based on the global Lipschitz property of the sine function and a Halanay-type comparison argument .

Overall, the results demonstrate that the coupled-delay structure gives rise to rich nonlinear dynamics and that these dynamics can be effectively analyzed, controlled, and synchronized.

	\section*{Acknowledgment}	
Pragati Dutta thanks the University of Hyderabad for the non-net fellowship (UH/DSW/F\&/Non-NET/2023/165).

\section*{Declarations}
The authors declare that they have no known competing financial interests or personal relationships that could have appeared to influence the work reported in this paper.\\
Generative AI tools were used only to improve the language and 
clarity of this manuscript. The authors carefully reviewed and 
edited the output and take full responsibility for the final content.

\bibliographystyle{unsrt}	
\bibliography{reff}

@article{krawiec2026lambert,
  title={Lambert W Function in Solving Delay Differential Equations for Modeling in Economics and Finance.},
  author={Krawiec, Adam and Matsumoto, Akio and Po{\l}e{\'c}, Anna},
  journal={Nonlinear Dynamics, Psychology \& Life Sciences},
  volume={30},
  number={1},
  pages={82},
  year={2026}
}

@article{berezowski2026influence,
  title={The influence of delay time on the generation of chaotic oscillations in a two-dimensional resonant system},
  author={Berezowski, Marek},
  journal={Chaos, Solitons \& Fractals},
  volume={208},
  pages={118158},
  year={2026},
  publisher={Elsevier}
}

@article{lin2025hopf,
  title={Hopf bifurcation and controller design for a predator--prey model with double delays},
  author={Lin, Jinting and Xu, Changjin and Zhao, Yingyan and Deng, Qinwen},
  journal={AIP Advances},
  volume={15},
  number={7},
  year={2025},
  publisher={AIP Publishing}
}

@article{boullu2020stability,
  title={Stability analysis of an equation with two delays and application to the production of platelets},
  author={Boullu, Lois and Pujo-Menjouet, Laurent and B{\'e}lair, Jacques},
  journal={Discrete and Continuous Dynamical Systems-Series S},
  volume={13},
  number={11},
  pages={3005--3027},
  year={2020}
}

@article{belair1994stability,
	title={Stability and bifurcations of equilibria in a multiple-delayed differential equation},
	author={B{\'e}lair, Jacques and Campbell, Sue Ann},
	journal={SIAM Journal on Applied Mathematics},
	volume={54},
	number={5},
	pages={1402--1424},
	year={1994},
	publisher={SIAM}
}

@article{bhalekar2016stability,
  title={Stability and bifurcation analysis of a generalized scalar delay differential equation},
  author={Bhalekar, Sachin},
  journal={Chaos: An Interdisciplinary Journal of Nonlinear Science},
  volume={26},
  number={8},
  year={2016},
  publisher={AIP Publishing}
}

@article{bhalekar2025analysis,
  title={Analysis of a class of two-delay fractional differential equation},
  author={Bhalekar, Sachin and Dutta, Pragati},
  journal={Chaos: An Interdisciplinary Journal of Nonlinear Science},
  volume={35},
  number={1},
  year={2025},
  publisher={AIP Publishing}
}

@book{erneux2009applied,
  title={Applied delay differential equations},
  author={Erneux, Thomas},
  year={2009},
  publisher={Springer}
}

@article{li1999stability,
  title={Stability and bifurcation in delay--differential equations with two delays},
  author={Li, Xiangao and Ruan, Shigui and Wei, Junjie},
  journal={Journal of Mathematical Analysis and Applications},
  volume={236},
  number={2},
  pages={254--280},
  year={1999},
  publisher={Elsevier}
}

@article{bhalekar2010synchronization,
  title={Synchronization of different fractional order chaotic systems using active control},
  author={Bhalekar, Sachin and Daftardar-Gejji, Varsha},
  journal={Communications in Nonlinear Science and Numerical Simulation},
  volume={15},
  number={11},
  pages={3536--3546},
  year={2010},
  publisher={Elsevier}
}

@article{bhalekar2012dynamical,
  title={Dynamical analysis of fractional order U{\c{c}}ar prototype delayed system},
  author={Bhalekar, Sachin},
  journal={Signal, Image and Video Processing},
  volume={6},
  number={3},
  pages={513--519},
  year={2012},
  publisher={Springer}
}

@book{lakshmanan2011dynamics,
  title={Dynamics of nonlinear time-delay systems},
  author={Lakshmanan, Muthusamy and Senthilkumar, Dharmapuri Vijayan},
  year={2011},
  publisher={Springer Science \& Business Media}
}

@article{sprott2007simple,
  title={A simple chaotic delay differential equation},
  author={Sprott, JC},
  journal={Physics Letters A},
  volume={366},
  number={4-5},
  pages={397--402},
  year={2007},
  publisher={Elsevier}
}

@article{tian1998adaptive,
  title={Adaptive control of chaotic continuous-time systems with delay},
  author={Tian, Yu-Chu and Gao, Furong},
  journal={Physica D: Nonlinear Phenomena},
  volume={117},
  number={1--4},
  pages={1--12},
  year={1998},
  doi={10.1016/S0167-2789(96)00319-3}
}

@article{rezaie2011adaptive,
  title={An adaptive delayed feedback control method for stabilizing chaotic time-delayed systems},
  author={Rezaie, Behrooz and Jahed Motlagh, Mohammad-Reza},
  journal={Nonlinear Dynamics},
  volume={64},
  pages={167--176},
  year={2011},
  doi={10.1007/s11071-010-9855-7}
}

@article{vasegh2009sliding,
  title={Chaos control in delayed chaotic systems via sliding mode based delayed feedback},
  author={Vasegh, Nastaran and Khaki Sedigh, Ali},
  journal={Chaos, Solitons \& Fractals},
  volume={40},
  number={1},
  pages={159--165},
  year={2009},
  doi={10.1016/j.chaos.2007.07.053}
}

@article{blakely2004highspeed,
  title={High-speed chaos in an optical feedback system with flexible timescales},
  author={Blakely, Jonathan N. and Illing, Lucas and Gauthier, Daniel J.},
  journal={IEEE Journal of Quantum Electronics},
  volume={40},
  number={3},
  pages={299--305},
  year={2004},
  doi={10.1109/JQE.2003.823021}
}

@article{cui2009synchronization,
  title={Synchronization of chaotic recurrent neural networks with time-varying delays using nonlinear feedback control},
  author={Cui, Baotong and Lou, Xuyang},
  journal={Chaos, Solitons \& Fractals},
  volume={39},
  number={1},
  pages={288--294},
  year={2009},
  doi={10.1016/j.chaos.2007.01.100}
}

@article{argyris2005chaos,
  title={Chaos-based communications at high bit rates using commercial fibre-optic links},
  author={Argyris, Apostolos and Syvridis, Dimitris and Larger, Laurent and Annovazzi-Lodi, Valerio and Colet, Pere and Fischer, Ingo and Garc\'{i}a-Ojalvo, Jordi and Mirasso, Claudio R. and Pesquera, Luis and Shore, K. Alan},
  journal={Nature},
  volume={438},
  number={7066},
  pages={343--346},
  year={2005},
  doi={10.1038/nature04275}
}

@article{belair1995age,
  title={Age-structured and two-delay models for erythropoiesis},
  author={B{\'e}lair, Jacques and Mackey, Michael C and Mahaffy, Joseph M},
  journal={Mathematical biosciences},
  volume={128},
  number={1-2},
  pages={317--346},
  year={1995},
  publisher={Elsevier}
}

@book{niculescu2002delay,
  title={Delay effects on stability: a robust control approach},
  author={Niculescu, Silviu-Iulian},
  year={2002},
  publisher={Springer}
}

@article{dutta2025some,
  title={Some stability results for the fractional differential equations with two delays},
  author={Dutta, Pragati and Bhalekar, Sachin},
  journal={arXiv preprint arXiv:2509.21937},
  year={2025}
}

@article{kodba2005detecting,
  title={Detecting chaos from a time series},
  author={Kodba, Stane and Perc, Matja{\v{z}} and Marhl, Marko},
  journal={European journal of physics},
  volume={26},
  number={1},
  pages={205--215},
  year={2005}
}

@book{hale1993introduction,
  title={Introduction to Functional Differential Equations},
  author={Hale, Jack K. and Verduyn Lunel, Sjoerd M.},
  year={1993},
  publisher={Springer}
}

@book{diekmann2012delay,
  title={Delay equations: functional-, complex-, and nonlinear analysis},
  author={Diekmann, Odo and Van Gils, Stephan A and Lunel, Sjoerd MV and Walther, Hans-Otto},
  volume={110},
  year={2012},
  publisher={Springer Science \& Business Media}
}

@article{mackey1977oscillation,
  title={Oscillation and chaos in physiological control systems},
  author={Mackey, Michael C and Glass, Leon},
  journal={Science},
  volume={197},
  number={4300},
  pages={287--289},
  year={1977},
  publisher={American Association for the Advancement of Science}
}

@book{gopalsamy2013stability,
  title={Stability and oscillations in delay differential equations of population dynamics},
  author={Gopalsamy, Kondalsamy},
  volume={74},
  year={2013},
  publisher={Springer Science \& Business Media}
}

@book{strogatz2001nonlinear,
  title={Nonlinear dynamics and chaos: with applications to physics, biology, chemistry, and engineering (studies in nonlinearity)},
  author={Strogatz, Steven H},
  volume={1},
  year={2001},
  publisher={Westview press}
}

@article{farmer1982chaotic,
  title={Chaotic attractors of an infinite-dimensional dynamical system},
  author={Farmer, J Doyne},
  journal={Physica D: Nonlinear Phenomena},
  volume={4},
  number={3},
  pages={366--393},
  year={1982},
  publisher={Elsevier}
}

@article{pyragas1992continuous,
  title={Continuous control of chaos by self-controlling feedback},
  author={Pyragas, Kestutis},
  journal={Physics letters A},
  volume={170},
  number={6},
  pages={421--428},
  year={1992},
  publisher={Elsevier}
}

@article{babloyantz1995control,
  title={Control of chaos in delay differential equations, in a network of oscillators and in model cortex},
  author={Babloyantz, Agnessa and Lourenco, C and Sepulchre, JA},
  journal={Physica D: Nonlinear Phenomena},
  volume={86},
  number={1-2},
  pages={274--283},
  year={1995},
  publisher={Elsevier}
}

@article{mensour1998chaos,
  title={Chaos control in multistable delay-differential equations and their singular limit maps},
  author={Mensour, Boualem and Longtin, Andr{\'e}},
  journal={Physical Review E},
  volume={58},
  number={1},
  pages={410},
  year={1998},
  publisher={APS}
}

@book{michiels2007stability,
  title={Stability and stabilization of time-delay systems: an eigenvalue-based approach},
  author={Michiels, Wim and Niculescu, Silviu-Iulian},
  year={2007},
  publisher={SIAM}
}

@article{lu2006generating,
  title={Generating multiscroll chaotic attractors: theories, methods and applications},
  author={L{\"u}, Jinhu and Chen, Guanrong},
  journal={International Journal of Bifurcation and chaos},
  volume={16},
  number={04},
  pages={775--858},
  year={2006},
  publisher={World Scientific}
}

@article{wolf1985determining,
  title={Determining Lyapunov exponents from a time series},
  author={Wolf, Alan and Swift, Jack B and Swinney, Harry L and Vastano, John A},
  journal={Physica D: nonlinear phenomena},
  volume={16},
  number={3},
  pages={285--317},
  year={1985},
  publisher={Elsevier}
}

@inproceedings{bhalekar2011new,
  title={A new chaotic dynamical system and its synchronization},
  author={Bhalekar, Sachin and Daftardar-Gejji, Varsha},
  booktitle={Proceedings of the international conference on mathematical sciences in honor of Prof. AM Mathai},
  pages={3--5},
  year={2011}
}

@article{pecora1990synchronization,
  title={Synchronization in chaotic systems},
  author={Pecora, Louis M and Carroll, Thomas L},
  journal={Physical review letters},
  volume={64},
  number={8},
  pages={821},
  year={1990},
  publisher={APS}
}

@article{kocarev1995general,
  title={General approach for chaotic synchronization with applications to communication},
  author={Kocarev, Ljupco and Parlitz, Ulrich},
  journal={Physical review letters},
  volume={74},
  number={25},
  pages={5028},
  year={1995},
  publisher={APS}
}

@article{mensour1998synchronization,
  title={Synchronization of delay-differential equations with application to private communication},
  author={Mensour, Boualem and Longtin, Andr{\'e}},
  journal={Physics Letters A},
  volume={244},
  number={1-3},
  pages={59--70},
  year={1998},
  publisher={Elsevier}
}

@article{pyragas1998synchronization,
  title={Synchronization of coupled time-delay systems: Analytical estimations},
  author={Pyragas, Kestutis},
  journal={Physical Review E},
  volume={58},
  number={3},
  pages={3067},
  year={1998},
  publisher={APS}
}

@article{michiels2009synchronization,
  title={Synchronization of delay-coupled nonlinear oscillators: An approach based on the stability analysis of synchronized equilibria},
  author={Michiels, Wim and Nijmeijer, Henk},
  journal={Chaos: An interdisciplinary journal of nonlinear science},
  volume={19},
  number={3},
  year={2009},
  publisher={AIP Publishing}
}

@article{boccaletti2002synchronization,
  title={The synchronization of chaotic systems},
  author={Boccaletti, Stefano and Kurths, J{\"u}rgen and Osipov, Grigory and Valladares, DL and Zhou, CS},
  journal={Physics reports},
  volume={366},
  number={1-2},
  pages={1--101},
  year={2002},
  publisher={Elsevier}
}

@article{landsman2007complete,
  title={Complete chaotic synchronization in mutually coupled time-delay systems},
  author={Landsman, Alexandra S and Schwartz, Ira B},
  journal={Physical Review E—Statistical, Nonlinear, and Soft Matter Physics},
  volume={75},
  number={2},
  pages={026201},
  year={2007},
  publisher={APS}
}

@article{klein2004synchronization,
  title={Synchronization of neural networks by mutual learning and its application to cryptography},
  author={Klein, Einat and Mislovaty, Rachel and Kanter, Ido and Ruttor, Andreas and Kinzel, Wolfgang},
  journal={Advances in Neural Information Processing Systems},
  volume={17},
  year={2004}
}

@article{motter2013spontaneous,
  title={Spontaneous synchrony in power-grid networks},
  author={Motter, Adilson E and Myers, Seth A and Anghel, Marian and Nishikawa, Takashi},
  journal={Nature Physics},
  volume={9},
  number={3},
  pages={191--197},
  year={2013},
  publisher={Nature Publishing Group UK London}
}

@article{rulkov1995generalized,
  title={Generalized synchronization of chaos in directionally coupled chaotic systems},
  author={Rulkov, Nikolai F and Sushchik, Mikhail M and Tsimring, Lev S and Abarbanel, Henry DI},
  journal={Physical Review E},
  volume={51},
  number={2},
  pages={980},
  year={1995},
  publisher={APS}
}

@article{grossman1982discrete,
  title={Discrete delay, distributed delay and stability switches},
  author={Grossman, Zvi},
  journal={Journal of mathematical analysis and applications},
  year={1982}
}

@article{beretta2002geometric,
  title={Geometric stability switch criteria in delay differential systems with delay dependent parameters},
  author={Beretta, Edoardo and Kuang, Yang},
  journal={SIAM Journal on Mathematical Analysis},
  volume={33},
  number={5},
  pages={1144--1165},
  year={2002},
  publisher={SIAM}
}

@article{bhalekar2024stability,
  title={Stability and bifurcation analysis of two-term fractional differential equation with delay},
  author={Bhalekar, Sachin and Gupta, Deepa},
  journal={arXiv preprint arXiv:2404.01824},
  year={2024}
}

@article{shahriari2023ordinal,
  title={Ordinal Poincar{\'e} sections: Reconstructing the first return map from an ordinal segmentation of time series},
  author={Shahriari, Zahra and Algar, Shannon D and Walker, David M and Small, Michael},
  journal={Chaos: An Interdisciplinary Journal of Nonlinear Science},
  volume={33},
  number={5},
  year={2023},
  publisher={AIP Publishing}
}

@article{yan2024construction,
  title={Construction and implementation of wide range parameter switchable chaotic system},
  author={Yan, Minxiu and Liu, Xindi and Jie, Jingfeng and Hong, Yue},
  journal={Scientific Reports},
  volume={14},
  number={1},
  pages={4059},
  year={2024},
  publisher={Nature Publishing Group UK London}
}

@article{thamilmaran2024extreme,
  title={Extreme events in a damped Korteweg--de Vries (KdV) autonomous system: A comprehensive analysis},
  author={Thamilmaran, K and Bhagyaraj, T and Sabarathinam, S},
  journal={Chaos, Solitons \& Fractals},
  volume={186},
  pages={115337},
  year={2024},
  publisher={Elsevier}
}

@article{mensour1998power,
  title={Power spectra and dynamical invariants for delay-differential and difference equations},
  author={Mensour, Boualem and Longtin, Andr{\'e}},
  journal={Physica D: Nonlinear Phenomena},
  volume={113},
  number={1},
  pages={1--25},
  year={1998},
  publisher={Elsevier}
}

@article{burykin2014dynamical,
  title={Dynamical density delay maps: simple, new method for visualising the behaviour of complex systems},
  author={Burykin, Anton and Costa, Madalena D and Citi, Luca and Goldberger, Ary L},
  journal={BMC Medical Informatics and Decision Making},
  volume={14},
  number={1},
  pages={6},
  year={2014},
  publisher={Springer}
}

\end{document}